\journal{}
\theoremstyle{plain}
  \newtheorem{thm}{Theorem}[subsection]
  \newtheorem{lem}[thm]{Lemma}
  \newtheorem{prop}[thm]{Proposition}
  \newtheorem{cor}[thm]{Corollary}
\theoremstyle{definition}
  \newtheorem{defn}[thm]{Definition}
  \newtheorem{exmp}[thm]{Example}
  \newtheorem{rem}[thm]{Remark}
\DeclareMathAlphabet{\mathcal}{OMS}{cmsy}{m}{n}
\DeclareMathOperator{\id}{id}
\DeclareMathOperator{\ob}{ob}
\def\ps@pprintTitle{%
 \let\@oddhead\@empty
  \let\@evenhead\@empty
  \def\@oddfoot{\vbox{\hsize=\textwidth\footnotesize
  \vskip 8pt
  \copyright 2015. This manuscript version is made available under the CC-BY-NC-ND 4.0 license \url{http://creativecommons.org/licenses/by-nc-nd/4.0/}. The published version is available at \url{http://dx.doi.org/10.1016/j.fss.2015.12.019}.\\
  }}%
  \let\@evenfoot\@oddfoot}
\def\oto{{\bfig\morphism<180,0>[\mkern-4mu`\mkern-4mu;]\place(86,0)[\circ]\efig}}
\renewcommand{\phi}{\varphi}
\newcommand{\da}{\downarrow}
\newcommand{\ua}{\uparrow}
\newcommand{\ra}{\rightarrow}
\newcommand{\la}{\leftarrow}
\newcommand{\nra}{{\rightarrow\hspace*{-1ex}{\mapstochar}\hspace*{1.1ex}}}
\newcommand{\nla}{{\leftarrow\hspace*{-0.8ex}{\mapstochar}\hspace*{1ex}}}
\newcommand{\lra}{\longrightarrow}
\newcommand{\lda}{\swarrow}
\newcommand{\rda}{\searrow}
\newcommand{\bs}{\backslash}
\newcommand{\Lra}{\Longrightarrow}
\newcommand{\rat}{\!\rightarrowtail\!}
\newcommand{\bv}{\bigvee}
\newcommand{\bw}{\bigwedge}
\newcommand{\dv}{\dashv}
\newcommand{\nat}{\natural}
\newcommand{\al}{\alpha}
\newcommand{\be}{\beta}
\newcommand{\lam}{\lambda}
\newcommand{\CB}{\mathcal{B}}
\newcommand{\CC}{\mathcal{C}}
\newcommand{\CD}{\mathcal{D}}
\newcommand{\CE}{\mathcal{E}}
\newcommand{\CJ}{\mathcal{J}}
\newcommand{\CQ}{\mathcal{Q}}
\newcommand{\sC}{{\sf C}}
\newcommand{\sD}{{\sf D}}
\newcommand{\sF}{{\sf F}}
\newcommand{\sI}{{\sf I}}
\newcommand{\sP}{{\sf P}}
\newcommand{\sS}{{\sf S}}
\newcommand{\sU}{{\sf U}}
\newcommand{\sj}{{\sf j}}
\newcommand{\sy}{{\sf y}}
\newcommand{\sPd}{{\sP^{\dag}}}
\newcommand{\cl}{{\sf cl}}
\newcommand{\Cd}{\hat{\sC}}
\newcommand{\Ccl}{\Cd_{\cl}}
\newcommand{\Id}{\hat{\sI}}
\newcommand{\Icl}{\Id_{\cl}}
\newcommand{\bbA}{\mathbb{A}}
\newcommand{\bbX}{\mathbb{X}}
\newcommand{\bbY}{\mathbb{Y}}
\newcommand{\bbZ}{\mathbb{Z}}
\newcommand{\Cat}{{\bf Cat}}
\newcommand{\CatCls}{{\bf CatCls}}
\newcommand{\Cls}{{\bf Cls}}
\newcommand{\ClsDist}{{\bf ClsDist}}
\newcommand{\ClsRel}{{\bf ClsRel}}
\newcommand{\ClsCloRel}{{\bf ClsRel}_{\cl}}
\newcommand{\Dist}{{\bf Dist}}
\newcommand{\Rel}{{\bf Rel}}
\newcommand{\Set}{{\bf Set}}
\newcommand{\Sup}{{\bf Sup}}
\newcommand{\QCat}{\CQ\text{-}\Cat}
\newcommand{\QDist}{\CQ\text{-}\Dist}
\newcommand{\QCatCls}{\CQ\text{-}\CatCls}
\newcommand{\QCls}{\CQ\text{-}\Cls}
\newcommand{\QClsDist}{\CQ\text{-}\ClsDist}
\newcommand{\QClsCloDist}{(\QClsDist)_{\cl}}
\newcommand{\QClsRel}{\CQ\text{-}\ClsRel}
\newcommand{\QClsCloRel}{(\QClsRel)_{\cl}}
\newcommand{\QRel}{\CQ\text{-}\Rel}
\newcommand{\QSup}{\CQ\text{-}\Sup}
\newcommand{\co}{{\rm co}}
\newcommand{\op}{{\rm op}}
\newcommand{\tphi}{\widetilde{\phi}}
\newcommand{\tzeta}{\widetilde{\zeta}}
\newcommand{\teta}{\widetilde{\eta}}
\newcommand{\tc}{\widetilde{c}}
\newcommand{\oc}{\overline{c}}
\newcommand{\od}{\overline{d}}
\newcommand{\td}{\widetilde{d}}
\newcommand{\tf}{\widetilde{f}}
\newcommand{\PA}{\sP\bbA}
\newcommand{\PX}{\sP\bbX}
\newcommand{\PY}{\sP\bbY}
\newcommand{\PZ}{\sP\bbZ}
\newcommand{\PdX}{\sPd\bbX}
\newcommand{\PdY}{\sPd\bbY}
\newcommand{\PdA}{\sPd\bbA}
\newcommand{\syd}{\sy^{\dag}}
\newcommand{\DQ}{\CD Q}
\newcommand{\ftra}{f^{\triangleright}}
\newcommand{\ftla}{f^{\triangleleft}}
\newcommand{\gtra}{g^{\triangleright}}
\newcommand{\zetra}{\zeta^{\triangleright}}
\newcommand{\zetla}{\zeta^{\triangleleft}}
\newcommand{\etra}{\eta^{\triangleright}}
\numberwithin{equation}{section}
\begin{document}

\begin{frontmatter}



\title{$\mathcal{Q}$-closure spaces}


\author{Lili Shen}
\ead{shenlili@yorku.ca}

\address{Department of Mathematics and Statistics, York University, Toronto, Ontario M3J 1P3, Canada}

\begin{abstract}
For a small quantaloid $\mathcal{Q}$, a $\mathcal{Q}$-closure space is a small category enriched in $\mathcal{Q}$ equipped with a closure operator on its presheaf category. We investigate $\mathcal{Q}$-closure spaces systematically with specific attention paid to their morphisms and, as preordered fuzzy sets are a special kind of quantaloid-enriched categories, in particular fuzzy closure spaces on fuzzy sets are introduced as an example. By constructing continuous relations that naturally generalize continuous maps, it is shown (in the generality of the $\mathcal{Q}$-version) that the category of closure spaces and closed continuous relations is equivalent to the category of complete lattices and $\sup$-preserving maps.
\end{abstract}

\begin{keyword}
Quantaloid \sep $\mathcal{Q}$-closure space \sep Continuous $\mathcal{Q}$-functor \sep Continuous $\mathcal{Q}$-distributor \sep Continuous $\mathcal{Q}$-relation \sep Complete $\mathcal{Q}$-category \sep Fuzzy closure space \sep Fuzzy powerset

\MSC[2010] 54A05 \sep 03E72 \sep 18D20
\end{keyword}

\end{frontmatter}

\tableofcontents


\section{Introduction}

A closure space consists of a (crisp) set $X$ and a closure operator $c$ on the powerset of $X$; that is, a monotone map $c:{\bf 2}^X\lra{\bf 2}^X$ with respect to the inclusion order of subsets such that $A\subseteq c(A)$, $cc(A)=c(A)$ for all $A\subseteq X$. However, $c$ may not satisfy $c(\varnothing)=\varnothing$, $c(A)\cup c(B)=c(A\cup B)$ for all $A,B\subseteq X$ that are necessary to make itself a \emph{topological} closure operator. The category $\Cls$ has closure spaces as objects and continuous maps as morphisms, where a map $f:(Y,d)\lra(X,c)$ between closure spaces is continuous if
$$f^{\ra}d(B)\subseteq cf^{\ra}(B)$$
for all $B\subseteq Y$. It is not difficult to observe that the continuity of the map $f$ is completely determined by its cograph
$$f^{\nat}:X\oto Y,\quad f^{\nat}=\{(x,y)\in X\times Y\mid x=fy\}$$
which must satisfy
$$(f^{\nat})^*d(B)\subseteq c(f^{\nat})^*(B)$$
for all $B\subseteq Y$; the notion of \emph{continuous relations} then comes out naturally by replacing the cograph $f^{\nat}$ with a general relation $\zeta:X\oto Y$ (i.e., $\zeta\subseteq X\times Y$) satisfying
$$\zeta^*d(B)\subseteq c\zeta^*(B)$$
for all $B\subseteq Y$, where $\zeta^*$ is part of the \emph{Kan adjunction} $\zeta^*\dv\zeta_*$ induced by $\zeta$ \cite{Shen2013a}:
$$\zeta^*(B)=\{x\in X\mid \exists y\in B:\ (x,y)\in\zeta\}.$$
The category $\ClsRel$ of closure spaces and continuous relations admits a natural quotient category $\ClsCloRel$ of closure spaces and \emph{closed} continuous relations, where a continuous relation $\zeta:(X,c)\oto(Y,d)$ is \emph{closed} if
$$\tzeta y:=\{x\in X\mid (x,y)\in\zeta\}$$
is a closed subset of $(X,c)$ for all $y\in Y$. It will be shown that $\ClsCloRel$ is equivalent to the category $\Sup$ of complete lattices and $\sup$-preserving maps (Corollary \ref{ClsCloRel_Sup_equivalent}):
\begin{equation} \label{ClsCloRel_equiv_Sup}
\ClsCloRel\simeq\Sup.
\end{equation}

The construction stated above will be explored in a much more general setting in this paper for $\CQ$-closure spaces, where $\CQ$ is a small \emph{quantaloid}. The theory of quantaloid-enriched categories was initiated by Walters \cite{Walters1981}, established by Rosenthal \cite{Rosenthal1996} and mainly developed in Stubbe's works \cite{Stubbe2005,Stubbe2006}. Based on the fruitful results of quantaloid-enriched categories, recent works of H{\"o}hle-Kubiak \cite{Hohle2011} and Pu-Zhang \cite{Pu2012} have established the theory of \emph{preordered fuzzy sets} through categories enriched in a quantaloid $\DQ$ induced by a divisible unital quantale $Q$. The survey paper \cite{Stubbe2014} is particularly recommended as an overview of this theory for the readership of fuzzy logicians and fuzzy set theorists.

Given a small quantaloid $\CQ$, a $\CQ$-closure space \cite{Shen2013a} is a small $\CQ$-category (i.e., a small category enriched in $\CQ$) $\bbX$ equipped with a $\CQ$-closure operator $c:\PX\lra\PX$ on the presheaf $\CQ$-category of $\bbX$. Before presenting a general form of the categorical equivalence (\ref{ClsCloRel_equiv_Sup}), we investigate $\CQ$-closure spaces systematically with specific attention paid to their morphisms: continuous $\CQ$-functors, continuous $\CQ$-distributors and finally, closed continuous $\CQ$-distributors.

Without assuming a high level of expertise by the readers on quantaloids, we recall in Section \ref{QCat} the basic notions and techniques of quantaloid-enriched categories that will be employed later. Next, Section \ref{Qclosure_space_continuous_functor} is devoted to the study of the category $\QCatCls$ of $\CQ$-closure spaces and continuous $\CQ$-functors. Explicitly, a $\CQ$-functor $f:(\bbX,c)\lra(\bbY,d)$ between $\CQ$-closure spaces is \emph{continuous} if
$$f^{\ra}c\leq df^{\ra}:\PX\lra\PY$$
with respect to the pointwise underlying preorder of $\CQ$-categories. We also derive a conceptual definition of the specialization (pre)order in a general setting as specialization $\CQ$-categories, which has the potential to go far beyond its use in this paper (see Remark \ref{distributor_specialization}).

The main result of this paper is presented in Section \ref{Continuous_distributor}, where we extend continuous $\CQ$-functors to \emph{continuous $\CQ$-distributors} as morphisms of $\CQ$-closure spaces; that is, $\CQ$-distributors $\zeta:(\bbX,c)\oto(\bbY,d)$ between $\CQ$-closure spaces with
$$\zeta^*d\leq c\zeta^*:\PY\lra\PX.$$
The resulting category, $\QClsDist$, admits a quotient category $\QClsCloDist$ of $\CQ$-closure spaces and \emph{closed} continuous $\CQ$-distributors, where a continuous $\CQ$-distributor $\zeta:(\bbX,c)\oto(\bbY,d)$ is \emph{closed} if its transpose
$$\tzeta:\bbY\lra\PX$$
sends every object $y$ of $\bbY$ to a closed presheaf of $(\bbX,c)$. Although the assignment
\begin{equation} \label{Xc_cPX}
(\bbX,c)\mapsto\sC(\bbX,c)
\end{equation}
sending a $\CQ$-closure space $(\bbX,c)$ to the complete $\CQ$-category $\sC(\bbX,c)$ of closed presheaves only yields a left adjoint functor from $\QCatCls$ to the category $\QSup$ of complete $\CQ$-categories and $\sup$-preserving $\CQ$-functors (Theorem \ref{C_I_adjoint}), a little surprisingly, the same assignment (\ref{Xc_cPX}) on objects gives rise to an equivalence of categories (Theorem \ref{Ccl_Icl_equivalence})
\begin{equation} \label{QCatClsCloDist_equiv_QSup}
\QClsCloDist^{\op}\simeq\QSup,
\end{equation}
which reduces to the equivalence (\ref{ClsCloRel_equiv_Sup}) when $\CQ={\bf 2}$.

As an example of $\CQ$-closure spaces, the topic of Section \ref{Example_Fuzzy_closure_spaces} will be \emph{fuzzy closure spaces} defined on fuzzy sets. Once a divisible unital quantale $Q$ is chosen as the truth table for fuzzy sets, $\DQ$-categories precisely describe preordered fuzzy sets \cite{Hohle2011,Pu2012}, and fuzzy powersets of fuzzy sets are exactly $\DQ$-categories of presheaves on discrete $\DQ$-categories \cite{Shen2013b}; consequently, $\DQ$-closure spaces with discrete underlying $\DQ$-categories naturally characterize fuzzy closure spaces whose underlying sets are fuzzy sets. It should be noted that fuzzy closure spaces on \emph{crisp} sets discussed in \cite{Ghanim1989,Mashhour1985}, i.e., \emph{crisp} sets $X$ equipped with closure operators on $L^X$ \cite{Bvelohlavek2001a,Lai2009,Shen2013} for a lattice $L$, are different from what we introduce here: we are concerned with \emph{fuzzy} sets (instead of crisp sets) equipped with closure operators on their \emph{fuzzy powersets}. In fact, fuzzy topological spaces in most of the existing theories (see \cite{Chang1968,Hoehle1995,Hohle1999,Liu1998,Zhang2007} for instance) are defined as \emph{crisp} sets equipped with certain kinds of fuzzy topological structures, and the study of $\DQ$-closure spaces presents the first step towards the study of fuzzy topologies on fuzzy sets, which are also expected to be given by \emph{fuzzy} sets equipped with topological structures on their \emph{fuzzy powersets}.

\section{Quantaloid-enriched categories} \label{QCat}

As preparations for our discussion, we recall the basic concepts of quantaloid-enriched categories \cite{Heymans2010,Rosenthal1996,Shen2014,Stubbe2005,Stubbe2006,Stubbe2014} in this section and fix the notations.

\subsection{Quantaloids and $\CQ$-categories}

A \emph{quantaloid} is a category enriched in the symmetric monoidal closed category $\Sup$. Explicitly, a quantaloid $\CQ$ is a (possibly large) category with ordered small hom-sets, such that
\begin{itemize}
\item each hom-set $\CQ(p,q)$ $(p,q\in\ob\CQ)$ is a complete lattice, and
\item the composition $\circ$ of $\CQ$-arrows preserves componentwise joins, i.e.,
$$v\circ\Big(\bv_{i\in I} u_i\Big)=\bv_{i\in I}v\circ u_i,\quad\Big(\bv_{i\in I} v_i\Big)\circ u=\bv_{i\in I}v_i\circ u$$
for all $\CQ$-arrows $u,u_i:p\lra q$ and $v,v_i:q\lra r$ $(i\in I)$.
\end{itemize}

Given $\CQ$-arrows $u:p\lra q$, $v:q\lra r$, $w:p\lra r$, the corresponding adjoints induced by the compositions
\begin{align*}
-\circ u\dv -\lda u:&\ \CQ(p,r)\lra\CQ(q,r),\\
v\circ -\dv v\rda -:&\ \CQ(p,r)\lra\CQ(p,q)
\end{align*}
satisfy
$$v\circ u\leq w\iff v\leq w\lda u\iff u\leq v\rda w,$$
where the operations $\lda$, $\rda$ are called \emph{left} and \emph{right implications} in $\CQ$, respectively.

A \emph{subquantaloid} of a quantaloid $\CQ$ is exactly a subcategory of $\CQ$ that is closed under the inherited joins of $\CQ$-arrows. A homomorphism $f:\CQ\lra\CQ'$ between quantaloids is an ordinary functor between the underlying categories that preserves joins of arrows: for all $\CQ$-arrows $u_i:p\lra q$ $(i\in I)$,
$$f\Big(\bv_{i\in I}u_i\Big)=\bv_{i\in I}fu_i.$$
A homomorphism of quantaloids is \emph{full} (resp. \emph{faithful}, an \emph{equivalence} of quantaloids) if the underlying functor is full (resp. faithful, an equivalence of underlying categories).

From now on $\CQ$ always denotes a \emph{small} quantaloid with a set $\CQ_0:=\ob\CQ$ of objects and a set $\CQ_1$ of arrows. The top and bottom $\CQ$-arrow in $\CQ(p,q)$ are respectively $\top_{p,q}$ and $\bot_{p,q}$, and the identity $\CQ$-arrow on $q\in\CQ_0$ is $1_q$.

Given a (``base'') set $T$, a set $X$ equipped with a map $|\text{-}|:X\lra T$ is called a \emph{$T$-typed set}, where the value $|x|\in T$ is the \emph{type} of $x\in X$. A map $f:X\lra Y$ between $T$-typed sets is \emph{type-preserving} if $|x|=|fx|$ for all $x\in X$. $T$-typed sets and type-preserving maps constitute the slice category $\Set\da T$.

Given a small quantaloid $\CQ$ and taking $\CQ_0$ as the set of types, a \emph{$\CQ$-relation} (also \emph{$\CQ$-matrix}) \cite{Heymans2010} $\phi:X\oto Y$ between $\CQ_0$-typed sets is given by a family of $\CQ$-arrows $\phi(x,y)\in\CQ(|x|,|y|)$ ($x\in X$, $y\in Y$). With the pointwise local order inherited from $\CQ$
$$\phi\leq\phi':X\oto Y\iff\forall x,y\in X:\ \phi(x,y)\leq\phi'(x,y),$$
the category $\QRel$ of $\CQ_0$-typed sets and $\CQ$-relations constitute a (large) quantaloid in which
\begin{align*}
&\psi\circ\phi:X\oto Z,\quad(\psi\circ\phi)(x,z)=\bv_{y\in Y}\psi(y,z)\circ\phi(x,y),\\
&\xi\lda\phi:Y\oto Z,\quad(\xi\lda\phi)(y,z)=\bw_{x\in X}\xi(x,z)\lda\phi(x,y),\\
&\psi\rda\xi:X\oto Y,\quad(\psi\rda\xi)(x,y)=\bw_{z\in Z}\psi(y,z)\rda\xi(x,z)
\end{align*}
for $\CQ$-relations $\phi:X\oto Y$, $\psi:Y\oto Z$, $\xi:X\oto Z$, and
$$\id_X:X\oto X,\quad\id_X(x,y)=\begin{cases}
1_{|x|}, & \text{if}\ x=y,\\
\bot_{|x|,|y|}, & \text{else}
\end{cases}$$
gives the identity $\CQ$-relation on $X$.

\begin{rem}
$\CQ$-relations between $\CQ_0$-typed sets may be thought of as \emph{multi-typed} and \emph{multi-valued} relations: a $\CQ$-relation $\phi:X\oto Y$ may be decomposed into a family of $\CQ(p,q)$-valued relations $\phi_{p,q}:X_p\oto Y_q$ $(p,q\in\CQ_0)$, where $\phi_{p,q}$ is the restriction of $\phi$ on $X_p$, $Y_q$ which, respectively, consist of elements in $X$, $Y$ with types $p$, $q$.
\end{rem}

A $\CQ$-relation $\phi:X\oto X$ on a $\CQ_0$-typed set $X$ is \emph{reflexive} (resp. \emph{transitive}) if $\id_X\leq\phi$ (resp. $\phi\circ\phi\leq\phi$). A (small) \emph{$\CQ$-category} $\bbX=(X,\al)$ is given by a $\CQ_0$-typed set $X$ equipped with a reflexive and transitive $\CQ$-relation $\al:X\oto X$; that is,
\begin{itemize}
\item $1_{|x|}\leq\al(x,x)$, and
\item $\al(y,z)\circ\al(x,y)\leq\al(x,z)$
\end{itemize}
for all $x,y,z\in X$. For the simplicity of notations, we denote a $\CQ$-category by $\bbX$ and write $\bbX_0:=X$, $\bbX(x,y):=\al(x,y)$ for $x,y\in\bbX_0$ when there is no confusion\footnote{We still denote a $\CQ$-category explicitly by a pair $(X,\al)$ when it is necessary to eliminate possible confusion, especially for \emph{specialization $\CQ$-categories} defined in Subsection \ref{Specialization_QCat} and \emph{preordered fuzzy sets} in Section \ref{Example_Fuzzy_closure_spaces}.}. There is a natural underlying preorder on $\bbX_0$ given by
$$x\leq y\iff|x|=|y|=q\quad\text{and}\quad 1_q\leq\bbX(x,y),$$
and we write $x\cong y$ if $x\leq y$ and $y\leq x$. A $\CQ$-category $\bbX$ is \emph{separated} (or \emph{skeletal}) if its underlying preorder is a partial order; that is, $x\cong y$ implies $x=y$ for all $x,y\in\bbX_0$.

A \emph{$\CQ$-functor} (resp. \emph{fully faithful} $\CQ$-functor) $f:\bbX\lra\bbY$ between $\CQ$-categories is a type-preserving map $f:\bbX_0\lra\bbY_0$ with $\bbX(x,x')\leq\bbY(fx,fx')$ (resp. $\bbX(x,x')=\bbY(fx,fx')$) for all $x,x'\in\bbX_0$. With the pointwise (pre)order of $\CQ$-functors given by
$$f\leq g:\bbX\lra\bbY\iff\forall x\in\bbX_0:\ fx\leq gx\iff\forall x\in\bbX_0:\ 1_{|x|}\leq\bbY(fx,gx),$$
$\CQ$-categories and $\CQ$-functors constitute a 2-category $\QCat$. Bijective fully faithful $\CQ$-functors are exactly isomorphisms in $\QCat$.

A pair of $\CQ$-functors $f:\bbX\lra\bbY$, $g:\bbY\lra\bbX$ forms an adjunction $f\dv g$ in $\QCat$ if $\bbY(fx,y)=\bbX(x,gy)$ for all $x\in\bbX_0$, $y\in\bbY_0$; or equivalently,
$1_{\bbX}\leq gf$ and $fg\leq 1_{\bbY}$, where $1_{\bbX}$ and $1_{\bbY}$ respectively denote the identity $\CQ$-functors on $\bbX$ and $\bbY$.

\begin{exmp} \phantomsection \label{QCat_exmp}
\begin{itemize}
\item[\rm (1)] For each $\CQ_0$-typed set $X$, $(X,\id_X)$ is a \emph{discrete} $\CQ$-category. In this paper, a $\CQ_0$-typed set $X$ is always assumed to be a discrete $\CQ$-category.
\item[\rm (2)] For each $q\in\CQ_0$, $\{q\}$ is a discrete $\CQ$-category with only one object $q$, in which $|q|=q$ and $\{q\}(q,q)=1_q$.
\item[\rm (3)] A $\CQ$-category $\bbA$ is a (full) \emph{$\CQ$-subcategory} of $\bbX$ if $\bbA_0\subseteq\bbX_0$ and $\bbA(x,y)=\bbX(x,y)$ for all $x,y\in\bbA_0$. 
\end{itemize}
\end{exmp}

\subsection{Presheaves on $\CQ$-categories and completeness} \label{Presheaves}

A \emph{$\CQ$-distributor} $\phi:\bbX\oto\bbY$ between $\CQ$-categories is a $\CQ$-relation $\phi:\bbX_0\oto\bbY_0$ such that
\begin{equation} \label{distributor_def}
\bbY\circ\phi\circ\bbX=\phi;
\end{equation}
or equivalently,
$$\bbY(y,y')\circ\phi(x,y)\circ\bbX(x',x)\leq\phi(x',y')$$
for all $x,x'\in\bbX_0$, $y,y'\in\bbY_0$. $\CQ$-categories and $\CQ$-distributors constitute a (large) quantaloid $\QDist$ in which compositions and implications are calculated as in $\QRel$; the identity $\CQ$-distributor on each $\CQ$-category $\bbX$ is given by the hom-arrows $\bbX:\bbX\oto\bbX$. It is obvious that $\QRel$ is a full subquantaloid of $\QDist$.

Each $\CQ$-functor $f:\bbX\lra\bbY$ induces a pair of $\CQ$-distributors given by
\begin{align*}
&f_{\nat}:\bbX\oto\bbY,\quad f_{\nat}(x,y)=\bbY(fx,y),\\
&f^{\nat}:\bbY\oto\bbX,\quad f^{\nat}(y,x)=\bbY(y,fx),
\end{align*}
called respectively the \emph{graph} and \emph{cograph} of $f$, which form an adjunction $f_{\nat}\dv f^{\nat}$ in the 2-category $\QDist$, i.e., $\bbX\leq f^{\nat}\circ f_{\nat}$ and $f_{\nat}\circ f^{\nat}\leq\bbY$. Furthermore,
$$(-)_{\nat}:\QCat\lra(\QDist)^{\co},\quad(-)^{\nat}:\QCat\lra(\QDist)^{\op}$$
are both 2-functors, where ``$\co$'' refers to reversing order in hom-sets.

%

A \emph{presheaf} with type $q$ on a $\CQ$-category $\bbX$ is a $\CQ$-distributor $\mu:\bbX\oto\{q\}$ (see Example \ref{QCat_exmp}(2) for the definition of $\{q\}$). Presheaves on $\bbX$ constitute a $\CQ$-category $\PX$ with $$\PX(\mu,\mu'):=\mu'\lda\mu=\bw_{x\in\bbX_0}\mu'(x)\lda\mu(x)$$
for all $\mu,\mu'\in\PX$. Dually, the $\CQ$-category $\PdX$ of \emph{copresheaves} on $\bbX$ consists of $\CQ$-distributors $\lam:\{q\}\oto\bbX$ as objects with type $q$ $(q\in\CQ_0)$ and
$$\PdX(\lam,\lam'):=\lam'\rda\lam=\bw_{x\in\bbX_0}\lam'(x)\rda\lam(x)$$
for all $\lam,\lam'\in\PdX$. It is easy to see that $\PdX\cong(\PX^{\op})^{\op}$, where ``$\op$'' means the dual of $\CQ$-categories as explained in the following remark:

\begin{rem} \label{QCat_dual}
Dual notions arise everywhere in the theory of $\CQ$-categories. To make this clear, it is best to first explain the \emph{dual} of a $\CQ$-category. In general, the dual of a $\CQ$-relation $\phi:X\oto Y$, written as
$$\phi^{\op}:Y\oto X,\quad \phi^{\op}(y,x)=\phi(x,y)\in\CQ(|x|,|y|)=\CQ^{\op}(|y|,|x|),$$
is not a $\CQ$-relation, but rather a $\CQ^{\op}$-relation. Correspondingly, the dual of a $\CQ$-category $\bbX$ is a $\CQ^{\op}$-category\footnote{The terminologies adopted here are not exactly the same as in the references \cite{Stubbe2005,Stubbe2006,Stubbe2014}: Our $\CQ$-categories are exactly $\CQ^{\op}$-categories in the sense of Stubbe.}, given by $\bbX^{\op}_0=\bbX_0$ and $\bbX^{\op}(x,y)=\bbX(y,x)$ for all $x,y\in\bbX_0$; a $\CQ$-functor $f:\bbX\lra\bbY$ becomes a $\CQ^{\op}$-functor $f^{\op}:\bbX^{\op}\lra\bbY^{\op}$ with the same mapping on objects but $g^{\op}\leq f^{\op}$ whenever $f\leq g:\bbX\lra\bbY$. Briefly, there is a 2-isomorphism
\begin{equation} \label{Qop_Cat_iso}
(-)^{\op}:(\QCat)^{\co}\cong\CQ^{\op}\text{-}\Cat.
\end{equation}
\end{rem}

\begin{exmp}
Given $q\in\CQ_0$, a presheaf on the one-object $\CQ$-category $\{q\}$ (see Example \ref{QCat_exmp}(2)) is exactly a $\CQ$-arrow $u:q\lra|u|$ for some $|u|\in\CQ_0$, thus $\sP\{q\}$ consists of all $\CQ$-arrows with domain $q$ as objects and
$$\sP\{q\}(u,u')=u'\lda u:p\lra p'$$
for all $\CQ$-arrows $u:q\lra p$, $u':q\lra p'$. Dually, $\sPd\{q\}$ is the $\CQ$-category of all $\CQ$-arrows with codomain $q$ and
$$\sPd\{q\}(v,v')=v'\rda v:r\lra r'$$
for all $\CQ$-arrows $v:r\lra q$, $v':r'\lra q$.
\end{exmp}

Each $\CQ$-distributor $\phi:\bbX\oto\bbY$ induces a \emph{Kan adjunction} \cite{Shen2013a} $\phi^*\dv\phi_*$ in $\QCat$ given by
\begin{align*}
&\phi^*:\PY\lra\PX,\quad \lam\mapsto\lam\circ\phi,\\
&\phi_*:\PX\lra\PY,\quad \mu\mapsto\mu\lda\phi
\end{align*}
and a \emph{dual Kan adjunction} \cite{Shen2014} $\phi_{\dag}\dv\phi^{\dag}$ given by
\begin{align*}
&\phi_{\dag}:\PdY\lra\PdX,\quad\lam\mapsto\phi\rda\lam,\\
&\phi^{\dag}:\PdX\lra\PdY,\quad\mu\mapsto\phi\circ\mu.
\end{align*}

\begin{prop} {\rm\cite{Heymans2010}} \label{star_graph_adjoint}
$(-)^*:(\QDist)^{\op}\lra\QCat$ and $(-)^{\dag}:(\QDist)^{\co}\lra\QCat$ are both 2-functorial, and one has two pairs of adjoint 2-functors
$$(-)^{\nat}\dv(-)^*:(\QDist)^{\op}\lra\QCat\quad\text{and}\quad(-)_{\nat}\dv(-)^{\dag}:(\QDist)^{\co}\lra\QCat.$$
\end{prop}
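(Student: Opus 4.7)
The plan is to handle $(-)^*$ and the adjunction $(-)^{\nat} \dv (-)^*$ in detail; the dual statements for $(-)^{\dag}$ and $(-)_{\nat} \dv (-)^{\dag}$ then follow by the 2-isomorphism $(\QCat)^{\co} \cong \CQ^{\op}\text{-}\Cat$ of Remark \ref{QCat_dual}, which converts presheaves over $\CQ$ into copresheaves over $\CQ^{\op}$ and swaps $\lda$ with $\rda$.

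First I verify 2-functoriality of $(-)^*$. For a $\CQ$-distributor $\phi: \bbX \oto \bbY$, the assignment $\phi^*: \PY \lra \PX$, $\lam \mapsto \lam \circ \phi$, is type-preserving by construction, and its $\CQ$-functoriality $\lam' \lda \lam \leq (\lam' \circ \phi) \lda (\lam \circ \phi)$ unpacks, via the adjunction $-\circ(\lam\circ\phi) \dv -\lda(\lam\circ\phi)$, to $(\lam' \lda \lam) \circ \lam \circ \phi \leq \lam' \circ \phi$, which is immediate from the counit of $-\circ\lam \dv -\lda\lam$. The contravariant composition law $(\psi \circ \phi)^* = \phi^* \circ \psi^*$ and the identity law $\bbX^* = 1_{\PX}$ reduce, respectively, to associativity of composition in $\QRel$ and to the distributor axiom (\ref{distributor_def}) applied to presheaves, while monotonicity $\phi \leq \phi' \Rightarrow \phi^* \leq \phi'^*$ is componentwise monotonicity of $\circ$.

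For the adjunction I would exhibit the natural bijection
$$\QDist^{\op}(\bbX, \bbY) = \QDist(\bbY, \bbX) \cong \QCat(\bbX, \PY),$$
sending $\phi: \bbY \oto \bbX$ to the $\CQ$-functor $\phi_\bullet: \bbX \lra \PY$ with $(\phi_\bullet x)(y) := \phi(y, x)$ (so $|\phi_\bullet x| = |x|$). Each $\phi_\bullet x$ is a presheaf because $\phi \circ \bbY \leq \phi$, and the $\CQ$-functoriality of $\phi_\bullet$—namely $\bbX(x, x') \leq \bw_y \phi(y, x') \lda \phi(y, x)$—is, by the left-implication adjunction, equivalent to $\phi(y, x) \circ \bbX(x, x') \leq \phi(y, x')$, i.e., the other half $\bbX \circ \phi \leq \phi$ of the distributor condition. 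The inverse sends $g: \bbX \lra \PY$ to $\phi^g(y, x) := (gx)(y)$, where $gx$ being a presheaf and $g$ being a $\CQ$-functor supply precisely the two halves of the distributor axiom for $\phi^g$. The correspondence is patently an order-isomorphism of the hom-orders (both sides inherit the pointwise order from $\CQ$), and naturality in both variables is a routine check against pre- and post-composition. Equivalently, one reads off the unit as the Yoneda embedding $\sy_\bbX: x \mapsto \bbX(-, x)$ and the counit at $\bbY$ as the evaluation distributor $(\sy_\bbY)_{\nat}: \bbY \oto \PY$, $(y, \mu) \mapsto \mu(y)$, with the triangle identities collapsing to a one-line Yoneda computation.

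The main obstacle is administrative rather than conceptual: one must keep straight several layers of variance—$(-)^*$ is contravariant on 1-cells (hence $(\QDist)^{\op}$), while $(-)^{\dag}$ is covariant on 1-cells but contravariant on 2-cells (the reversal coming from $1_q \leq \lam'\rda\lam \iff \lam' \leq \lam$ componentwise in $\PdX$, hence $(\QDist)^{\co}$), and the graph/cograph distinction pairs $f^{\nat}$ with $(-)^*$ but $f_{\nat}$ with $(-)^{\dag}$. Once these conventions are fixed, every inequality above reduces to one of the two standard adjunctions $-\circ u \dv -\lda u$ and $v\circ- \dv v\rda-$ in $\CQ$, combined with the distributor axiom $\bbY \circ \phi \circ \bbX = \phi$.
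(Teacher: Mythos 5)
Your proof is correct. The paper itself offers no argument for this proposition --- it is quoted from Heymans' thesis --- so there is no internal proof to compare against; what you have written is the standard verification, and it checks out: 2-functoriality of $(-)^*$ reduces to the unit/counit of $-\circ u\dv -\lda u$ together with associativity and the distributor axiom, and the adjunction is exhibited by the natural order-isomorphism $\QDist(\bbY,\bbX)\cong\QCat(\bbX,\sP\bbY)$ given by transposition (exactly the isomorphism the paper later records around Equation (\ref{tphi_def}), with the Yoneda embedding as unit and $(\sy_{\bbY})_{\nat}$ as counit). Your reduction of the $\dagger$-half to the $*$-half via $(\QCat)^{\co}\cong\CQ^{\op}\text{-}\Cat$ is legitimate, and you correctly locate the source of the ``$\co$'' in the reversed underlying order of $\sPd\bbX$.

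One small slip: in verifying that $\phi_\bullet$ is a $\CQ$-functor you write the elementwise inequality as $\phi(y,x)\circ\bbX(x,x')\leq\phi(y,x')$, which does not typecheck under the paper's composition convention ($v\circ u$ requires $\cod u=\dom v$); it should read $\bbX(x,x')\circ\phi(y,x)\leq\phi(y,x')$. Since you immediately identify it with the correct global condition $\bbX\circ\phi\leq\phi$, this is only a notational transposition, not a gap.
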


One may form several compositions out of the 2-functors in Proposition \ref{star_graph_adjoint}:
\begin{align*}
&(-)^{\ra}:=(\QCat\to^{(-)^{\nat}}(\QDist)^{\op}\to^{(-)^*}\QCat),\\
&(-)^{\la}:=((\QCat)^{\co\op}\to^{(-)_{\nat}^{\co\op}}(\QDist)^{\op}\to^{(-)^*}\QCat),\\
&(-)^{\nra}:=(\QCat\to^{(-)_{\nat}}(\QDist)^{\co}\to^{(-)^{\dag}}\QCat),\\
&(-)^{\nla}:=((\QCat)^{\co\op}\to^{(-)^{\nat\,\co\op}}(\QDist)^{\co}\to^{(-)^{\dag}}\QCat).
\end{align*}
Explicitly, each $\CQ$-functor $f:\bbX\lra\bbY$ gives rise to four $\CQ$-functors between the $\CQ$-categories of presheaves and copresheaves on $\bbX$, $\bbY$:
$$\begin{array}{llll}
f^{\ra}:=(f^{\nat})^*:&\PX\lra\PY,& f^{\la}:=(f_{\nat})^*=(f^{\nat})_*:&\PY\lra\PX,\\
f^{\nra}:=(f_{\nat})^{\dag}:&\PdX\lra\PdY,& f^{\nla}:=(f^{\nat})^{\dag}=(f_{\nat})_{\dag}:&\PdY\lra\PdX,
\end{array}$$
where $(f_{\nat})^*=(f^{\nat})_*$ and $(f^{\nat})^{\dag}=(f_{\nat})_{\dag}$ since one may easily verify
$$\lam\circ f_{\nat}=\lam\lda f^{\nat}\quad\text{and}\quad f^{\nat}\circ\lam'=f_{\nat}\rda\lam'$$
for all $\lam\in\PY$, $\lam'\in\PdY$ by routine calculation. As special cases of (dual) Kan adjunctions one immediately has
$$f^{\ra}\dv f^{\la}\quad\text{and}\quad f^{\nla}\dv f^{\nra}$$
in $\QCat$. Moreover, it is not difficult to obtain that
\begin{align}
(f^{\la}\lam)(x)=\lam(fx):|x|\lra|\lam|,\label{fla_lam}\\
(f^{\nla}\lam')(x)=\lam'(fx):|\lam'|\lra|x| \label{fnla_lam}
\end{align}
for all $\lam\in\PY$, $\lam'\in\PdY$ and $x\in\bbX_0$.


The following propositions are useful in the sequel and the readers may easily check their validity:

\begin{prop} {\rm\cite{Shen2014}} \label{fully_faithful_graph}
For each $\CQ$-functor $f:\bbX\lra\bbY$, the following statements are equivalent:
\begin{itemize}
\item[\rm (i)] $f$ is fully faithful.
\item[\rm (ii)] $f^{\nat}\circ f_{\nat}=\bbX$.
\item[\rm (iii)] $f^{\la}f^{\ra}=1_{\PX}$.
\item[\rm (iv)] $f^{\nla}f^{\nra}=1_{\PdX}$.
\end{itemize}
\end{prop}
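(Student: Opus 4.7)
The plan is to prove the cycle (i) $\Leftrightarrow$ (ii), (ii) $\Leftrightarrow$ (iii), (ii) $\Leftrightarrow$ (iv), using that the graph/cograph compose, modulo the hom-distributor $\bbY$, to give the pointwise hom of $f$, and that (co)presheaves absorb identities.

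For (i) $\Leftrightarrow$ (ii), I would just unpack the definition of composition in $\QDist$:
\[
(f^{\nat}\circ f_{\nat})(x,x')=\bv_{y\in\bbY_0}\bbY(y,fx')\circ\bbY(fx,y).
\]
Transitivity of $\bbY$ bounds this above by $\bbY(fx,fx')$; taking $y=fx$ and using $1_{|fx|}\leq\bbY(fx,fx)$ shows the bound is attained. Hence $f^{\nat}\circ f_{\nat}=\bbX$ is literally the identity $\bbY(fx,fx')=\bbX(x,x')$, which is fully faithfulness.

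For (ii) $\Leftrightarrow$ (iii), the key computation is the associativity identity
\[
f^{\la}f^{\ra}\mu=(f_{\nat})^*((f^{\nat})^*\mu)=(\mu\circ f^{\nat})\circ f_{\nat}=\mu\circ(f^{\nat}\circ f_{\nat}),
\]
valid for any $\mu\in\PX$. Thus (ii) immediately gives (iii) because every presheaf satisfies $\mu\circ\bbX=\mu$. Conversely, assuming (iii), I would test against the representable presheaves $\bbX(-,x_0)\in\PX$ (which lie in $\PX$ by transitivity of $\bbX$): evaluating $\bbX(-,x_0)\circ(f^{\nat}\circ f_{\nat})=\bbX(-,x_0)$ at $x'$ and keeping only the $y=x_0$ summand yields $(f^{\nat}\circ f_{\nat})(x',x_0)\leq\bbX(x',x_0)$. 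The reverse inequality $\bbX\leq f^{\nat}\circ f_{\nat}$ is just the unit of the adjunction $f_{\nat}\dv f^{\nat}$ noted before Proposition \ref{fully_faithful_graph}, so equality follows.

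For (ii) $\Leftrightarrow$ (iv), the argument is dual: one has
\[
f^{\nla}f^{\nra}\lam=(f^{\nat})^{\dag}((f_{\nat})^{\dag}\lam)=f^{\nat}\circ(f_{\nat}\circ\lam)=(f^{\nat}\circ f_{\nat})\circ\lam
\]
for any $\lam\in\PdX$, so (ii) implies (iv); conversely, testing (iv) against the representable copresheaves $\bbX(x_0,-)\in\PdX$ and isolating the $x=x_0$ term in the composition yields $(f^{\nat}\circ f_{\nat})(x_0,x')\leq\bbX(x_0,x')$, which combined with the unit inequality gives (ii). Nothing here is really an obstacle; the only point requiring care is making sure that in the ``testing'' step one uses the representable (co)presheaves rather than a density argument, since the Yoneda embedding has not yet been formally introduced in the excerpt.
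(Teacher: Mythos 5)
Your proof is correct: the computation $(f^{\nat}\circ f_{\nat})(x,x')=\bbY(fx,fx')$ (via transitivity and the unit of $\bbY$ at $y=fx$), the associativity identities $f^{\la}f^{\ra}\mu=\mu\circ(f^{\nat}\circ f_{\nat})$ and $f^{\nla}f^{\nra}\lam=(f^{\nat}\circ f_{\nat})\circ\lam$, and the testing against representables combined with the unit inequality $\bbX\leq f^{\nat}\circ f_{\nat}$ all check out. The paper itself gives no proof (it cites \cite{Shen2014} and leaves the verification to the reader), and your argument is precisely the routine check intended.
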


\begin{prop} {\rm\cite{Shen2014}} \label{Yoneda_natural}
$\{\sy_{\bbX}:\bbX\lra\PX\mid\bbX\in\ob(\QCat)\}$ and $\{\syd_{\bbX}:\bbX\lra\PdX\mid\bbX\in\ob(\QCat)\}$ are respectively 2-natural transformations from the identity 2-functor on $\QCat$ to $(-)^{\ra}$ and $(-)^{\nra}$; that is, the diagrams
$$\bfig
\square<700,500>[\bbX`\bbY`\PX`\PY;f`\sy_{\bbX}`\sy_{\bbY}`f^{\ra}]
\square(1700,0)<700,500>[\bbX`\bbY`\PdX`\PdY;f`\syd_{\bbX}`\syd_{\bbY}`f^{\nra}]
\efig$$
commute for all $\CQ$-functors $f:\bbX\lra\bbY$.
\end{prop}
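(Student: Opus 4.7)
The Yoneda embedding $\sy_\bbX\colon\bbX\to\PX$ sends $x\in\bbX_0$ to the representable presheaf $\bbX(-,x)\colon\bbX\oto\{|x|\}$ whose component at $z\in\bbX_0$ is the hom-arrow $\bbX(z,x)$; that this is a $\CQ$-distributor is exactly the transitivity of $\bbX$, and that $\sy_\bbX$ is a $\CQ$-functor is the general Yoneda statement $\PX(\sy_\bbX x,\sy_\bbX x')=\bbX(x,x')$. The dual $\syd_\bbX(x)=\bbX(x,-)$ is treated symmetrically. My plan is to verify first the equation of $\CQ$-functors $f^\ra\circ\sy_\bbX=\sy_\bbY\circ f$ and then upgrade it to a 2-natural transformation using the 2-functoriality already recorded in Proposition \ref{star_graph_adjoint}.

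For the ordinary naturality, I would compute both sides pointwise. Unfolding $f^\ra=(f^\nat)^\ast$ from the definition of $(-)^\ast$, one has, for every $x\in\bbX_0$ and every $y\in\bbY_0$,
\[
(f^\ra\sy_\bbX(x))(y)=(\sy_\bbX(x)\circ f^\nat)(y)=\bv_{z\in\bbX_0}\bbX(z,x)\circ\bbY(y,fz),
\]
while $(\sy_\bbY f(x))(y)=\bbY(y,fx)$. The inequality ``$\leq$'' follows because $f$ is a $\CQ$-functor ($\bbX(z,x)\leq\bbY(fz,fx)$) combined with transitivity of $\bbY$; the reverse ``$\geq$'' is obtained by restricting the join to the single index $z=x$ and using the reflexivity $1_{|x|}\leq\bbX(x,x)$. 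The dual identity $f^\nra\circ\syd_\bbX=\syd_\bbY\circ f$ is handled by the same bookkeeping, either written out directly with $f^\nra=(f_\nat)^\dag$ and $\syd_\bbX(x)(z)=\bbX(x,z)$, or obtained formally via the 2-isomorphism (\ref{Qop_Cat_iso}).

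For the full 2-natural statement I would invoke, on the one hand, the 2-functoriality of $(-)^\ra$ and $(-)^\nra$ (a consequence of Proposition \ref{star_graph_adjoint}), and on the other hand the monotonicity of $\sy_\bbY$, $\syd_\bbY$ as $\CQ$-functors. Given $f\leq g\colon\bbX\to\bbY$, evaluating the already-proved equations at $f$ and at $g$ gives $f^\ra\sy_\bbX=\sy_\bbY f$ and $g^\ra\sy_\bbX=\sy_\bbY g$; whiskering $f\leq g$ on the right with $\sy_\bbY$ produces $\sy_\bbY f\leq\sy_\bbY g$, and whiskering on the left with $\sy_\bbX$, together with $f^\ra\leq g^\ra$ from 2-functoriality, produces $f^\ra\sy_\bbX\leq g^\ra\sy_\bbX$; these are the same 2-cell, so the pasting equation required for 2-naturality holds. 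The argument for $\syd$ is identical.

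I do not anticipate a serious obstacle: the crux is just the identity $\bv_{z}\bbX(z,x)\circ\bbY(y,fz)=\bbY(y,fx)$, which amounts to recognizing $\sy_\bbX$ as encoding the hom-distributor $\bbX$ itself and $f^\ra$ as pre-composition with $f^\nat$, so that the computation is a single application of the definition of $f_\nat$ plus reflexivity and transitivity. The only mild subtlety is distinguishing ordinary naturality from 2-naturality, which is why I would write the whiskering step explicitly rather than leave it implicit.
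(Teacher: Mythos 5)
Your proof is correct: the pointwise identity $\bv_{z}\bbX(z,x)\circ\bbY(y,fz)=\bbY(y,fx)$ (via functoriality of $f$ plus transitivity for ``$\leq$'' and the index $z=x$ plus reflexivity for ``$\geq$'') is exactly what is needed, and your observation that the 2-cell compatibility is automatic once the 1-cell squares commute, since $\QCat$ is locally ordered, correctly disposes of the 2-naturality. The paper itself gives no proof, citing \cite{Shen2014}, so there is nothing to compare against; your argument is the standard one.
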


A $\CQ$-category $\bbX$ is \emph{complete} if each $\mu\in\PX$ has a \emph{supremum} $\sup\mu\in\bbX_0$ of type $|\mu|$ such that
$$\bbX(\sup\mu,-)=\bbX\lda\mu;$$
or equivalently, the \emph{Yoneda embedding} $\sy:\bbX\lra\PX,\ x\mapsto\bbX(-,x)$ has a left adjoint $\sup:\PX\lra\bbX$ in $\QCat$. It is well known that $\bbX$ is a complete $\CQ$-category if and only if $\bbX^{\op}$ is a complete $\CQ^{\op}$-category \cite{Stubbe2005}, where the completeness of $\bbX^{\op}$ may be translated as each $\lam\in\PdX$ admitting an \emph{infimum} $\inf\lam\in\bbX_0$ of type $|\lam|$ such that
$$\bbX(-,\inf\lam)=\lam\rda\bbX;$$
or equivalently, the \emph{co-Yoneda embedding} $\syd:\bbX\oto\PdX,\ x\mapsto\bbX(x,-)$ admitting a right adjoint $\inf:\PdX\lra\bbX$ in $\QCat$.

\begin{lem}[Yoneda] {\rm\cite{Stubbe2005}} \label{Yoneda_lemma}
Let $\bbX$ be a $\CQ$-category and $\mu\in\PX$, $\lam\in\PdX$. Then
$$\mu=\PX(\sy-,\mu)=\sy_{\nat}(-,\mu),\quad\lam=\PdX(\lam,\syd-)=(\syd)^{\nat}(\lam,-).$$
In particular, both $\sy$ and $\syd$ are fully faithful $\CQ$-functors.
\end{lem}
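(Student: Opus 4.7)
I would reduce the entire lemma to the single key identity
$$\PX(\sy x,\mu)=\mu(x)\quad\text{for all}\ x\in\bbX_0\ \text{and}\ \mu\in\PX. \qquad(\star)$$
The second equality in the presheaf statement, $\PX(\sy-,\mu)=\sy_{\nat}(-,\mu)$, is literally the definition of the graph of the $\CQ$-functor $\sy:\bbX\lra\PX$, so it holds by inspection. The copresheaf identity $\lam=\PdX(\lam,\syd-)=(\syd)^{\nat}(\lam,-)$ is just the $\CQ^{\op}$-version of $(\star)$ applied to the dual $\bbX^{\op}$ via the 2-isomorphism (\ref{Qop_Cat_iso}) of Remark \ref{QCat_dual}. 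Finally, specializing $(\star)$ to $\mu:=\sy y$ yields $\PX(\sy x,\sy y)=(\sy y)(x)=\bbX(x,y)$, which is exactly the full faithfulness of $\sy$; the analogous statement for $\syd$ is then also dual.

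To establish $(\star)$, I would unfold the left-hand side using the definitions of $\PX$ and of $\sy$ as
$$\PX(\sy x,\mu)=\mu\lda\sy x=\bw_{y\in\bbX_0}\mu(y)\lda\bbX(y,x),$$
and then prove the two inequalities separately. For $\leq$, I specialize the meet at the index $y=x$ and use the reflexivity $1_{|x|}\leq\bbX(x,x)$ together with the antitonicity of $-\lda-$ in its second argument, which shows the meet is dominated by $\mu(x)\lda\bbX(x,x)\leq\mu(x)\lda 1_{|x|}=\mu(x)$. For $\geq$, I invoke the defining property (\ref{distributor_def}) of $\mu:\bbX\oto\{|\mu|\}$ as a $\CQ$-distributor, which reduces to $\mu\circ\bbX=\mu$ and hence yields $\mu(x)\circ\bbX(y,x)\leq\mu(y)$ for every $y\in\bbX_0$; transposing across the adjunction $-\circ\bbX(y,x)\dv -\lda\bbX(y,x)$ rewrites this as $\mu(x)\leq\mu(y)\lda\bbX(y,x)$ for all $y$, and taking the meet over $y$ delivers the desired bound.

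The main obstacle I anticipate is purely bookkeeping: keeping track of the variances of $-\lda-$ (monotone in the first slot, antitone in the second), the correct orientation of $\bbX$ in the distributor axiom (left composition with $\bbX$ for a presheaf, right composition for a copresheaf), and the swap between $\lda$ and $\rda$ when dualizing to handle $\syd$. No conceptual obstacle is expected; once $(\star)$ is in hand, every other assertion of the lemma is a formal consequence.
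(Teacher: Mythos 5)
Your proposal is correct and is exactly the standard argument (the paper itself gives no proof, deferring to the cited reference, where the computation is carried out just as you describe): reduce to $\PX(\sy x,\mu)=\mu(x)$, prove $\leq$ by evaluating the meet at $y=x$ and using reflexivity, prove $\geq$ from $\mu\circ\bbX=\mu$ via the adjunction defining $\lda$, and obtain full faithfulness and the copresheaf statement by specialization and dualization. All the variance and typing details you flag do work out as you claim, so nothing further is needed.
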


In a $\CQ$-category $\bbX$, the \emph{tensor} of a $\CQ$-arrow $u:|x|\lra q$ and $x\in\bbX_0$, denoted by $u\otimes x$, is an object in $\bbX_0$ of type $|u\otimes x|=q$ such that $$\bbX(u\otimes x,-)=\bbX(x,-)\lda u.$$
$\bbX$ is \emph{tensored} if $u\otimes x$ exists for all $x\in\bbX_0$ and $\CQ$-arrows $u\in\sP\{|x|\}$. Dually, $\bbX$ is \emph{cotensored} if $\bbX^{\op}$ is a tensored $\CQ^{\op}$-category. Explicitly, the \emph{cotensor} of a $\CQ$-arrow $v:q\lra|x|$ and $x\in\bbX_0$ is an object $v\rat x\in\bbX_0$ of type $q$ satisfying
$$\bbX(-,v\rat x)=v\rda\bbX(-,x).$$

A $\CQ$-category $\bbX$ is \emph{order-complete} if each $\bbX_q$, the $\CQ$-subcategory of $\bbX$ with all the objects of type $q\in\CQ_0$, admits all joins (or equivalently, all meets) in the underlying preorder.

\begin{thm} {\rm\cite{Stubbe2006}} \label{complete_tensor}
A $\CQ$-category $\bbX$ is complete if, and only if, $\bbX$ is tensored, cotensored and order-complete. In this case,
$$\sup\mu=\bv_{x\in\bbX_0}\mu(x)\otimes x,\quad\inf\lam=\bw_{x\in\bbX_0}\lam(x)\rat x$$
for all $\mu\in\PX$ and $\lam\in\PdX$, where $\bv$ and $\bw$ respectively denote the underlying joins and meets in $\bbX$; conversely,
$$u\otimes x=\sup(u\circ\sy x),\quad v\rat x=\inf(\syd x\circ v)$$
for all $x\in\bbX_0$ and $\CQ$-arrows $u\in\sP\{|x|\}$, $v\in\sPd\{|x|\}$, and
$$\bv_{i\in I}x_i=\sup\bv_{i\in I}\sy x_i,\quad\bw_{i\in I}x_i=\inf\bv_{i\in I}\syd x_i$$
for all $\{x_i\}_{i\in I}\subseteq\bbX_q$ $(q\in\CQ_0)$, where the $\bv$ and $\bw$ on the left hand sides respectively denote the underlying joins and meets in $\bbX$, and the $\bv$ on the right hand sides respectively denote the joins in $\QDist(\bbX,\{q\})$ and $\QDist(\{q\},\bbX)$.
\end{thm}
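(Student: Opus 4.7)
The plan is to prove the equivalence by showing each direction constructively, with the explicit formulas of the theorem doing double duty as definitions on one side and verifications on the other. Throughout, one must keep careful track of types: for $\mu\in\PX$ of type $q=|\mu|$, each $\mu(x)$ is a $\CQ$-arrow $|x|\lra q$, so $\mu(x)\otimes x$ lives in $\bbX_q$, which is the correct fiber for their underlying join.

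\medskip

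For the forward direction, assume $\bbX$ is complete, so $\sup\dv\sy$. Given $x\in\bbX_0$ and $u\in\sP\{|x|\}$, the composite $u\circ\sy x$ is a presheaf on $\bbX$ of type $|u|$ (since the Yoneda embedding lands in $\PX$ and composition with $u$ reshapes the codomain). Setting $u\otimes x:=\sup(u\circ\sy x)$, one computes
\[
\bbX(u\otimes x,-)=\bbX\lda(u\circ\sy x)=(\bbX\lda\sy x)\lda u=\bbX(x,-)\lda u,
\]
where the last equality is the Yoneda lemma. Dually, $v\rat x:=\inf(\syd x\circ v)$ realizes the cotensor. For order-completeness, a family $\{x_i\}_{i\in I}\subseteq\bbX_q$ yields a presheaf $\mu:=\bv_i\sy x_i\in\PX$ (with $|\mu|=q$), and I will verify that $\sup\mu$ is the required underlying join by checking the universal property against the preorder.

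\medskip

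For the backward direction, assume $\bbX$ is tensored, cotensored, and order-complete. Given $\mu\in\PX$ of type $q$, I set
\[
\sup\mu:=\bv_{x\in\bbX_0}\mu(x)\otimes x,
\]
the underlying join in $\bbX_q$, and verify $\bbX(\sup\mu,z)=(\bbX\lda\mu)(z)$ for every $z\in\bbX_0$ by the chain
\[
\bbX\!\Bigl(\bv_x \mu(x)\otimes x,\;z\Bigr)=\bw_x\bbX(\mu(x)\otimes x,z)=\bw_x\bigl(\bbX(x,z)\lda\mu(x)\bigr)=(\bbX\lda\mu)(z),
\]
where the middle equality is the defining property of tensors. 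The dual argument handles $\inf\lam=\bw_x\lam(x)\rat x$.

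\medskip

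The key technical step, and the main obstacle, is the first equality above: that underlying joins in $\bbX_q$ convert to pointwise meets on the hom side, i.e.\ $\bbX(\bv_i x_i,z)=\bw_i\bbX(x_i,z)$. The inequality $\leq$ is automatic from the antitonicity of $\bbX(-,z)$ in the first variable, which itself follows from transitivity of the hom-arrows. The nontrivial inequality $\geq$ is precisely where the presence of cotensors is essential: setting $w:=\bw_i\bbX(x_i,z)$, the adjunction $\bbX(x,w\rat z)=w\rda\bbX(x,z)$ gives $w\leq\bbX(x_i,z)$ hence $1_{|x_i|}\leq\bbX(x_i,w\rat z)$, so each $x_i\leq w\rat z$ in the underlying preorder; therefore $\bv_i x_i\leq w\rat z$, which unpacks via the cotensor adjunction again to $w\leq\bbX(\bv_i x_i,z)$, as required. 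Once this hom-join compatibility is in hand, the remaining verifications are formal: the formulas $u\otimes x=\sup(u\circ\sy x)$ and $\bv_i x_i=\sup\bv_i\sy x_i$ follow by applying the just-established $\sup$ formula to the specific presheaves $u\circ\sy x$ and $\bv_i\sy x_i$, using Yoneda to identify $\sup(\sy x)=x$ (so that tensoring by $u$ on a representable returns the tensor), and the dual formulas come from the analogous argument in $\bbX^{\op}$.
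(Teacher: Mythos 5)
The paper does not prove this theorem; it is quoted verbatim from Stubbe's work, so there is no in-text argument to compare against. Your proof is correct and is essentially the standard one: the forward direction constructs tensors as $\sup(u\circ\sy x)$ via the identity $\bbX\lda(u\circ\sy x)=(\bbX\lda\sy x)\lda u=\bbX(x,-)\lda u$, and the backward direction hinges on the hom-join compatibility $\bbX(\bv_i x_i,z)=\bw_i\bbX(x_i,z)$, which you correctly isolate as the crux and correctly derive from the existence of cotensors. The only point worth flagging is that in the forward direction your formula $v\rat x:=\inf(\syd x\circ v)$ presupposes that a complete $\bbX$ admits all infima (equivalently, that $\bbX^{\op}$ is complete); this is a genuinely nontrivial fact, but the paper records it as known (citing Stubbe) immediately before the theorem, so relying on it is consistent with the intended level of the statement.
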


\begin{rem} \label{complete_nonempty}
A complete $\CQ$-category $\bbX$ has at least one object of type $q$ for every $q\in\CQ_0$, i.e., the bottom element in the underlying preorder of $\bbX_q$ as the empty join, since $\bbX$ is necessarily order-complete by Theorem \ref{complete_tensor}.
\end{rem}

\begin{exmp} \label{PX_tensor_complete} {\rm\cite{Stubbe2006}}
For each $\CQ$-category $\bbX$, $\PX$ and $\PdX$ are both separated, tensored, cotensored and complete $\CQ$-categories. It is easy to check that tensors and cotensors in $\PX$ are given by
$$u\otimes\mu=u\circ\mu,\quad v\rat\mu=v\rda\mu$$
for all $\mu\in\PX$ and $\CQ$-arrows $u\in\sP\{|\mu|\}$, $v\in\sPd\{|\mu|\}$, and consequently
$$\sup\Phi=\bv_{\mu\in\PX}\Phi(\mu)\circ\mu=\Phi\circ(\sy_{\bbX})_{\nat}=\sy_{\bbX}^{\la}\Phi,\quad\inf\Psi=\bw_{\mu\in\PX}\Psi(\mu)\rda\mu=\Psi\rda(\sy_{\bbX})_{\nat}$$
for all $\Phi\in\sP(\PX)$ and $\Psi\in\sPd(\PX)$, where we have applied the Yoneda lemma (Lemma \ref{Yoneda_lemma}) to get $\mu=(\sy_{\bbX})_{\nat}(-,\mu)$.
\end{exmp}

\begin{prop} {\rm\cite{Stubbe2006}} \label{la_tensor_preserving}
Let $f:\bbX\lra\bbY$ be a $\CQ$-functor, with $\bbX$ tensored (resp. cotensored). Then $f$ is a left (resp. right) adjoint in $\QCat$ if and only if
\begin{itemize}
\item[\rm (1)] $f$ preserves tensors (resp. cotensors) in the sense that $f(u\otimes_{\bbX}x)= u\otimes_{\bbY}fx$ (resp. $f(v\rat_{\bbX}x)=v\rat_{\bbY}fx$) for all $x\in\bbX_0$ and $\CQ$-arrows $u\in\sP\{|x|\}$ (resp. $v\in\sPd\{|x|\}$), and
\item[\rm (2)] $f$ is a left (resp. right) adjoint between the underlying preordered sets of $\bbX$ and $\bbY$.
\end{itemize}
\end{prop}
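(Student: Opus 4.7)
The tensored / left-adjoint assertion is the substantive one; the cotensored / right-adjoint version is dual under the 2-isomorphism (\ref{Qop_Cat_iso}). For the forward direction, suppose $f\dv g$ in $\QCat$, so that $\bbY(fx,y)=\bbX(x,gy)$ for all $x\in\bbX_0,y\in\bbY_0$. Restricting to $1_q$-components at each type $q$ yields $fx\leq y\iff x\leq gy$ in the underlying preorders, which is (2). For (1), let $x\in\bbX_0$ and let $u:|x|\lra q$ be any $\CQ$-arrow. Then
$\bbY(f(u\otimes x),-)=\bbX(u\otimes x,g-)=\bbX(x,g-)\lda u=\bbY(fx,-)\lda u$,
which is precisely the universal property defining the tensor $u\otimes fx$ in $\bbY$; hence this tensor exists in $\bbY$ and $f(u\otimes x)=u\otimes fx$.

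For the converse, assuming (1) and (2), I would let $g:\bbY_0\lra\bbX_0$ be the type-preserving map obtained by assembling, on each type-stratum, the underlying right adjoints supplied by (2). The main claim is then that the $\CQ$-enriched equality $\bbY(fx,y)=\bbX(x,gy)$ can be recovered by probing with arbitrary $\CQ$-arrows $u:|x|\lra|y|$: using (1) to identify $u\otimes fx$ with $f(u\otimes x)$, and then the underlying adjunction of (2),
$u\leq\bbY(fx,y)\iff f(u\otimes x)\leq y\iff u\otimes x\leq gy\iff u\leq\bbX(x,gy)$,
where the outer equivalences are the defining universal properties of the tensors in $\bbY$ and in $\bbX$, respectively. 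Since this holds for every $u\in\CQ(|x|,|y|)$, the two $\CQ$-arrows $\bbY(fx,y)$ and $\bbX(x,gy)$ coincide (set $u$ equal to each side in turn to obtain both inequalities).

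To finish, it remains to verify that the type-preserving $g$ is itself a $\CQ$-functor, for then the displayed equality is precisely an adjunction $f\dv g$ in $\QCat$. From the underlying counit $fgy\leq y$, i.e.\ $1_{|y|}\leq\bbY(fgy,y)$, one deduces $\bbY(y,y')\leq\bbY(y,y')\circ\bbY(fgy,y)\leq\bbY(fgy,y')=\bbX(gy,gy')$, as desired. The main subtlety in the whole argument is really just type-bookkeeping together with the observation that (1) forces the tensor $u\otimes fx$ in $\bbY$ to exist whenever $u\otimes x$ exists in $\bbX$ — it is this fact that licenses the universal-property step in the probing equivalences, so no global (co)completeness hypothesis on $\bbY$ is required.
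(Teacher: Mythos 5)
The paper does not prove this proposition—it is cited from [Stubbe2006]—but your argument is correct and is essentially the standard one: deriving (1) and (2) from the hom-set formulation of the adjunction, and conversely recovering $\bbY(fx,y)=\bbX(x,gy)$ by probing with arbitrary $u\in\CQ(|x|,|y|)$ via the universal properties of tensors, then checking $\CQ$-functoriality of $g$ from the counit. You also correctly handle the only subtle point, namely that no tensoredness of $\bbY$ need be assumed since the forward direction produces the tensors $u\otimes_{\bbY}fx$ and the converse takes their existence as part of hypothesis (1).
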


\begin{prop} {\rm\cite{Stubbe2005}} \label{la_sup_preserving}
Let $f:\bbX\lra\bbY$ be a $\CQ$-functor, with $\bbX$ complete. Then $f$ is a left (resp. right) adjoint in $\QCat$ if, and only if, $f$ is \emph{$\sup$-preserving} (resp. \emph{$\inf$-preserving}) in the sense that $f\sup_{\bbX}=\sup_{\bbY}f^{\ra}$ (resp. $f\inf_{\bbX}=\inf_{\bbY}f^{\nra}$).
\end{prop}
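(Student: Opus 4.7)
My proof plan is as follows. By the 2-isomorphism \eqref{Qop_Cat_iso}, the ``right adjoint / $\inf$-preserving'' version of the statement is formally dual to the ``left adjoint / $\sup$-preserving'' version, so I concentrate on the latter throughout.

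The pivot of the argument is the identity
\[\sy_{\bbX}\circ g=f^{\la}\circ\sy_{\bbY}:\bbY\lra\PX,\]
valid exactly when $f\dv g$ in $\QCat$: evaluating both sides at $y\in\bbY_0$ and $x\in\bbX_0$ produces $\bbX(x,gy)$ on the left and, by \eqref{fla_lam}, $\sy_{\bbY}(y)(fx)=\bbY(fx,y)$ on the right.

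For the direction ``$f$ is a left adjoint $\Rightarrow$ $f\sup_{\bbX}=\sup_{\bbY}f^{\ra}$'', I assume $f\dv g$ and verify that $f\sup_{\bbX}\mu$ satisfies the defining property of $\sup_{\bbY}(f^{\ra}\mu)$, namely $\bbY(f\sup_{\bbX}\mu,z)=\PY(f^{\ra}\mu,\sy_{\bbY}z)$ for every $z\in\bbY_0$. The computation
\[\bbY(f\sup_{\bbX}\mu,z)=\bbX(\sup_{\bbX}\mu,gz)=\PX(\mu,\sy_{\bbX}gz)=\PX(\mu,f^{\la}\sy_{\bbY}z)=\PY(f^{\ra}\mu,\sy_{\bbY}z)\]
uses in turn the adjunction $f\dv g$, the adjunction $\sup_{\bbX}\dv\sy_{\bbX}$, the pivot identity, and the adjunction $f^{\ra}\dv f^{\la}$ from Proposition \ref{star_graph_adjoint}.

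For the converse, I assume $f\sup_{\bbX}=\sup_{\bbY}f^{\ra}$ and set $g:=\sup_{\bbX}\circ f^{\la}\circ\sy_{\bbY}:\bbY\lra\bbX$, a composite of $\CQ$-functors. I prove $f\dv g$ by showing $\bbX(x,gy)=\bbY(fx,y)$ in both directions. The ``$\geq$'' direction is routine: the unit $1_{\PX}\leq\sy_{\bbX}\sup_{\bbX}$ of $\sup_{\bbX}\dv\sy_{\bbX}$ gives $(f^{\la}\sy_{\bbY}y)(x)\leq\bbX(x,gy)$, whose left-hand side equals $\bbY(fx,y)$ by \eqref{fla_lam}. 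The ``$\leq$'' direction is the crux; I first show $f(gy)\leq y$ in the underlying preorder of $\bbY$ by rewriting $f(gy)=\sup_{\bbY}(f^{\ra}f^{\la}\sy_{\bbY}y)$ via the hypothesis, then applying the counit $f^{\ra}f^{\la}\leq 1_{\PY}$ of $f^{\ra}\dv f^{\la}$ and monotonicity of $\sup_{\bbY}$ (which holds wherever both sups exist, by the pointwise formula $\bbY(\sup\nu,z)=\PY(\nu,\sy_{\bbY}z)$) to bound this by $\sup_{\bbY}\sy_{\bbY}y$, which by Lemma \ref{Yoneda_lemma} is isomorphic to $y$. Then $\CQ$-functoriality of $f$ and transitivity of $\bbY$ yield $\bbX(x,gy)\leq\bbY(fx,f(gy))\leq\bbY(fx,y)$.

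The main obstacle is this final ``$\leq$'': one cannot appeal to the plausible but generally false identity $\bbX(x,\sup\mu)=\mu(x)$, and the hypothesis $f\sup_{\bbX}=\sup_{\bbY}f^{\ra}$ must be combined with the counit of $f^{\ra}\dv f^{\la}$ to extract exactly the required inequality for the specific presheaf $\mu=f^{\la}\sy_{\bbY}y$.
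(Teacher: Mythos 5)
Your proof is correct. Note that the paper itself offers no proof of Proposition \ref{la_sup_preserving} --- it is quoted from Stubbe's work --- so there is nothing in-text to compare against; your argument is the standard one, correctly executed: the forward direction verifies the defining property $\bbY(f\sup_{\bbX}\mu,-)=\bbY\lda f^{\ra}\mu$ (which in particular establishes that $\sup_{\bbY}f^{\ra}\mu$ exists even though $\bbY$ is not assumed complete), and the converse correctly manufactures the right adjoint as $g=\sup_{\bbX}f^{\la}\sy_{\bbY}$ and extracts the counit inequality $fgy\leq y$ from the hypothesis together with $f^{\ra}f^{\la}\leq 1_{\PY}$ and $\sup_{\bbY}\sy_{\bbY}y\cong y$. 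The only cosmetic caveat is that, since $\bbY$ need not be separated, equalities such as $f\sup_{\bbX}=\sup_{\bbY}f^{\ra}$ and $fgy\leq y$ should be read up to the isomorphism $\cong$ of the underlying preorder, as is conventional.
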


Therefore, left adjoint $\CQ$-functors between complete $\CQ$-categories are exactly $\sup$-preserving $\CQ$-functors. Separated complete $\CQ$-categories and $\sup$-preserving $\CQ$-functors constitute a 2-subcategory of $\QCat$ and we denote it by $\QSup$. In fact, it is not difficult to verify that $\QSup$ is a (large) quantaloid with the local order inherited from $\QCat$.

\subsection{$\CQ$-closure operators and $\CQ$-closure systems}

A $\CQ$-functor $c:\bbX\lra\bbX$ is a \emph{$\CQ$-closure operator} on $\bbX$ if $1_{\bbX}\leq c$ and $cc\leq c$, where the second condition actually becomes $cc\cong c$ since the reverse inequality already holds by the first condition. The most prominent example is that each pair $f\dv g:\bbY\lra\bbX$ of adjoint $\CQ$-functors induces a $\CQ$-closure operator $gf:\bbX\lra\bbX$.

A $\CQ$-subcategory $\bbA$ of $\bbX$ is a \emph{$\CQ$-closure system} if the inclusion $\CQ$-functor $j:\bbA\ \to/^(->/\bbX$ has a left adjoint in $\QCat$.

The dual notions are \emph{$\CQ$-interior operators} and \emph{$\CQ$-interior systems}, which correspond bijectively to $\CQ^{\op}$-closure operators and $\CQ^{\op}$-closure systems under the isomorphism (\ref{Qop_Cat_iso}) in Remark \ref{QCat_dual}.

\begin{rem}
In the language of category theory, $\CQ$-closure operators are exactly \emph{$\CQ$-monads} (note that the ``$\CQ$-natural transformation'' between $\CQ$-functors is simply given by the local order in $\QCat$), and $\CQ$-closure systems are precisely \emph{reflective} $\CQ$-subcategories.
\end{rem}


The following characterizations of $\CQ$-closure operators and $\CQ$-closure systems can be deduced from the similar results in \cite{Shen2014,Shen2013a}. Their direct proofs are also straightforward and will be left to the readers:

\begin{prop} \label{closure_operator_adjoint_inclusion}
Let $c:\bbX\lra\bbX$ be a $\CQ$-functor. Then $c$ is a $\CQ$-closure operator on $\bbX$ if, and only if, its codomain restriction $\oc:\bbX\lra\bbA$ is left adjoint to the inclusion $\CQ$-functor $j:\bbA\ \to/^(->/\bbX$, where
$$\bbA_0=\{cx\mid x\in\bbX_0\}.$$
In particular, $\bbA$ is a $\CQ$-closure system of $\bbX$.
\end{prop}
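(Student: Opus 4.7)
The proposition is a two-sided statement, so I would split the proof into the standard ``adjunction gives monad'' direction and its converse.

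For the direction $(\Leftarrow)$, suppose $\bbA$ is the full $\CQ$-subcategory on $\{cx\mid x\in\bbX_0\}$ and the codomain restriction $\oc:\bbX\lra\bbA$ of $c$ is left adjoint to the inclusion $j:\bbA\to\bbX$. Then by construction $j\oc=c$ as $\CQ$-functors, so the unit $1_{\bbX}\leq j\oc$ of the adjunction immediately reads $1_{\bbX}\leq c$, and the counit $\oc j\leq 1_{\bbA}$ gives
$$cc=j\oc j\oc\leq j\cdot 1_{\bbA}\cdot\oc=j\oc=c,$$
so $c$ is a $\CQ$-closure operator. This half is essentially the general fact that any composite $gf$ with $f\dv g$ carries a monad structure; no calculation beyond 2-categorical manipulation is needed.

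For the direction $(\Rightarrow)$, assume that $c$ is a $\CQ$-closure operator and define $\bbA$ as in the statement; since $\bbA$ is declared to be a full $\CQ$-subcategory of $\bbX$, the hom-arrows in $\bbA$ are inherited from $\bbX$, and the codomain restriction $\oc$ of $c$ is automatically a $\CQ$-functor. The adjunction $\oc\dv j$ then amounts to the identity
$$\bbA(\oc x,a)=\bbX(x,ja)\quad(x\in\bbX_0,\ a\in\bbA_0),$$
which since $\bbA$ is full boils down to proving $\bbX(cx,a)=\bbX(x,a)$ for every $a\in\bbA_0$. I would establish this equality via two inequalities. The inequality $\bbX(cx,a)\leq\bbX(x,a)$ follows from $1_{\bbX}\leq c$, which reads $1_{|x|}\leq\bbX(x,cx)$, by the transitivity of $\bbX$:
$$\bbX(cx,a)\leq\bbX(cx,a)\circ\bbX(x,cx)\leq\bbX(x,a).$$
For the reverse inequality, use that $c$ is a $\CQ$-functor to get $\bbX(x,a)\leq\bbX(cx,ca)$; since $a=cy$ for some $y\in\bbX_0$, idempotency $cc\cong c$ yields $ca=ccy\cong cy=a$, in particular $1_{|a|}\leq\bbX(ca,a)$, and transitivity once more gives $\bbX(cx,ca)\leq\bbX(cx,a)$.

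The final sentence (``$\bbA$ is a $\CQ$-closure system of $\bbX$'') is then immediate from the very definition of $\CQ$-closure systems, since we have just produced the required left adjoint $\oc$ to the inclusion. I do not expect a serious obstacle here; the only point that demands care is the use of the up-to-isomorphism idempotency $cc\cong c$ (rather than an on-the-nose equality) to guarantee $ca\cong a$ for $a\in\bbA_0$, which is precisely what makes the hom-identity $\bbX(cx,a)=\bbX(x,a)$ work for every object of $\bbA$.
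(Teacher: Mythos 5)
Your proof is correct: the $(\Leftarrow)$ direction is the standard ``adjunction yields a monad'' computation, and the $(\Rightarrow)$ direction correctly reduces the adjunction to the hom-identity $\bbX(cx,a)=\bbX(x,a)$ for $a\in\bbA_0$, establishing both inequalities from reflexivity, transitivity, functoriality of $c$, and the up-to-isomorphism idempotency $ca\cong a$. The paper explicitly leaves this proof to the reader as ``straightforward,'' and your argument is exactly the direct verification intended, with the one genuinely delicate point (that $cc\cong c$ rather than $cc=c$ suffices) handled properly.
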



\begin{prop} \label{closure_system_condition}
Let $\bbA$ be a $\CQ$-subcategory of a separated complete $\CQ$-category $\bbX$ and $j:\bbA\ \to/^(->/\bbX$ the inclusion $\CQ$-functor. The following statements are equivalent:
\begin{itemize}
\item[\rm (i)] $\bbA$ is a $\CQ$-closure system of $\bbX$.
\item[\rm (ii)] $\bbA_0=\{cx\mid x\in\bbX_0\}$ for some $\CQ$-closure operator $c:\bbX\lra\bbX$.
\item[\rm (iii)] $\bbA$ is closed with respect to infima in $\bbX$ in the sense that ${\inf}_{\bbX}j^{\nra}\lam\in\bbA_0$ for all $\lam\in\PdA$.
\item[\rm (iv)] $\bbA$ is closed with respect to cotensors and underlying meets in $\bbX$.
\end{itemize}
\end{prop}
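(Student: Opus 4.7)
I will establish the four-way equivalence by proving $(\text{i})\Leftrightarrow(\text{ii})$, $(\text{i})\Leftrightarrow(\text{iii})$, and $(\text{iii})\Leftrightarrow(\text{iv})$ separately. The first is a direct application of Proposition \ref{closure_operator_adjoint_inclusion}: from a $\CQ$-closure operator $c$ one extracts the reflector $\oc$, while conversely from $r\dv j$ one builds the $\CQ$-closure operator $c=jr$, whose image coincides with $\bbA_0$ because $j$ is fully faithful and $\bbA$ inherits separatedness from $\bbX$. For $(\text{iii})\Leftrightarrow(\text{iv})$ I will unfold Theorem \ref{complete_tensor}: after substituting the definition of $j^{\nra}\lam=j_{\nat}\circ\lam$ and using the identities $(\bv_a \alpha_a)\rat z=\bw_a(\alpha_a\rat z)$ and $\bw_z\bbX(ja,z)\rat z=ja$, the infimum collapses to $\inf_{\bbX}j^{\nra}\lam=\bw_{a\in\bbA_0}\lam(a)\rat_{\bbX}ja$, a meet of cotensors of elements of $\bbA$; this yields $(\text{iv})\Rightarrow(\text{iii})$. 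Conversely, cotensors $v\rat_{\bbX}ja$ and underlying meets $\bw_i a_i$ are themselves of the form $\inf_{\bbX}j^{\nra}(-)$ via the naturality $\syd_{\bbX}\circ j=j^{\nra}\circ\syd_{\bbA}$ (Proposition \ref{Yoneda_natural}) and the fact that $j^{\nra}=j_{\nat}\circ-$ preserves joins of $\CQ$-distributors.

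For $(\text{i})\Leftrightarrow(\text{iii})$ the central technical tool is the simplification
\[\bbX(y,\inf\nolimits_{\bbX}j^{\nra}\lam)=\bw_{a\in\bbA_0}\lam(a)\rda\bbX(y,ja),\]
which follows from the defining identity $\bbX(-,\inf\lam')=\lam'\rda\bbX$, the expansion of $j^{\nra}$, and the Yoneda-type cancellation $\bw_z\bbX(ja,z)\rda\bbX(y,z)=\bbX(y,ja)$ (a two-line check using transitivity and $\bbX(ja,ja)\geq 1_{|a|}$). For $(\text{i})\Rightarrow(\text{iii})$, take $r\dv j$, $\lam\in\PdA$, and $b:=\inf_{\bbX}j^{\nra}\lam$; substituting $y=jrb$ into the simplified formula and using both the fully-faithfulness identity $\bbX(jrb,ja)=\bbA(rb,a)$ and the adjunction identity $\bbA(rb,a)=\bbX(b,ja)$ produces $\bbX(jrb,b)=\bbX(b,b)\geq 1_{|b|}$, hence $jrb\leq b$; combined with the unit $b\leq jrb$ and the separatedness of $\bbX$ this forces $b=jrb\in\bbA_0$.

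For $(\text{iii})\Rightarrow(\text{i})$ I define $rx:=\inf_{\bbX}j^{\nra}(j^{\nla}\syd x)$, i.e.\ the infimum of the copresheaf $a\mapsto\bbX(x,ja)$; hypothesis (iii) places $rx$ in $\bbA_0$. The simplified formula now reads $\bbX(y,rx)=\bw_{a}\bbX(x,ja)\rda\bbX(y,ja)$: evaluating at $y=x$ yields $\bbX(x,rx)\geq 1_{|x|}$ (so $x\leq rx$), and evaluating at $y=rx$ together with $x\leq rx$ forces the hom-identity $\bbA(rx,a)=\bbX(rx,ja)=\bbX(x,ja)$, which is exactly the adjunction $r\dv j$ in $\QCat$; $\CQ$-functoriality of $r$ then drops out of this hom-identity. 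The sole obstacle is isolating the simplified form of $\bbX(y,\inf_{\bbX}j^{\nra}\lam)$ displayed above; once that identity is in hand both directions of $(\text{i})\Leftrightarrow(\text{iii})$ reduce to routine manipulation of the adjunction in the underlying preorder of $\bbX$.
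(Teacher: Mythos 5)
Your proof is correct. Note that the paper gives no argument for this proposition at all---it is explicitly left to the reader, with a pointer to similar results in \cite{Shen2014,Shen2013a}---so there is no in-paper proof to compare against; your direct verification supplies exactly what is omitted. The two reductions you isolate, namely $\inf_{\bbX}j^{\nra}\lam=\bw_{a\in\bbA_0}\lam(a)\rat_{\bbX}ja$ (which makes (iii)$\Leftrightarrow$(iv) immediate from Theorem \ref{complete_tensor} and Proposition \ref{Yoneda_natural}) and $\bbX(y,\inf_{\bbX}j^{\nra}\lam)=\bw_{a\in\bbA_0}\lam(a)\rda\bbX(y,ja)$ (which drives both directions of (i)$\Leftrightarrow$(iii)), are the right ones, and the remaining steps---fully faithfulness plus separatedness of $\bbA$ for (i)$\Leftrightarrow$(ii), the co-Yoneda cancellation $\bw_{z}\bbX(ja,z)\rda\bbX(y,z)=\bbX(y,ja)$, and extracting the adjunction $r\dv j$ (including $\CQ$-functoriality of $r$) from the hom-identity $\bbA(rx,a)=\bbX(x,ja)$---all check out.
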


\begin{cor} \label{closure_system_sup}
Each $\CQ$-closure system $\bbA$ of a complete $\CQ$-category $\bbX$ is itself a complete $\CQ$-category. Furthermore, let $c:\bbX\lra\bbA$ be the left adjoint of the inclusion $\CQ$-functor $j:\bbA\ \to/^(->/\bbX$, one has
$${\sup}_{\bbA}\mu=c\cdot{\sup}_{\bbX}j^{\ra}\mu,\quad{\inf}_{\bbA}\lam={\inf}_{\bbX}j^{\nra}\lam$$
for all $\mu\in\PA$, $\lam\in\PdA$. In particular,
$$u\otimes_{\bbA}x=c(u\otimes_{\bbX}x),\quad v\rat_{\bbA}x=v\rat_{\bbX}x$$
for all $x\in\bbA_0$ and $\CQ$-arrows $u\in\sP\{|x|\}$, $v\in\sPd\{|x|\}$, and
$$\bigsqcup_{i\in I}x_i=c\Big(\bv_{i\in I}x_i\Big),\quad \bigsqcap_{i\in I}x_i=\bw_{i\in I}x_i$$
for all $\{x_i\}_{i\in I}\subseteq\bbA_0$, where we write $\bigsqcup$, $\bigsqcap$ respectively for the underlying joins, meets in $\bbA$, and $\bv$, $\bw$ for those in $\bbX$.
\end{cor}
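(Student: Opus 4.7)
The plan is to prove everything from two basic inputs: that $\bbA$ is closed under infima in $\bbX$ (Proposition \ref{closure_system_condition}(iii)) and the 2-functorial properties of Yoneda embeddings and (co)presheaf constructions (Propositions \ref{la_sup_preserving}, \ref{Yoneda_natural}, Theorem \ref{complete_tensor}). The strategy is to establish the infimum formula first, deduce completeness of $\bbA$, obtain the supremum formula by composing adjunctions, and finally read off the tensor, cotensor, join, and meet identities as corollaries.

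For the infimum, given $\lam\in\PdA$, Proposition \ref{closure_system_condition}(iii) guarantees that $x^*:=\inf_{\bbX}j^{\nra}\lam$ lies in $\bbA_0$. To see that $x^*$ is the infimum of $\lam$ in $\bbA$, one verifies $\bbA(-,x^*)=\lam\rda\bbA$ by restricting the $\bbX$-identity $\bbX(-,x^*)=j^{\nra}\lam\rda\bbX$ to $\bbA_0$ and simplifying via $j^{\nra}\lam=j_{\nat}\circ\lam$ together with $j_{\nat}(a,b)=\bbA(a,b)$ for $a,b\in\bbA_0$. This simultaneously shows that every copresheaf on $\bbA$ has an infimum, so by the dual of Theorem \ref{complete_tensor} the $\CQ$-category $\bbA$ is complete. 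The supremum formula then follows by composing adjunctions: $c\circ\sup_{\bbX}\circ j^{\ra}$ is left adjoint in $\QCat$ to $j^{\la}\circ\sy_{\bbX}\circ j$, and a short check using (\ref{fla_lam}) identifies this right adjoint with $\sy_{\bbA}$, forcing the composite to coincide with $\sup_{\bbA}$.

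The remaining identities are corollaries. For the cotensor, $v\rat_{\bbA}x=\inf_{\bbA}(\syd_{\bbA}x\circ v)=\inf_{\bbX}j^{\nra}(\syd_{\bbA}x\circ v)$, and the Yoneda naturality identity $j^{\nra}\syd_{\bbA}=\syd_{\bbX}j$ (Proposition \ref{Yoneda_natural}) collapses this to $\inf_{\bbX}(\syd_{\bbX}x\circ v)=v\rat_{\bbX}x$. The tensor formula is the dual computation, using that $j^{\ra}$ preserves tensors in $\PA$ (Proposition \ref{la_tensor_preserving}) and the outer application of $c$ supplied by the supremum formula. For the underlying join, one writes $\bigsqcup_i x_i=\sup_{\bbA}(\bv_i\sy_{\bbA}x_i)$, applies the supremum formula, and uses that $j^{\ra}$ is a left adjoint to reduce to $c(\sup_{\bbX}\bv_i\sy_{\bbX}x_i)=c(\bv_i x_i)$; the meet is the dual computation.

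The principal subtlety is the very first step: showing that the restriction of $j^{\nra}\lam\rda\bbX$ to $\bbA_0$ coincides with $\lam\rda\bbA$. The two defining meets range over different index sets ($\bbX_0$ versus $\bbA_0$), so one must check that the extra terms indexed by $\bbX_0\setminus\bbA_0$ are absorbed correctly. This amounts to unpacking the explicit form of right implication in $\QRel$ and exploiting that $x^*$ itself belongs to $\bbA_0$; everything downstream is then essentially formal.
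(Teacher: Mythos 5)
Your proof is correct and supplies essentially the argument the paper intends but leaves to the reader: the corollary is stated without proof, and your route --- obtaining ${\inf}_{\bbA}$ from Proposition \ref{closure_system_condition}(iii) after checking that $(j^{\nra}\lam\rda\bbX)$ restricted to $\bbA_0$ collapses to $\lam\rda\bbA$, deriving ${\sup}_{\bbA}=c\cdot{\sup}_{\bbX}j^{\ra}$ by composing the adjunctions $c\dv j$, ${\sup}_{\bbX}\dv\sy_{\bbX}$, $j^{\ra}\dv j^{\la}$ and identifying $j^{\la}\sy_{\bbX}j=\sy_{\bbA}$ via Equation (\ref{fla_lam}), and then reading off tensors, cotensors and underlying joins/meets from Theorem \ref{complete_tensor} together with the Yoneda naturality of Proposition \ref{Yoneda_natural} --- is the natural one, and each step checks out. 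The one caveat worth recording is that Proposition \ref{closure_system_condition} is stated for a \emph{separated} complete $\bbX$ while the corollary assumes only completeness; in the non-separated case one only gets ${\inf}_{\bbX}j^{\nra}\lam\cong jc({\inf}_{\bbX}j^{\nra}\lam)\in\bbA_0$ (using $1_{\bbX}\leq jc$ and the adjunction $c\dv j$), so the displayed formulas hold up to isomorphism --- which is all that suprema and infima determine anyway, and is an equality in the paper's only application $\bbX=\PX$.
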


The readers may easily write down the dual results of the above conclusions for $\CQ$-interior operators and $\CQ$-interior systems, which we skip here.

\section{$\CQ$-closure spaces and continuous $\CQ$-functors} \label{Qclosure_space_continuous_functor}

We introduce $\CQ$-closure spaces in this section and investigate the category of $\CQ$-closure spaces and continuous $\CQ$-functors as a natural extension of the well-known category $\Cls$ of closure spaces and continuous maps.

\subsection{The category of $\CQ$-closure spaces and continuous $\CQ$-functors} \label{QCatCls}

A \emph{$\CQ$-closure space} \cite{Shen2013a} is a pair $(\bbX,c)$ that consists of a $\CQ$-category $\bbX$ and a $\CQ$-closure operator $c:\PX\lra\PX$ on $\PX$. A \emph{continuous $\CQ$-functor} $f:(\bbX,c)\lra(\bbY,d)$ between $\CQ$-closure spaces is a $\CQ$-functor $f:\bbX\lra\bbY$ such that
$$f^{\ra}c\leq df^{\ra}:\PX\lra\PY.$$
$\CQ$-closure spaces and continuous $\CQ$-functors constitute a 2-category $\QCatCls$ with the local order inherited from $\QCat$.

In a $\CQ$-closure space $(\bbX,c)$, since $\PX$ is separated, $c$ is idempotent and the corresponding $\CQ$-closure system
$$\sC(\bbX,c):=\{c\mu\mid\mu\in\PX\}=\{\mu\in\PX\mid c\mu=\mu\}$$
consists of fixed points of $c$, which is a complete $\CQ$-category since so is $\PX$. A presheaf $\mu\in\PX$ is \emph{closed} if $\mu\in\sC(\bbX,c)$. It follows from Propositions \ref{closure_operator_adjoint_inclusion}, \ref{closure_system_condition} that $\CQ$-closure operators on $\PX$ correspond bijectively to $\CQ$-closure systems of $\PX$, thus a $\CQ$-closure space on $\bbX$ is completely determined by the $\CQ$-closure system of closed presheaves.

The following proposition shows that continuous $\CQ$-functors may be characterized as the inverse images of closed presheaves staying closed, and we will prove its generalized version in the next section (see Proposition \ref{continuous_dist_condition}):

\begin{prop} {\rm\cite{Shen2013a}} \label{continuous_functor_condition}
Let $(\bbX,c)$, $(\bbY,d)$ be $\CQ$-closure spaces and $f:\bbX\lra\bbY$ a $\CQ$-functor. The following statements are equivalent:
\begin{itemize}
\item[\rm (i)] $f:(\bbX,c)\lra(\bbY,d)$ is a continuous $\CQ$-functor.
\item[\rm (ii)] $df^{\ra}c\leq df^{\ra}$, thus $df^{\ra}c=df^{\ra}$.
\item[\rm (iii)] $cf^{\la}d\leq f^{\la}d$, thus $cf^{\la}d=f^{\la}d$.
\item[\rm (iv)] $f^{\la}\lam\in\sC(\bbX,c)$ whenever $\lam\in\sC(\bbY,d)$.
\end{itemize}
\end{prop}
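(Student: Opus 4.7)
The plan is to establish the four equivalences by cycling through them, exploiting three standard tools: (a) the adjunction $f^{\ra}\dv f^{\la}$ in $\QCat$ (unit $1_{\PX}\leq f^{\la}f^{\ra}$, counit $f^{\ra}f^{\la}\leq 1_{\PY}$), (b) the defining properties of the $\CQ$-closure operators $c$ and $d$ (expansiveness $1\leq c,d$ and idempotency $cc=c$, $dd=d$, the latter since $\PX$ and $\PY$ are separated), and (c) the bijection between $\CQ$-closure operators and their fixed-point $\CQ$-closure systems from Propositions \ref{closure_operator_adjoint_inclusion} and \ref{closure_system_condition}.

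First I would handle (i)$\Leftrightarrow$(ii). The implication (ii)$\Rightarrow$(i) is immediate from $1\leq d$, since $f^{\ra}c\leq df^{\ra}c\leq df^{\ra}$. For (i)$\Rightarrow$(ii), apply the monotone $\CQ$-functor $d$ on the left of $f^{\ra}c\leq df^{\ra}$ and use $dd=d$. Second, for (i)$\Leftrightarrow$(iii), the main trick is the adjunction. Starting from (i), rewrite $f^{\ra}c\leq df^{\ra}$ by adjunction as $c\leq f^{\la}df^{\ra}$; then for any $\lam\in\PY$ substitute $\mu=f^{\la}d\lam$ and bound $f^{\ra}f^{\la}d\lam\leq d\lam$ by the counit, followed by $dd=d$, to obtain $cf^{\la}d\leq f^{\la}df^{\ra}f^{\la}d\leq f^{\la}d$. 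Conversely, from (iii), the adjunction yields $f^{\ra}cf^{\la}d\leq d$; then using the unit $\mu\leq f^{\la}f^{\ra}\mu\leq f^{\la}df^{\ra}\mu$ and monotonicity of $c$ and $f^{\ra}$ gives
\[
f^{\ra}c\mu\leq f^{\ra}cf^{\la}df^{\ra}\mu\leq df^{\ra}\mu.
\]

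Finally I would deal with (iii)$\Leftrightarrow$(iv) via the fixed-point description $\sC(\bbY,d)=\{\lam\in\PY\mid d\lam=\lam\}$ and similarly for $\sC(\bbX,c)$. Assuming (iii), for $\lam\in\sC(\bbY,d)$ we have $d\lam=\lam$, hence $cf^{\la}\lam=cf^{\la}d\lam\leq f^{\la}d\lam=f^{\la}\lam$, which with $1\leq c$ forces $cf^{\la}\lam=f^{\la}\lam$, i.e., $f^{\la}\lam\in\sC(\bbX,c)$. Conversely, for arbitrary $\lam\in\PY$ the presheaf $d\lam$ is closed, so by (iv) $f^{\la}d\lam\in\sC(\bbX,c)$, i.e., $cf^{\la}d\lam=f^{\la}d\lam$, whence $cf^{\la}d=f^{\la}d$, which in particular yields (iii).

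I do not expect a serious obstacle: everything is formal manipulation in the 2-category $\QCat$. The only step that requires a moment of care is the adjoint transposition in (i)$\Leftrightarrow$(iii), where one must be careful not to confuse the direction of the inequality when moving $f^{\ra}$ and $f^{\la}$ across, and to use the counit and idempotency in the right order so that the intermediate term $f^{\la}df^{\ra}f^{\la}d$ collapses back to $f^{\la}d$. Once that is in place, the ``thus $=$'' clauses in (ii) and (iii) come for free from $1\leq c$ and $1\leq d$ together with the one-sided inequalities just established.
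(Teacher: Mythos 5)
Your proof is correct and relies on the same ingredients the paper uses: the adjunction $f^{\ra}\dv f^{\la}$ (unit and counit), expansiveness and idempotency of $c$ and $d$, and the fixed-point description of $\sC(\bbX,c)$ and $\sC(\bbY,d)$. The paper obtains this proposition as the special case $\zeta=f^{\nat}$ of the distributor version (Proposition \ref{continuous_dist_condition}), proved there by the cycle (i)$\Rightarrow$(ii)$\Rightarrow$(iii)$\Rightarrow$(i) rather than your pairwise equivalences, but the underlying manipulations are essentially identical.
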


$\QCatCls$ is a well-behaved category, which not only has all small colimits and small limits, but also possesses ``all possible'' large colimits and large limits that a locally small category can have; in fact, $\QCatCls$ is \emph{totally cocomplete} and \emph{totally complete} (will be explained below). To see this, we first establish its topologicity \cite{Adamek1990} over $\QCat$.

Recall that given a faithful functor $U:\CE\lra\CB$, a \emph{$U$-structured source} from $S\in\ob\CB$ is given by a (possibly large) family of objects $Y_i\in\ob\CE$ and $\CB$-morphisms $f_i:S\lra U Y_i$ ($i\in I$). A \emph{lifting} of $(f_i:S\lra UY_i)_{i\in I}$ is an $\CE$-object $X$ together with a family of $\CE$-morphisms $\check{f_i}:X\lra Y_i$ such that $UX=S$ and $U\check{f_i}=f_i$ for all $i\in I$, and the lifting is \emph{$U$-initial} if any $\CB$-morphism $g:UZ\lra S$ lifts to an $\CE$-morphism $\check{g}:Z\lra X$ as soon as every $\CB$-morphism $f_i g:UZ\lra UY_i$ lifts to an $\CE$-morphism $h_i:Z\lra Y_i$ $(i\in I)$. $U$ is called \emph{topological} if all $U$-structured sources admit $U$-initial liftings. It is well known that $U:\CE\lra\CB$ is topological if, and only if, $U^{\op}:\CE^{\op}\lra\CB^{\op}$ is topological (see \cite[Theorem 21.9]{Adamek1990}).
$$\bfig
\ptriangle/->`<-`<-/<800,500>[X`Y_i`Z;\check{f_i}`\check{g}`h_i]
\ptriangle(2000,0)/->`<-`<-/<800,500>[S`UY_i`UZ;f_i`g`Uh_i]
\morphism(1200,250)/|->/<400,0>[`;U]
\efig$$

\begin{prop} \label{U_topological}
The forgetful functor $\sU:\QCatCls\lra\QCat$ is topological.
\end{prop}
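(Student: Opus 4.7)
My plan is, given a $\sU$-structured source $(f_i : \bbX \to \sU(\bbY_i, d_i))_{i\in I}$ in $\QCat$, to construct the $\sU$-initial lifting by endowing $\bbX$ with the smallest $\CQ$-closure operator $c$ on $\PX$ that makes every $f_i$ continuous. I would immediately switch from closure operators to closure systems via the bijection between $\CQ$-closure operators on $\PX$ and $\CQ$-closure systems of $\PX$ (Propositions~\ref{closure_operator_adjoint_inclusion} and~\ref{closure_system_condition}), and use the characterization in Proposition~\ref{continuous_functor_condition}(iv), which translates continuity of $f_i : (\bbX,c) \to (\bbY_i, d_i)$ into the inclusion $f_i^{\la}(\sC(\bbY_i, d_i)) \subseteq \sC(\bbX,c)$. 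The goal then becomes the construction of the smallest $\CQ$-closure system of $\PX$ containing $\bigcup_{i \in I} f_i^{\la}(\sC(\bbY_i, d_i))$.

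First I would verify that an arbitrary (possibly large) intersection of $\CQ$-closure systems of $\PX$ is again a $\CQ$-closure system. By Proposition~\ref{closure_system_condition}(iv) this reduces to the observation that closure under cotensors and underlying meets in $\PX$ is stable under intersection. Since $\PX$ itself is trivially a $\CQ$-closure system containing the union above, the intersection of all such systems is well-defined and nonempty; I take this intersection to be $\sC(\bbX,c)$ and let $c$ be the associated $\CQ$-closure operator. By construction every $f_i : (\bbX, c) \to (\bbY_i, d_i)$ is continuous.

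For initiality, I would consider a $\CQ$-closure space $(\bbZ, e)$ and a $\CQ$-functor $g : \bbZ \to \bbX$ such that every composite $f_i g : (\bbZ, e) \to (\bbY_i, d_i)$ is continuous. Using $(f_i g)^{\la} = g^{\la} f_i^{\la}$ (since $(-)^{\la}$ is contravariant on $\CQ$-functors) together with Proposition~\ref{continuous_functor_condition}(iv), this yields $f_i^{\la}(\sC(\bbY_i, d_i)) \subseteq (g^{\la})^{-1}(\sC(\bbZ, e))$ for every $i$. Because $g^{\la}$ is a right adjoint in $\QCat$ (it is right adjoint to $g^{\ra}$), Proposition~\ref{la_tensor_preserving} ensures that it preserves cotensors and underlying meets; consequently the preimage $(g^{\la})^{-1}(\sC(\bbZ, e))$ is itself a $\CQ$-closure system of $\PX$. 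By the minimality of $\sC(\bbX,c)$, this preimage must contain $\sC(\bbX,c)$, which is exactly the continuity condition for $g : (\bbZ, e) \to (\bbX, c)$, concluding initiality.

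The only real obstacle, and the bulk of the careful work, consists in confirming the two permanence facts invoked above: that $\CQ$-closure systems of $\PX$ are stable under arbitrary intersection, and that the preimage under a right adjoint $\CQ$-functor such as $g^{\la}$ carries $\CQ$-closure systems to $\CQ$-closure systems. Both are clean consequences of Propositions~\ref{closure_system_condition}(iv) and~\ref{la_tensor_preserving}, after which the rest of the argument becomes essentially formal.
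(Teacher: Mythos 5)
Your proof is correct, and it arrives at the same initial structure as the paper, but presented on the dual side. The paper's proof simply writes down the closure operator $c=\bw_{i\in I}f_i^{\la}d_if_i^{\ra}$, i.e.\ the pointwise meet of the closure operators pulled back along the $f_i$, and asserts (without verification) that this is $\sU$-initial. Your construction — the smallest $\CQ$-closure system of $\PX$ containing $\bigcup_{i}f_i^{\la}(\sC(\bbY_i,d_i))$ — is exactly the fixed-point description of that same operator: since the pointwise meet of $\CQ$-closure operators is again a $\CQ$-closure operator and is the largest one below all of them, its fixed set is the closure system generated by the union of the individual fixed sets, and one checks directly that $\Fix(f_i^{\la}d_if_i^{\ra})=f_i^{\la}(\sC(\bbY_i,d_i))$. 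What your route buys is that the verification of initiality becomes transparent: continuity is read off from Proposition~\ref{continuous_functor_condition}(iv) as an inclusion of closure systems, and the two permanence facts you isolate (stability of closure systems under intersection, and under preimage along the right adjoint $g^{\la}$, both via Proposition~\ref{closure_system_condition}(iv) and Proposition~\ref{la_tensor_preserving}) do all the work; in particular $(f_ig)^{\la}=g^{\la}f_i^{\la}$ converts the hypothesis on the $f_ig$ into the statement that $(g^{\la})^{-1}(\sC(\bbZ,e))$ is a closure system containing the generators, whence it contains $\sC(\bbX,c)$ by minimality. The only item you leave implicit that the paper mentions is the (trivial) faithfulness of $\sU$, which is part of the definition of a topological functor; everything else is sound.
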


\begin{proof}
$\sU$ is obviously faithful. Given a (possibly large) family of $\CQ$-closure spaces $(\bbY_i,d_i)$ and $\CQ$-functors $f_i:\bbX\lra\bbY_i$ $(i\in I)$, we must find a $\CQ$-closure space $(\bbX,c)$ such that
\begin{itemize}
\item every $f_i:(\bbX,c)\lra(\bbY_i,d_i)$ is a continuous $\CQ$-functor, and
\item for every $\CQ$-closure space $(\bbZ,e)$, any $\CQ$-functor $g:\bbZ\lra\bbX$ becomes a continuous $\CQ$-functor $g:(\bbZ,e)\lra(\bbX,c)$ whenever all $f_i g:(\bbZ,e)\lra(\bbY_i,d_i)$ $(i\in I)$ are continuous $\CQ$-functors.
\end{itemize}
To this end, one simply defines $c=\displaystyle\bw_{i\in I}f_i^{\la}d_i f_i^{\ra}$, i.e., the meet of the composite $\CQ$-functors
$$\PX\to^{f_i^{\ra}}\PY_i\to^{d_i}\PY_i\to^{f_i^{\la}}\PX.$$
Then $c$ is the $\sU$-initial structure on $\bbX$ with respect to the $\sU$-structured source $(f_i:\bbX\lra\bbY_i)_{i\in I}$.
\end{proof}

In particular, $\sU$ and $\sU^{\op}$ both being topological implies that $\sU$ has a fully faithful left adjoint $\QCat\lra\QCatCls$ which provides a $\CQ$-category $\bbX$ with the \emph{discrete} $\CQ$-closure space $(\bbX,1_{\PX})$ (i.e., every $\mu\in\PX$ is closed), and a fully faithful right adjoint $\QCat\lra\QCatCls$ which endows $\bbX$ with the \emph{indiscrete} $\CQ$-closure space $(\bbX,\top_{\PX})$, where
$$\top_{\PX}(\mu)(x)=\top_{|x|,|\mu|}$$
for all $\mu\in\PX$ and $x\in\bbX_0$.

A locally small category $\CC$ is \emph{totally cocomplete} \cite{Borger1990} if each diagram $D:\CJ\lra\CC$ (here $\CJ$ is possibly large) has a colimit in $\CC$ whenever the colimit of $\CC(X,D-)$ exists in $\Set$ for all $X\in\ob\CC$. $\CC$ is totally cocomplete if and only if $\CC$ is \emph{total} \cite{Street1978}; that is, the Yoneda embedding $\CC\lra\Set^{\CC^{\op}}$ has a left adjoint. $\CC$ is \emph{totally complete} (or equivalently, \emph{cototal}) if $\CC^{\op}$ is totally cocomplete. Moreover, it is already known in category theory that
\begin{itemize}
\item if $U:\CE\lra\CB$ is a topological functor and $\CB$ is totally cocomplete, then so is $\CE$ (see \cite[Theorem 6.13]{Kelly1986});
\item $\QCat$ is a totally cocomplete and totally complete category (see \cite[Theorem 2.7]{Shen2015a}).
\end{itemize}
Thus we conclude:

\begin{cor} \label{QCatCls_total}
$\QCatCls$ is totally cocomplete and totally complete and, in particular, cocomplete and complete.
\end{cor}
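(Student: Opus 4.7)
The corollary is essentially a combination of two already-cited facts, and the plan is to orchestrate them rather than to do any construction by hand. The first fact is Proposition \ref{U_topological}, which gives that the forgetful functor $\sU:\QCatCls\lra\QCat$ is topological. The second is the pair of cited results: Kelly's theorem that a topological functor lifts total cocompleteness from the base, together with the fact (also cited) that $\QCat$ itself is both totally cocomplete and totally complete.

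First I would deduce total cocompleteness of $\QCatCls$ directly: since $\sU:\QCatCls\lra\QCat$ is topological by Proposition \ref{U_topological} and $\QCat$ is totally cocomplete, Kelly's theorem on topological functors immediately yields that $\QCatCls$ is totally cocomplete. For total completeness I would exploit the well-known self-duality of the notion of topological functor, namely that $\sU$ is topological if and only if $\sU^{\op}:\QCatCls^{\op}\lra\QCat^{\op}$ is topological. Since $\QCat$ is totally complete, $\QCat^{\op}$ is totally cocomplete, so by the same theorem applied to $\sU^{\op}$, the category $\QCatCls^{\op}$ is totally cocomplete, equivalently $\QCatCls$ is totally complete.

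Finally, since every totally cocomplete locally small category is in particular cocomplete (small colimits being a very special case of the general colimits that total cocompleteness delivers), and dually every totally complete category is complete, the ``in particular'' clause follows at once. There is no genuine obstacle here: all the machinery has been assembled in the preceding results and the references cited immediately before the statement. The only point that requires a tiny bit of care is citing the self-duality of topologicity (referenced in the proof of Proposition \ref{U_topological} via \cite[Theorem 21.9]{Adamek1990}), so as not to invoke it as if it were folklore without acknowledgement.
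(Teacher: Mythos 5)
Your proposal is correct and follows exactly the route the paper intends: Proposition \ref{U_topological} plus Kelly's lifting theorem gives total cocompleteness from that of $\QCat$, and the self-duality of topologicity (already noted before Proposition \ref{U_topological}) handles total completeness, with ordinary (co)completeness as a special case. No differences worth noting.
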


\subsection{$\CQ$-closure spaces with discrete underlying $\CQ$-categories}

By restricting the objects of $\QCatCls$ to those $\CQ$-closure spaces with discrete underlying $\CQ$-categories (i.e., $\CQ_0$-typed sets), we obtain a full subcategory of $\QCatCls$ and denote it by $\QCls$, whose morphisms are continuous type-preserving maps, or \emph{continuous maps} for short\footnote{Our notations here deviate from \cite{Shen2014,Shen2013a}, where $\QCls$ is in fact $\QCatCls$ in this paper, and our $\QCls$ here did not appear in \cite{Shen2014,Shen2013a}.}.

There is a natural functor $(-)_0:\QCatCls\lra\QCls$ that sends a $\CQ$-closure space $(\bbX,c)$ to $(\bbX_0,c_0)$ with
$$c_0:\PX_0\lra\PX_0,\quad \mu\mapsto c(\mu\circ\bbX).$$
To see the functoriality of $(-)_0$, first note that the $\CQ$-closure spaces $(\bbX,c)$, $(\bbX_0,c_0)$ have exactly the same closed presheaves:

\begin{prop} \label{cPX=c0PX0}
For each $\CQ$-closure space $(\bbX,c)$, $\sC(\bbX,c)=\sC(\bbX_0,c_0)$.
\end{prop}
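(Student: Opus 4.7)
The plan is to unfold the definitions and exploit the fact that the operation $\mu\mapsto\mu\circ\bbX$ is a reflection from $\PX_0$ onto $\PX$, so that the fixed-point sets of $c$ and $c_0$ are forced to coincide.

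First I would observe that since $\bbX_0$ is treated as a discrete $\CQ$-category with hom-arrows $\id_{\bbX_0}$, the distributor condition is vacuous and $\PX_0$ consists of \emph{all} $\CQ$-relations $\mu:\bbX_0\oto\{q\}$ (for various $q\in\CQ_0$). In contrast, $\PX$ consists only of those $\mu$ satisfying $\mu\circ\bbX=\mu$. Thus $\PX\subseteq\PX_0$ as collections of $\CQ$-relations, and since $\bbX\circ\bbX=\bbX$, the assignment $\mu\mapsto\mu\circ\bbX$ takes $\PX_0$ into $\PX$ and is the identity on $\PX$. In particular, for any $\mu\in\PX_0$, $c_0\mu=c(\mu\circ\bbX)$ lies in $\PX$ because $c$ maps into $\PX$.

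Next I would verify the two inclusions separately. For $\sC(\bbX,c)\subseteq\sC(\bbX_0,c_0)$: if $\mu\in\sC(\bbX,c)$, then $\mu\in\PX$ gives $\mu\circ\bbX=\mu$ and $c\mu=\mu$, hence
\[
c_0\mu=c(\mu\circ\bbX)=c\mu=\mu,
\]
so $\mu\in\sC(\bbX_0,c_0)$. For the reverse inclusion $\sC(\bbX_0,c_0)\subseteq\sC(\bbX,c)$: if $\mu\in\sC(\bbX_0,c_0)$, then $\mu=c_0\mu=c(\mu\circ\bbX)$ already lies in $\PX$ (by the observation above), so $\mu\circ\bbX=\mu$; substituting back yields $c\mu=c(\mu\circ\bbX)=c_0\mu=\mu$, i.e.\ $\mu\in\sC(\bbX,c)$.

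There is no real obstacle here: once one notices that $c_0$ factors through $\PX$, both equalities collapse to the identity $c(\mu\circ\bbX)=\mu$, which simultaneously certifies that $\mu\in\PX$ and that it is $c$-closed. The only minor bookkeeping is a sanity check (which I would mention in passing but not write out in detail) that $c_0$ is indeed a $\CQ$-closure operator on $\PX_0$: extensivity follows from $\id_{\bbX_0}\leq\bbX$ combined with $1_{\PX}\leq c$, idempotency from $c_0c_0\mu=c(c(\mu\circ\bbX)\circ\bbX)=cc(\mu\circ\bbX)=c(\mu\circ\bbX)=c_0\mu$ (using that $c(\mu\circ\bbX)\in\PX$), and $\CQ$-functoriality from that of $c$ and of $(-)\circ\bbX:\PX_0\lra\PX$.
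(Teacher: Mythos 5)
Your proof is correct and follows essentially the same route as the paper: everything reduces to showing that a $c_0$-closed $\mu$ already satisfies $\mu\circ\bbX=\mu$ and hence lies in $\PX$, where $c_0$ and $c$ visibly agree. The only (harmless) difference is how you certify $\mu\in\PX$ — you note that $\mu=c(\mu\circ\bbX)$ lies in the image of $c$, whose codomain is $\PX$, whereas the paper sandwiches $\mu\circ\bbX\leq c(\mu\circ\bbX)=c_0\mu=\mu\leq\mu\circ\bbX$; both are immediate.
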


\begin{proof}
It suffices to show that $\mu\in\sC(\bbX_0,c_0)$ implies $\mu\in\PX$. Suppose $\mu:\bbX_0\oto\{|\mu|\}$ satisfies $c_0\mu=\mu$, then
$$\mu\circ\bbX\leq c(\mu\circ\bbX)=c_0\mu=\mu,$$
and thus $\mu\circ\bbX=\mu$ since the reverse inequality is trivial. Therefore, $\mu$ is a $\CQ$-distributor $\bbX\oto\{|\mu|\}$ (see Equation (\ref{distributor_def}) for the definition), i.e., $\mu\in\PX$.
\end{proof}

Consequently, the functoriality of $(-)_0:\QCatCls\lra\QCls$ follows from the above observation and Proposition \ref{continuous_functor_condition}(iv), i.e., the continuity of a $\CQ$-functor $f:(\bbX,c)\lra(\bbY,d)$ implies the continuity of its underlying type-preserving map $f:(\bbX_0,c_0)\lra(\bbY_0,d_0)$.

\begin{rem}
Although $(\bbX,c)$ and $(\bbX_0,c_0)$ have the same closed presheaves, in general they are not isomorphic objects in the category $\QCatCls$ if $\bbX$ is non-discrete.
\end{rem}

\begin{prop} \label{QCatCls_QCls}
$\QCls$ is a coreflective subcategory of $\QCatCls$.
\end{prop}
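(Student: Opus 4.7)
The plan is to verify that the functor $(-)_0:\QCatCls\lra\QCls$ defined earlier in the excerpt is a right adjoint to the inclusion $i:\QCls\hookrightarrow\QCatCls$; this is exactly the assertion that $\QCls$ is a coreflective subcategory. I would produce the adjunction via an explicit counit together with the corresponding universal property.

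For the counit, at each $(\bbX,c)\in\ob(\QCatCls)$ I would take $\epsilon_{(\bbX,c)}: i((\bbX_0,c_0))\lra(\bbX,c)$ to be the $\CQ$-functor whose underlying map is the identity on $\bbX_0$. Since $\bbX_0$ is discrete, this is automatically a $\CQ$-functor into $\bbX$ by reflexivity of the hom-arrows of $\bbX$, and its continuity follows from
$$\id^{\ra}(c_0\mu)=c_0\mu\circ\bbX=c(\mu\circ\bbX)\circ\bbX=c(\mu\circ\bbX)=c\cdot\id^{\ra}(\mu),$$
where the third equality uses that $c(\mu\circ\bbX)\in\PX$ and is therefore fixed under right-composition with $\bbX$.

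For the universal property, I would take $(\bbY_0,d_0)\in\ob(\QCls)$ and a continuous $\CQ$-functor $f:i((\bbY_0,d_0))\lra(\bbX,c)$, and show that the unique lift $f_0:(\bbY_0,d_0)\lra(\bbX_0,c_0)$ in $\QCls$ satisfying $\epsilon_{(\bbX,c)}\circ i(f_0)=f$ is the underlying type-preserving map of $f$ itself --- which, since $\bbY_0$ is discrete, is automatically a $\CQ$-functor between the discrete categories $\bbY_0$ and $\bbX_0$. The substantive point is to verify that $f$ is continuous in $\QCatCls$ if and only if $f_0$ is continuous in $\QCls$. For this I would invoke Proposition \ref{continuous_functor_condition}(iv): $f$ (resp.\ $f_0$) is continuous precisely when $f^{\la}\lam\in\sC(\bbY_0,d_0)$ (resp.\ $f_0^{\la}\lam\in\sC(\bbY_0,d_0)$) for every $\lam\in\sC(\bbX,c)$ (resp.\ every $\lam\in\sC(\bbX_0,c_0)$); by Proposition \ref{cPX=c0PX0} these two families of closed presheaves coincide, and by formula (\ref{fla_lam}) both $f^{\la}\lam$ and $f_0^{\la}\lam$ are given by $y\mapsto\lam(fy)$, and hence agree as $\CQ$-relations on $\bbY_0$. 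The two continuity conditions are thus identical, producing the required natural bijection between $\QCatCls(i((\bbY_0,d_0)),(\bbX,c))$ and $\QCls((\bbY_0,d_0),(\bbX_0,c_0))$.

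The main subtlety is precisely this last comparison of $f^{\la}$ and $f_0^{\la}$, which nominally live in different ambient quantaloids ($\PX$ versus $\sP\bbX_0$); the argument rests on Proposition \ref{cPX=c0PX0}, which shows that the closed presheaves of $(\bbX,c)$ and $(\bbX_0,c_0)$ coincide as subsets of $\sP\bbX_0$. Naturality of the bijection in both variables is then immediate, since it is the identity on the level of underlying type-preserving maps.
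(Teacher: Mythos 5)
Your proposal is correct and follows essentially the same route as the paper: the paper's proof is exactly the natural hom-set bijection $\QCatCls(i(X,c),(\bbY,d))\cong\QCls((X,c),(\bbY_0,d_0))$ obtained from Propositions \ref{continuous_functor_condition}(iv) and \ref{cPX=c0PX0}, which is what your counit-plus-universal-property argument unpacks. Your explicit verification that the identity-on-objects counit is continuous, and the comparison of $f^{\la}$ with $f_0^{\la}$ via Equation (\ref{fla_lam}), are just the details the paper leaves implicit.
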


\begin{proof}
For all $(X,c)\in\ob(\QCls)$ and $(\bbY,d)\in\ob(\QCatCls)$, by Propositions \ref{continuous_functor_condition}(iv) and \ref{cPX=c0PX0} one soon has
$$\QCatCls((X,c),(\bbY,d))\cong\QCls((X,c),(\bbY_0,d_0)).$$
Hence, $(-)_0$ is right adjoint to the inclusion functor $\QCls\ \to/^(->/\QCatCls$.
\end{proof}

Since $\Set\da\CQ_0$ is a totally cocomplete and totally complete category\footnote{The total (co)completeness of $\Set\da T$ for any set $T$ follows from its (co)completeness, (co)wellpoweredness and the existence of a (co)generating set (see \cite[Corollary 3.5]{Borger1990}).}, by replacing every $\CQ$-category in the proof of Proposition \ref{U_topological} with discrete ones and repeating the reasoning for Corollary \ref{QCatCls_total}, one immediately deduces that $\QCls$ is also a well-behaved category as $\QCatCls$:

\begin{prop} \label{QCls_total}
The forgetful functor $\QCls\lra\Set\da\CQ_0$ is topological. Therefore, $\QCls$ is totally cocomplete and totally complete and, in particular, cocomplete and complete.
\end{prop}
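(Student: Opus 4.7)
The proof is meant to follow the pattern of Proposition~\ref{U_topological} and Corollary~\ref{QCatCls_total} verbatim, with every $\CQ$-category replaced by a discrete one. My plan is therefore to establish topologicity of the forgetful functor $\sU_0:\QCls\lra\Set\da\CQ_0$ by an explicit construction of $\sU_0$-initial structures, and then to invoke the same general transfer principles used in Corollary~\ref{QCatCls_total}.

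The first step is to observe that $\sU_0$ is faithful (a continuous map is determined by its type-preserving underlying map), and then to construct initial liftings. Given a (possibly large) $\sU_0$-structured source $(f_i:X\lra Y_i)_{i\in I}$ with $X\in\Set\da\CQ_0$ and each $(Y_i,d_i)\in\QCls$, every $f_i$ is automatically a $\CQ$-functor between discrete $\CQ$-categories, so the presheaf constructions $f_i^{\ra}\dv f_i^{\la}$ make sense and I define
$$c:=\bw_{i\in I} f_i^{\la}\,d_i\,f_i^{\ra}:\PX\lra\PX,$$
the meet taken in the complete $\CQ$-category $\PX$ (cf.\ Example~\ref{PX_tensor_complete}), which is a $\CQ$-functor. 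The candidate $c$ must be shown to be a $\CQ$-closure operator: extensivity $1_{\PX}\leq c$ follows from $1_{\PX}\leq f_i^{\la}f_i^{\ra}\leq f_i^{\la}d_i f_i^{\ra}$ for each $i$, and idempotency $cc\leq c$ follows by the same manipulation as in Proposition~\ref{U_topological}, namely from $c\leq f_i^{\la}d_i f_i^{\ra}$ together with $f_i^{\ra}f_i^{\la}\leq 1$ and $d_i d_i=d_i$ one obtains $f_i^{\ra}c\leq d_i f_i^{\ra}$, hence $f_i^{\la}d_i f_i^{\ra}\,c\leq f_i^{\la}d_i f_i^{\ra}$, and the meet over $i$ yields $cc\leq c$. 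The computation $f_i^{\ra}c\leq d_i f_i^{\ra}$ simultaneously shows that each $f_i:(X,c)\lra(Y_i,d_i)$ is continuous.

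Next I verify the universal property. Given a continuous map $g:(Z,e)\lra(Z',\text{-})$ landing on $X$ such that every composite $f_i g:(Z,e)\lra(Y_i,d_i)$ is continuous, using functoriality of $(-)^{\ra}$ I rewrite $(f_i g)^{\ra}=f_i^{\ra}g^{\ra}$ and apply $f_i^{\la}$ to the inequality $f_i^{\ra}g^{\ra}e\leq d_i f_i^{\ra}g^{\ra}$, combined with $1_{\PZ}\leq f_i^{\la}f_i^{\ra}$ on the left, to obtain $g^{\ra}e\leq f_i^{\la}d_i f_i^{\ra}g^{\ra}$ for every $i$; taking the meet over $i$ gives $g^{\ra}e\leq c\,g^{\ra}$, i.e.\ $g:(Z,e)\lra(X,c)$ is continuous. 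This establishes topologicity of $\sU_0$.

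The total (co)completeness assertion is then purely formal. The slice category $\Set\da\CQ_0$ is totally cocomplete and totally complete, as noted in the footnote to Proposition~\ref{QCls_total}. Since topological functors lift total cocompleteness (Kelly's theorem, already cited in the proof of Corollary~\ref{QCatCls_total}), and since a functor is topological iff its opposite is, both properties transfer from $\Set\da\CQ_0$ to $\QCls$ via $\sU_0$. In particular, $\QCls$ is cocomplete and complete. No step poses a real obstacle: every computation parallels the corresponding one in the proof of Proposition~\ref{U_topological}, and the only thing worth checking carefully is that the idempotency calculation for $c$ still goes through when the underlying $\CQ$-categories are required to be discrete, which it does since the argument only uses the Kan-adjunction identities $f^{\ra}\dv f^{\la}$ and the fixed-point property of each $d_i$.
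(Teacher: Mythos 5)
Your proof is correct and follows exactly the route the paper takes: the paper's own proof of Proposition~\ref{QCls_total} simply says to replace every $\CQ$-category in the proof of Proposition~\ref{U_topological} by a discrete one and repeat the reasoning of Corollary~\ref{QCatCls_total}, and your construction of the initial structure $c=\bw_{i\in I}f_i^{\la}d_if_i^{\ra}$ together with the adjunction computations and the Kelly-type transfer is precisely that argument, with the details the paper leaves implicit written out (and verified correctly).
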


\subsection{Specialization $\CQ$-categories and $\CQ$-Alexandrov spaces} \label{Specialization_QCat}

For all $\CQ$-categories $\bbX$, $\bbY$, the adjunction $(-)^{\nat}\dv(-)^*$ in Proposition \ref{star_graph_adjoint} gives rise to an isomorphism
$$\QDist(\bbX,\bbY)\cong\QCat(\bbY,\PX).$$
Explicitly, each $\CQ$-distributor $\phi:\bbX\oto\bbY$ has a \emph{transpose}
\begin{equation} \label{tphi_def}
\tphi:\bbY\lra\PX,\quad\tphi y=\phi(-,y);
\end{equation}
and correspondingly, the transpose of each $\CQ$-functor $f:\bbY\lra\PX$ is given by
\begin{equation} \label{tf_def}
\tf:\bbX\oto\bbY,\quad \tf(x,y)=(fy)(x).
\end{equation}

Now let $X$ be a $\CQ_0$-typed set and $(X,c)$ a $\CQ$-closure space, the $\CQ$-closure operator $c:\sP X\lra\sP X$ has a transpose
$$\tc:X\oto\sP X.$$

\begin{lem} \label{tc_rda_tc}
The $\CQ$-relation $\tc\rda\tc:X\oto X$ may be calculated as
$$(\tc\rda\tc)(x,y)=\bw_{\mu\in\sC(X,c)}\mu(y)\rda\mu(x)$$
for all $x,y\in X$.
\end{lem}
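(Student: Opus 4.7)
The plan is to unwind the definition of the transpose, apply the formula for right implication in $\QRel$, and then exploit the idempotency of $c$ to reduce the indexing set from all of $\sP X$ down to $\sC(X,c)$.

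First I would compute $\tc$ explicitly. Since $X$ is a discrete $\CQ$-category, we have $\PX = \sP X$, so $c:\sP X\lra\sP X$ may be viewed as a $\CQ$-functor $\bbY\lra\PX$ with $\bbY=\sP X$ and $\bbX=X$. Applying formula (\ref{tf_def}) gives
$$\tc:X\oto\sP X,\qquad \tc(x,\mu)=(c\mu)(x)$$
for all $x\in X$ and $\mu\in\sP X$. The next step is the formula for right implication in the quantaloid $\QRel$, which yields
$$(\tc\rda\tc)(x,y)=\bw_{\mu\in\sP X}\tc(y,\mu)\rda\tc(x,\mu)=\bw_{\mu\in\sP X}(c\mu)(y)\rda(c\mu)(x).$$

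It remains to replace the indexing set $\sP X$ by $\sC(X,c)$. For the inequality ``$\leq$'', note that $\sC(X,c)\subseteq\sP X$ and that every $\mu\in\sC(X,c)$ satisfies $c\mu=\mu$, so each term of the right-hand side of the claimed equality already appears among the terms of $(\tc\rda\tc)(x,y)$. For the reverse inequality ``$\geq$'', the key observation is that $c$ is a $\CQ$-closure operator on the separated $\CQ$-category $\sP X$, hence idempotent, so $c\mu\in\sC(X,c)$ for every $\mu\in\sP X$. This means that as $\mu$ ranges over $\sP X$, the element $c\mu$ ranges inside $\sC(X,c)$, and consequently every term $(c\mu)(y)\rda(c\mu)(x)$ occurring in $(\tc\rda\tc)(x,y)$ is one of the terms $\nu(y)\rda\nu(x)$ with $\nu=c\mu\in\sC(X,c)$. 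Taking the meet then gives the desired inequality.

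There is essentially no obstacle here; the argument is a direct unfolding of definitions once one has identified $\tc$ correctly. The only subtlety to highlight is that the reduction from $\sP X$ to $\sC(X,c)$ in the indexing set is permitted precisely because $c$ is idempotent, so the map $c:\sP X\lra\sC(X,c)$ is surjective onto the set of closed presheaves.
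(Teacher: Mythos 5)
Your proof is correct and follows essentially the same route as the paper: unwind $\tc(x,\mu)=(c\mu)(x)$ via the transpose formula, apply the right-implication formula in $\QRel$ to get the meet over all $\mu\in\sP X$ of $(c\mu)(y)\rda(c\mu)(x)$, and then use idempotency of $c$ to identify this with the meet over $\sC(X,c)$. The paper compresses the last step into a single equality; your two-inequality justification just makes explicit what the paper leaves to the reader.
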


\begin{proof}
This is easy since
$$(\tc\rda\tc)(x,y)=\tc(y,-)\rda\tc(x,-)=\bw_{\mu\in\sP X}(c\mu)(y)\rda(c\mu)(x)=\bw_{\mu\in\sC(X,c)}\mu(y)\rda\mu(x).$$
\end{proof}

The $\CQ$-relation $\tc\rda\tc$ on $X$ defines a $\CQ$-category $(X,\tc\rda\tc)$, which is functorial from $\QCls$ to $\QCat$:

\begin{prop}
The map $(X,c)\mapsto(X,\tc\rda\tc)$ defines a functor $\sS:\QCls\lra\QCat$.
\end{prop}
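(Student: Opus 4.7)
My plan is to verify two things: that $(X,\tc\rda\tc)$ really is a $\CQ$-category (i.e. that $\tc\rda\tc$ is reflexive and transitive as a $\CQ$-relation on $X$), and that every continuous map between $\CQ$-closure spaces becomes a $\CQ$-functor between the associated specialization $\CQ$-categories. Preservation of identities and composites is automatic because $\sS$ does not change underlying typed sets or set-maps, so the functoriality reduces to these two checks.

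For the $\CQ$-category condition I will work directly from the formula in Lemma \ref{tc_rda_tc}, namely $(\tc\rda\tc)(x,y)=\bw_{\mu\in\sC(X,c)}\mu(y)\rda\mu(x)$. Reflexivity $1_{|x|}\leq(\tc\rda\tc)(x,x)$ follows immediately from the adjunction defining $\rda$: for each closed $\mu$, $\mu(x)\circ 1_{|x|}=\mu(x)$ forces $1_{|x|}\leq\mu(x)\rda\mu(x)$. For transitivity I fix $x,y,z\in X$ and a closed presheaf $\mu$, and use the counit-style inequalities $\mu(z)\circ(\mu(z)\rda\mu(y))\leq\mu(y)$ and $\mu(y)\circ(\mu(y)\rda\mu(x))\leq\mu(x)$ to deduce
\[(\mu(z)\rda\mu(y))\circ(\mu(y)\rda\mu(x))\leq\mu(z)\rda\mu(x).\]
Intersecting over $\mu\in\sC(X,c)$ and using that composition in $\CQ$ preserves meets on each side (since it preserves joins and the meets here are computed in a complete lattice, this step requires a brief justification by monotonicity instead of preservation), I get $(\tc\rda\tc)(y,z)\circ(\tc\rda\tc)(x,y)\leq(\tc\rda\tc)(x,z)$.

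For morphisms, given a continuous map $f:(X,c)\lra(Y,d)$ in $\QCls$, I will invoke Proposition \ref{continuous_functor_condition}(iv), which says that $f^{\la}\lam\in\sC(X,c)$ for every $\lam\in\sC(Y,d)$, together with the pointwise formula (\ref{fla_lam}) giving $(f^{\la}\lam)(x)=\lam(fx)$. Since $f$ is type-preserving, for any $x,x'\in X$ the family $\{f^{\la}\lam\mid\lam\in\sC(Y,d)\}$ is a subfamily of $\sC(X,c)$, and therefore
\[(\tc\rda\tc)(x,x')=\bw_{\mu\in\sC(X,c)}\mu(x')\rda\mu(x)\leq\bw_{\lam\in\sC(Y,d)}\lam(fx')\rda\lam(fx)=(\td\rda\td)(fx,fx').\]
This says precisely that $f:(X,\tc\rda\tc)\lra(Y,\td\rda\td)$ is a $\CQ$-functor, so $\sS$ is well defined on morphisms; identities and composites are preserved on the nose.

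There is no real obstacle: the construction $c\mapsto\tc\rda\tc$ is essentially a meet over closed presheaves of their ``pointwise right implication'', and both properties to check are one-line consequences of the defining adjunctions of $\rda$ together with the characterization of continuity in terms of closed presheaves. The only place one must be slightly careful is tracking types (so that $\mu(y)\rda\mu(x)$ lives in $\CQ(|x|,|y|)$) and using the type-preserving nature of $f$ when transporting the inequality across $f$.
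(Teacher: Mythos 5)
Your proposal is correct and follows essentially the same route as the paper: the morphism part (meet over $\sC(X,c)$ restricted to the subfamily $\{f^{\la}\lam\mid\lam\in\sC(Y,d)\}$ via Proposition \ref{continuous_functor_condition}(iv) and Equation (\ref{fla_lam})) is exactly the paper's computation, and the reflexivity/transitivity check is the verification the paper compresses into ``$(X,\tc\rda\tc)$ is obviously a $\CQ$-category.'' Your self-correction that transitivity needs only \emph{monotonicity} of composition (fixing a single closed $\mu$ before taking the meet), not meet-preservation, is the right way to close that step.
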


\begin{proof}
$(X,\tc\rda\tc)$ is obviously a $\CQ$-category. Now let $f:(X,c)\lra(Y,d)$ be a continuous map in $\QCls$, we show that $f:(X,\tc\rda\tc)\lra(Y,\td\rda\td)$ is a $\CQ$-functor. Indeed, for all $x,x'\in X$,
\begin{align*}
(\tc\rda\tc)(x,x')&=\bw_{\mu\in\sC(X,c)}\mu(x')\rda\mu(x)&(\text{Lemma \ref{tc_rda_tc}})\\
&\leq\bw_{\lam\in\sC(Y,d)}(f^{\la}\lam)(x')\rda(f^{\la}\lam)(x)&(\text{Proposition \ref{continuous_functor_condition}(iv)})\\
&=\bw_{\lam\in\sC(Y,d)}\lam(fx')\rda\lam(fx)&(\text{Equation (\ref{fla_lam})})\\
&=(\td\rda\td)(fx,fx'),&(\text{Lemma \ref{tc_rda_tc}})
\end{align*}
as desired.
\end{proof}

We call $(X,\tc\rda\tc)$ the \emph{specialization $\CQ$-category}\footnote{The author is indebted to Professor Dexue Zhang for enlightening suggestions on the definition of specialization $\CQ$-categories.} of a $\CQ$-closure space $(X,c)$. The intuition of this term is from the specialization (pre)order:

\begin{exmp}
For a closure space $(X,c)$ with $X$ a crisp set and $c$ a closure operator on ${\bf 2}^X$, the specialization (pre)order on $X$ is given by
$$x\leq y\iff x\in c\{y\}.$$
Now consider $c$ as a relation $\tc:X\oto{\bf 2}^X$ (i.e., $\tc\subseteq X\times {\bf 2}^X$), the implication $\tc\rda\tc$ in the quantaloid $\Rel$ (as a special case of the implication in $\QRel$) is exactly
$$\tc\rda\tc=\{(x,y)\in X\times X\mid\forall A\in{\bf 2}^X:\ y\in c(A){}\Lra{}x\in c(A)\}.$$
Since it is easy to check $(x,y)\in\tc\rda\tc\iff x\in c\{y\}$, it follows that when $\CQ={\bf 2}$, our definition of specialization ${\bf 2}$-categories coincides with the notion of specialization order on the set of points of a closure space.
\end{exmp}

Conversely, let $\bbX$ be any $\CQ$-category. The $\CQ$-relation
$$\bbX:\bbX_0\oto\bbX_0$$
on $\bbX_0$ generates a $\CQ$-functor $\bbX^*:\PX_0\lra\PX_0$, which gives rise to a $\CQ$-closure space $(\bbX_0,\bbX^*)$. Intuitively, $\bbX^*$ turns any $\mu\in\PX_0$ into a presheaf $\mu\circ\bbX$ on $\bbX$; that is, $\sC(\bbX_0,\bbX^*)=\PX$.

\begin{prop} \label{D_fully_faithful}
Let $\bbX$, $\bbY$ be $\CQ$-categories and $f:\bbX_0\lra\bbY_0$ a type-preserving map. Then $f:\bbX\lra\bbY$ is a $\CQ$-functor if, and only if, $f:(\bbX_0,\bbX^*)\lra(\bbY_0,\bbY^*)$ is a continuous map.
\end{prop}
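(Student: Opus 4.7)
My plan is to reduce the equivalence to Proposition \ref{continuous_functor_condition}(iv) applied to the discrete $\CQ$-categories $\bbX_0$ and $\bbY_0$. That result recasts continuity of $f:(\bbX_0,\bbX^*)\lra(\bbY_0,\bbY^*)$ as the condition that $f^{\la}$ sends closed presheaves to closed presheaves. The key preliminary identification is $\sC(\bbX_0,\bbX^*)=\PX$: since $\bbX^*\mu=\mu\circ\bbX$, a presheaf $\mu\in\sP\bbX_0$ is fixed by $\bbX^*$ exactly when $\mu\circ\bbX=\mu$, which is the defining condition (\ref{distributor_def}) for $\mu\in\PX$ (and similarly for $\bbY$). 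After this identification, continuity becomes the requirement that for every $\lam\in\PY$, the element $f^{\la}\lam\in\sP\bbX_0$, given by $(f^{\la}\lam)(x)=\lam(fx)$ by (\ref{fla_lam}), in fact lies in $\PX$.

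For the ``only if'' direction I would assume the $\CQ$-functoriality inequality $\bbX(x,x')\leq\bbY(fx,fx')$ and verify $(f^{\la}\lam)\circ\bbX=f^{\la}\lam$ by a one-line computation: for any $x\in\bbX_0$,
\[
\bv_{x'\in\bbX_0}\lam(fx')\circ\bbX(x,x')\leq\bv_{x'\in\bbX_0}\lam(fx')\circ\bbY(fx,fx')\leq\lam(fx),
\]
where the second inequality uses $\lam\circ\bbY=\lam$, and the reverse inequality is automatic from reflexivity of $\bbX$. For the ``if'' direction I would test continuity against representable presheaves: for each $x'\in\bbX_0$ the Yoneda lemma (Lemma \ref{Yoneda_lemma}) guarantees $\sy_{\bbY}(fx')=\bbY(-,fx')\in\PY$, hence by hypothesis $f^{\la}\sy_{\bbY}(fx')\in\PX$. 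Evaluating the equality $(f^{\la}\sy_{\bbY}(fx'))\circ\bbX=f^{\la}\sy_{\bbY}(fx')$ at the entry $x$ and using $1_{|x'|}\leq\bbY(fx',fx')$ yields $\bbY(fx,fx')\geq\bbY(fx',fx')\circ\bbX(x,x')\geq\bbX(x,x')$, so $f$ is a $\CQ$-functor.

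I do not foresee any genuine obstacle; the argument is purely formal once the identification $\sC(\bbZ_0,\bbZ^*)=\sP\bbZ$ is in hand. The only notational care needed is that the symbol $f^{\la}$ here refers to $f$ viewed as a type-preserving map between the discrete $\CQ$-categories $\bbX_0$ and $\bbY_0$; the explicit formula $(f^{\la}\lam)(x)=\lam(fx)$ from (\ref{fla_lam}) removes any ambiguity and turns both implications into short computations.
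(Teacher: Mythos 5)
Your proof is correct and follows essentially the same route as the paper's: the identification $\sC(\bbZ_0,\bbZ^*)=\sP\bbZ$ together with Proposition \ref{continuous_functor_condition}(iv), a direct verification of $(f^{\la}\lam)\circ\bbX=f^{\la}\lam$ for necessity, and testing continuity against the representables $\sy_{\bbY}fx'$ for sufficiency. The paper merely leaves the (easy) necessity implicit, which you have usefully spelled out.
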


\begin{proof}
The necessity is easy. For the sufficiency, note that for all $x,x'\in\bbX_0$, $\sy_{\bbY}fx'\in\PY$ implies $f^{\la}\sy_{\bbY}fx'\in\PX$ since $f:(\bbX_0,\bbX^*)\lra(\bbY_0,\bbY^*)$ is continuous (see Proposition \ref{continuous_functor_condition}(iv)); consequently
\begin{align*}
\bbX(x,x')&\leq\bbY(fx',fx')\circ\bbX(x,x')\\
&=(\sy_{\bbY}fx')(fx')\circ\bbX(x,x')\\
&=(f^{\la}\sy_{\bbY}fx')(x')\circ\bbX(x,x')&(\text{Equation (\ref{fla_lam})})\\
&\leq(f^{\la}\sy_{\bbY}fx')(x)&(f^{\la}\sy_{\bbY}fx'\in\PX)\\
&=(\sy_{\bbY}fx')(fx)&(\text{Equation (\ref{fla_lam})})\\
&=\bbY(fx,fx'),
\end{align*}
indicating that $f:\bbX\lra\bbY$ is a $\CQ$-functor.
\end{proof}

Proposition \ref{D_fully_faithful} shows that the assignment $\bbX\mapsto(\bbX_0,\bbX^*)$ defines a fully faithful functor
$$\sD:\QCat\lra\QCls$$
which embeds $\QCat$ in $\QCls$ as a full subcategory. This embedding is coreflective and $\sS$ is the coreflector:

\begin{thm} \label{D_S_adjoint}
$\sD:\QCat\lra\QCls$ is a left adjoint and right inverse of $\sS:\QCls\lra\QCat$.
\end{thm}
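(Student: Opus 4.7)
The plan is to establish the two claims separately: first $\sS\sD=\id_{\QCat}$, then the adjunction $\sD\dv\sS$, both of which reduce to routine unpacking once one identifies the key identity $\bw_{\mu\in\PX}\mu(y)\rda\mu(x)=\bbX(x,y)$.

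For the right-inverse property, I would fix a $\CQ$-category $\bbX$ and compute $\sS\sD\bbX=(\bbX_0,\widetilde{\bbX^*}\rda\widetilde{\bbX^*})$. The paragraph preceding Proposition \ref{D_fully_faithful} already records that $\sC(\bbX_0,\bbX^*)=\PX$, so Lemma \ref{tc_rda_tc} gives
$$(\widetilde{\bbX^*}\rda\widetilde{\bbX^*})(x,y)=\bw_{\mu\in\PX}\mu(y)\rda\mu(x).$$
The inequality $\leq\bbX(x,y)$ follows by instantiating $\mu=\sy y$: since $\bbX(y,y)\geq 1_{|y|}$ and $\rda$ is antitone in its first argument, $\bbX(y,y)\rda\bbX(x,y)\leq 1_{|y|}\rda\bbX(x,y)=\bbX(x,y)$. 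The reverse inequality follows because every presheaf $\mu\in\PX$ satisfies $\mu\circ\bbX=\mu$, in particular $\mu(y)\circ\bbX(x,y)\leq\mu(x)$, so $\bbX(x,y)\leq\mu(y)\rda\mu(x)$ for every $\mu$. Hence $\sS\sD\bbX=\bbX$ as $\CQ$-categories; on morphisms both $\sD$ and $\sS$ leave the underlying type-preserving map untouched, so $\sS\sD f=f$.

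For the adjunction, I would show directly that for every $\bbX\in\ob(\QCat)$ and $(Y,c)\in\ob(\QCls)$ the two hom-sets $\QCls(\sD\bbX,(Y,c))$ and $\QCat(\bbX,\sS(Y,c))$ consist of the same type-preserving maps $f:\bbX_0\lra Y$, so the bijection is the identity. By Proposition \ref{continuous_functor_condition}(iv), $f:(\bbX_0,\bbX^*)\lra(Y,c)$ is continuous iff $f^{\la}\lam\in\sC(\bbX_0,\bbX^*)=\PX$ for every $\lam\in\sC(Y,c)$. Using equation (\ref{fla_lam}), $(f^{\la}\lam)(x)=\lam(fx)$, and the condition $f^{\la}\lam\in\PX$ unpacks to $\lam(fx')\circ\bbX(x,x')\leq\lam(fx)$, i.e.
$$\bbX(x,x')\leq\lam(fx')\rda\lam(fx)$$
for all $x,x'\in\bbX_0$ and all $\lam\in\sC(Y,c)$. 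Taking the infimum over $\lam$ and invoking Lemma \ref{tc_rda_tc}, this is precisely $\bbX(x,x')\leq(\tc\rda\tc)(fx,fx')$, which is the statement that $f:\bbX\lra\sS(Y,c)$ is a $\CQ$-functor. Naturality of the bijection in both variables is automatic because both $\sD$ and $\sS$ act as the identity on underlying maps.

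The main obstacle is the first claim, specifically the identity $\bw_{\mu\in\PX}\mu(y)\rda\mu(x)=\bbX(x,y)$; once that is in hand, the adjunction is essentially the general-target version of Proposition \ref{D_fully_faithful}, proved along the same lines via Proposition \ref{continuous_functor_condition}(iv) and Lemma \ref{tc_rda_tc}. No new ingredients beyond the Yoneda lemma, the presheaf condition, and these two results are needed.
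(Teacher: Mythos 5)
Your proof is correct, and for the adjunction it takes a genuinely different (and arguably cleaner) route than the paper. The paper exhibits the unit $1_{\bbX}:\bbX\lra\sS\sD\bbX$ and counit $1_X:\sD\sS(X,c)\lra(X,c)$, verifies that the latter is a continuous map by the computation $(\tc\rda\tc)^*\mu\leq c\mu$, and then appeals to the triangle identities; you instead identify the two hom-sets $\QCls(\sD\bbX,(Y,c))$ and $\QCat(\bbX,\sS(Y,c))$ outright, by unwinding Proposition \ref{continuous_functor_condition}(iv) through Equation (\ref{fla_lam}) and Lemma \ref{tc_rda_tc} into the single chain of equivalences $f^{\la}\lam\in\PX\ (\forall\lam)\iff\bbX(x,x')\leq(\tc\rda\tc)(fx,fx')\ (\forall x,x')$. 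Since the resulting bijection is the identity on underlying type-preserving maps, naturality and the triangle identities come for free, so your argument packages the counit's continuity and the coherence checks into one step; it also makes immediately visible that $\sD$ is a full embedding with $\sS$ as coreflector. A further small point in your favour: the paper simply asserts the identity $\bbX(x,y)=\bw_{\mu\in\PX}\mu(y)\rda\mu(x)$, whereas you prove both inequalities (instantiating $\mu=\sy y$ for one direction and using $\mu\circ\bbX=\mu$ for the other), which is exactly the Yoneda-type argument the paper leaves implicit. Both approaches rest on the same two ingredients, Lemma \ref{tc_rda_tc} and Proposition \ref{continuous_functor_condition}(iv), so the difference is one of packaging rather than substance, but your packaging is the more economical of the two.
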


\begin{proof}
For each $\CQ$-category $\bbX$, one asserts that $\bbX=(\bbX_0,\widetilde{\bbX^*}\rda\widetilde{\bbX^*})=\sS\sD\bbX$ since
$$\bbX(x,y)=\bw_{\mu\in\PX}\mu(y)\rda\mu(x)=\bw_{\mu\in\sC(\bbX_0,\bbX^*)}\mu(y)\rda\mu(x)=(\widetilde{\bbX^*}\rda\widetilde{\bbX^*})(x,y)$$
for all $x,y\in\bbX_0$, where the last equality follows from Lemma \ref{tc_rda_tc}. Conversely, for a $\CQ$-closure space $(X,c)$, by definition one has $\sD\sS(X,c)=(X,(\tc\rda\tc)^*)$. Note that for all $\mu\in\sP X$,
\begin{align*}
(\tc\rda\tc)^*\mu&\leq(c\mu)(\tc\rda\tc)&(1_{\sP X}\leq c)\\
&=\tc(-,\mu)\circ(\tc\rda\tc)&(\text{Equation (\ref{tf_def})})\\
&=(\tc\circ(\tc\rda\tc))(-,\mu)\\
&\leq\tc(-,\mu)\\
&=c\mu,&(\text{Equation (\ref{tf_def})})
\end{align*}
thus $1_X:(X,(\tc\rda\tc)^*)\lra(X,c)$ is a continuous $\CQ$-functor. Finally, it is easy to check that $\{1_{\bbX}:\bbX\lra\sS\sD\bbX\}_{\bbX\in\ob(\QCat)}$ and $\{1_X:\sD\sS(X,c)\lra(X,c)\}_{(X,c)\in\ob(\QCls)}$ are both natural transformations and satisfy the triangle identities (see \cite[Theorem IV.1.2]{MacLane1998}), thus they are respectively the unit and counit of the adjunction $\sD\dv\sS$.
\end{proof}

A $\CQ$-closure space $(X,c)$ is called a \emph{$\CQ$-Alexandrov space} if the inclusion $j:\sC(X,c)\ \to/^(->/\sP X$ not only has a left adjoint (i.e., the codomain restriction $\oc:\sP X\lra\sC(X,c)$ of $c$), but also has a right adjoint in $\QCat$; or equivalently, if $\sC(X,c)$ is both a $\CQ$-closure system and a $\CQ$-interior system of $\sP X$. The following proposition follows immediately from Proposition \ref{closure_system_condition} and its dual, together with Example \ref{PX_tensor_complete}:

\begin{prop} \label{Alexandrov_closure_system}
Let $X$ be a $\CQ_0$-typed set and $\bbA$ a $\CQ$-subcategory of $\sP X$. Then $\bbA$ defines a $\CQ$-Alexandrov space on $X$ if, and only if,
\begin{itemize}
\item[\rm (1)] $u\circ\mu\in\bbA_0$ for all $\mu\in\bbA_0$ and $u\in\sP\{|\mu|\}$,
\item[\rm (2)] $v\rda\mu\in\bbA_0$ for all $\mu\in\bbA_0$ and $v\in\sPd\{|\mu|\}$,
\item[\rm (3)] $\displaystyle\bv_{i\in I}\mu_i\in\bbA_0$ and $\displaystyle\bw_{i\in I}\mu_i\in\bbA_0$ for all $\{\mu_i\}_{i\in I}\subseteq\bbA_q$ $(q\in\CQ_0)$.
\end{itemize}
\end{prop}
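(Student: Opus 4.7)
The plan is to apply Proposition \ref{closure_system_condition} twice: once directly (to handle the ``closure system'' half of the Alexandrov condition) and once in its dual form (for the ``interior system'' half), and then to translate the abstract closure conditions into the explicit componentwise formulas afforded by Example \ref{PX_tensor_complete}.

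First, I would recall that by Example \ref{PX_tensor_complete}, the $\CQ$-category $\sP X$ is separated, tensored, cotensored and complete, and moreover the tensors and cotensors there are computed as
\[
u\otimes\mu=u\circ\mu,\qquad v\rat\mu=v\rda\mu,
\]
while the underlying joins and meets in each $(\sP X)_q$ are the pointwise joins and meets of $\CQ$-arrows. Thus all the operations appearing in Proposition \ref{closure_system_condition}(iv) are, in the case of $\sP X$, precisely the operations listed in (1), (2), (3) of the proposition to be proved.

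Next, by the very definition of a $\CQ$-Alexandrov space, $\bbA$ is an Alexandrov space on $X$ exactly when the inclusion $j\colon\bbA\hookrightarrow\sP X$ has both a left and a right adjoint in $\QCat$; equivalently, when $\bbA$ is simultaneously a $\CQ$-closure system and a $\CQ$-interior system of $\sP X$. Applying Proposition \ref{closure_system_condition} (in the form (i)$\Leftrightarrow$(iv)) to $\sP X$, the closure-system condition is equivalent to $\bbA$ being closed under cotensors and underlying meets in $\sP X$, which by the formulas above is precisely condition (2) together with the meet half of condition (3). Dualising Proposition \ref{closure_system_condition} via the 2-isomorphism $(-)^{\op}\colon(\QCat)^{\co}\cong\CQ^{\op}\text{-}\Cat$ from Remark \ref{QCat_dual}, the interior-system condition becomes closure under tensors and underlying joins, i.e.\ condition (1) together with the join half of condition (3).

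Combining the two equivalences gives the desired biconditional; there is no serious obstacle since everything reduces mechanically to the two already-established characterisation theorems once the explicit shape of tensors, cotensors, and underlying (co)limits in $\sP X$ from Example \ref{PX_tensor_complete} is invoked. The one point I would be careful about is making sure the dual of Proposition \ref{closure_system_condition} is applied to the $\CQ^{\op}$-category $(\sP X)^{\op}$ and then transported back, so that ``tensor'' on the $\CQ$-side really corresponds to ``cotensor'' on the $\CQ^{\op}$-side; after that bookkeeping the proof is immediate.
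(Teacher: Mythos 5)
Your proposal is correct and follows exactly the route the paper intends: the paper itself presents this proposition as following ``immediately from Proposition \ref{closure_system_condition} and its dual, together with Example \ref{PX_tensor_complete},'' which is precisely your argument (closure-system half gives cotensors $v\rda\mu$ and underlying meets; the dualized interior-system half gives tensors $u\circ\mu$ and underlying joins). Your cautionary remark about transporting the dual statement through $(-)^{\op}$ is the right bookkeeping point, and nothing further is needed.
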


\begin{exmp} \label{Alexandrov_exmp}
For any $\CQ$-category $\bbX$, $\sD\bbX=(\bbX_0,\bbX^*)$ is a $\CQ$-Alexandrov space.
\end{exmp}

\begin{prop} \label{Alexandrov_DS}
A $\CQ$-closure space $(X,c)$ is a $\CQ$-Alexandrov space if, and only if, $(X,c)=\sD\sS(X,c)$.
\end{prop}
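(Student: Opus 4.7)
The plan is to reduce the equality $(X,c)=\sD\sS(X,c)$ to an equality of closure systems on $\sP X$, since any $\CQ$-closure space is determined by its closed presheaves. Writing $\bbX=\sS(X,c)=(X,\tc\rda\tc)$, the space $\sD\sS(X,c)$ equals $(X,\bbX^*)$, whose closed presheaves are exactly $\sP\bbX$. Thus the task becomes: show that $(X,c)$ is a $\CQ$-Alexandrov space if, and only if, $\sC(X,c)=\sP\bbX$ inside $\sP X$.

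The ``if'' direction is immediate: if $(X,c)=\sD\sS(X,c)$, then $(X,c)$ is of the form $\sD\bbX$, which is a $\CQ$-Alexandrov space by Example \ref{Alexandrov_exmp}. For the ``only if'' direction, one inclusion is already contained in the proof of Theorem \ref{D_S_adjoint}: the inequality $(\tc\rda\tc)^*\leq c$ shown there forces $\sC(X,c)\subseteq\sP\bbX$. So everything hinges on proving the reverse inclusion $\sP\bbX\subseteq\sC(X,c)$ under the Alexandrov assumption.

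The key step is to first handle Yoneda images. For each $x\in X$, by Lemma \ref{tc_rda_tc} one has
$$\sy_{\bbX}(x)=\bbX(-,x)=(\tc\rda\tc)(-,x)=\bw_{\mu\in\sC(X,c)}\mu(x)\rda\mu,$$
which I would interpret in $\sP X$ via Example \ref{PX_tensor_complete}: each term $\mu(x)\rda\mu$ is the cotensor $\mu(x)\rat\mu$ of a closed presheaf by the $\CQ$-arrow $\mu(x)\in\sPd\{|\mu|\}$. Using conditions (2) and (3) of Proposition \ref{Alexandrov_closure_system} (closure under cotensors and under underlying meets at a fixed type $|x|$), this presents $\sy_{\bbX}(x)$ as a meet of cotensors of closed presheaves, hence $\sy_{\bbX}(x)\in\sC(X,c)$.

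Given this, the second step is to represent an arbitrary $\nu\in\sP\bbX$ as a join of tensors of the $\sy_{\bbX}(x)$'s: the presheaf condition $\nu\circ\bbX=\nu$ rewrites as
$$\nu=\bv_{x\in X}\nu(x)\circ\sy_{\bbX}(x),$$
a pointwise join at type $|\nu|$ in $\sP X$, where each term is the tensor $\nu(x)\otimes\sy_{\bbX}(x)$ of a closed presheaf by $\nu(x)\in\sP\{|x|\}$. Conditions (1) and (3) of Proposition \ref{Alexandrov_closure_system} then give $\nu\in\sC(X,c)$, completing $\sP\bbX\subseteq\sC(X,c)$. I expect the only delicate point to be bookkeeping of the various types involved (making sure that tensors, cotensors, and joins of closed presheaves are carried out at matching types so that Proposition \ref{Alexandrov_closure_system} applies), but no genuinely new idea should be needed beyond the Alexandrov closure conditions and the Yoneda-style decomposition of a presheaf.
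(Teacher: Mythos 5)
Your proposal is correct and follows essentially the same route as the paper: the sufficiency via Example \ref{Alexandrov_exmp}, one inequality from the proof of Theorem \ref{D_S_adjoint}, and the reverse inclusion by exhibiting $(\tc\rda\tc)^*\mu=\bv_{x}\mu(x)\circ\big(\bw_{\lam}(c\lam)(x)\rda(c\lam)\big)$ as a join of tensors of meets of cotensors of closed presheaves and invoking Proposition \ref{Alexandrov_closure_system}. Your two-step presentation (first closing the Yoneda images $\sy_{\bbX}(x)$, then decomposing a general $\nu\in\sP\bbX$) is just an unpacking of the paper's single displayed formula, and the type bookkeeping you flag does go through as you describe.
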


\begin{proof}
The sufficiency follows immediately from Example \ref{Alexandrov_exmp}. For the necessity, since it is already known in the proof of Theorem \ref{D_S_adjoint} that $(\tc\rda\tc)^*\leq c$, we only need to prove $c\leq(\tc\rda\tc)^*$. Indeed, for all $\mu\in\sP X$, Proposition \ref{Alexandrov_closure_system} guarantees that
$$(\tc\rda\tc)^*\mu=\mu\circ(\tc\rda\tc)=\bv_{x\in X}\mu(x)\circ\Big(\bw_{\lam\in\sP X}(c\lam)(x)\rda(c\lam)\Big)\in\sC(X,c),$$
and consequently $c\mu\leq c(\tc\rda\tc)^*\mu=(\tc\rda\tc)^*\mu$.
\end{proof}



\begin{rem} \label{distributor_specialization}
For a general $\CQ$-closure space $(\bbX,c)$, its specialization $\CQ$-category may be defined as
\begin{equation} \label{tc_tc0}
(\bbX_0,\widetilde{c_0}\rda\widetilde{c_0})=(\bbX_0,\tc\rda\tc)
\end{equation}
since Proposition \ref{cPX=c0PX0} and Lemma \ref{tc_rda_tc} imply
$$(\widetilde{c_0}\rda\widetilde{c_0})(x,y)=\bw_{\mu\in\sC(\bbX_0,c_0)}\mu(y)\rda\mu(x)=\bw_{\mu\in\sC(\bbX,c)}\mu(y)\rda\mu(x)=(\tc\rda\tc)(x,y)$$
for all $x,y\in\bbX_0$. Here $\tc\rda\tc$ is a coarser $\CQ$-category on $\bbX_0$ in the sense that $\bbX\leq\tc\rda\tc$ always holds.

Note that the above definition only relies on the $\CQ$-distributor $\tc:\bbX\oto\PX$. In fact, one may define a specialization $\CQ$-category
$$(\bbX_0,\phi\rda\phi)$$
for any given $\CQ$-distributor $\phi:\bbX\oto\bbY$, which is obviously a coarser $\CQ$-category than $\bbX$ on the $\CQ_0$-typed set $\bbX_0$. In this way the realm of the specialization order would be largely extended and deservers further investigation in the future.
\end{rem}

\subsection{$\CQ$-closure spaces and complete $\CQ$-categories}

In this subsection, we incorporate and enhance some results in \cite{Shen2013a} to demonstrate the relation between the categories $\QCatCls$ and $\QSup$.

First, we establish the 2-functoriality of the assignment
$$(\bbX,c)\mapsto\sC(\bbX,c).$$
For a continuous $\CQ$-functor $f:(\bbX,c)\lra(\bbY,d)$, Proposition \ref{continuous_functor_condition}(iv) shows that the $\CQ$-functor $f^{\la}:\PY\lra\PX$ may be restricted to $$\ftla:\sC(\bbY,d)\lra\sC(\bbX,c),\quad\lam\mapsto f^{\la}\lam,$$
and it is in fact right adjoint to the composite $\CQ$-functor (recall that $\od$ is the codomain restriction of the $\CQ$-closure operator $d:\PY\lra\PY$)
$$\ftra:=(\sC(\bbX,c)\ \to/^(->/\PX\to^{f^{\ra}}\PY\to^{\od}\sC(\bbY,d))$$
as the following proposition reveals, which will be proved as a special case of Proposition \ref{czeta_la} in the next section:

\begin{prop} {\rm\cite{Shen2013a}} \label{dfra_fla_adjoint}
For a continuous $\CQ$-functor $f:(\bbX,c)\lra(\bbY,d)$,
$$\ftra\dv\ftla:\sC(\bbY,d)\lra\sC(\bbX,c).$$
\end{prop}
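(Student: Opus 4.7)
The plan is to derive the adjunction $\ftra\dv\ftla$ in $\QCat$ by composing two adjunctions that are already available: the Kan adjunction $f^{\ra}\dv f^{\la}:\PX\lra\PY$, and the closure adjunction $\od\dv j:\PY\lra\sC(\bbY,d)$ furnished by Proposition \ref{closure_operator_adjoint_inclusion} applied to the $\CQ$-closure operator $d:\PY\lra\PY$ (with $j:\sC(\bbY,d)\ \to/^(->/\PY$ the inclusion).

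First I would verify that $\ftla:\sC(\bbY,d)\lra\sC(\bbX,c)$ is a well-defined $\CQ$-functor, which is immediate from Proposition \ref{continuous_functor_condition}(iv): the continuity of $f$ guarantees that $f^{\la}\lam\in\sC(\bbX,c)$ whenever $\lam\in\sC(\bbY,d)$. The map $\ftra=\od\circ f^{\ra}\circ j'$ (where $j':\sC(\bbX,c)\ \to/^(->/\PX$ is the inclusion) is a composite of $\CQ$-functors, hence is also a $\CQ$-functor. Moreover, both $\sC(\bbX,c)$ and $\sC(\bbY,d)$ are \emph{full} $\CQ$-subcategories of $\PX$ and $\PY$, so their hom-arrows coincide with those inherited from $\PX$ and $\PY$ respectively.

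The key computation is then the string of equalities, for $\mu\in\sC(\bbX,c)$ and $\lam\in\sC(\bbY,d)$:
$$\sC(\bbY,d)(\ftra\mu,\lam)=\PY(j\od f^{\ra}j'\mu,\lam)=\PY(f^{\ra}\mu,j\lam)=\PX(\mu,f^{\la}\lam)=\sC(\bbX,c)(\mu,\ftla\lam),$$
where the first and last equalities use fullness of the inclusions $j',j$, the middle-left equality uses $\od\dv j$, and the middle-right equality uses $f^{\ra}\dv f^{\la}$. This equality of hom-arrows, natural in $\mu$ and $\lam$, is exactly the definition of the adjunction $\ftra\dv\ftla$ in $\QCat$.

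I do not expect any significant technical obstacle: the statement is a formal consequence of composing two pre-existing adjunctions along with Proposition \ref{continuous_functor_condition}(iv). The only subtlety worth noting is that one must read the restriction $\ftra$ through the inclusion $j$ when checking the hom-arrow of $\sC(\bbY,d)$, but this is precisely handled by the fullness of $\sC(\bbY,d)$ in $\PY$.
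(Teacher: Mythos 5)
Your proof is correct and follows essentially the same route as the paper: the paper obtains this proposition as the special case $\zeta=f^{\nat}$ of the more general Proposition \ref{czeta_la} for continuous $\CQ$-distributors, whose proof is exactly the hom-arrow chase you perform — combining the Kan adjunction with the fact that homs out of a closure are unchanged when the target is closed. The only cosmetic difference is that you package that last fact as the reflection adjunction $\od\dv j$ from Proposition \ref{closure_operator_adjoint_inclusion}, whereas the paper verifies the corresponding identity $\PX(c\zeta^*\lam,\mu)=\PX(\zeta^*\lam,\mu)$ by a short direct computation.
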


Consequently, one may easily deduce that the assignments $(\bbX,c)\mapsto\sC(\bbX,c)$ and $f\mapsto\ftra$ induce a 2-functor
$$\sC:\QCatCls\lra\QSup.$$

Conversely, for a complete $\CQ$-category $\bbX$, since $\sup_{\bbX}\dv\sy_{\bbX}$,
$$c_{\bbX}:=\sy_{\bbX}{\sup}_{\bbX}:\PX\lra\PX$$
is a $\CQ$-closure operator on $\PX$ and thus one has a $\CQ$-closure space $(\bbX,c_{\bbX})$.

\begin{prop} \label{la_continuous}
For complete $\CQ$-categories $\bbX$, $\bbY$, a $\CQ$-functor $f:\bbX\lra\bbY$ is $\sup$-preserving if, and only if, $f:(\bbX,c_{\bbX})\lra(\bbY,c_{\bbY})$ is a continuous $\CQ$-functor.
\end{prop}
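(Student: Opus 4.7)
The plan is to pivot everything through the identities $c_{\bbX}=\sy_{\bbX}\sup_{\bbX}$ and $c_{\bbY}=\sy_{\bbY}\sup_{\bbY}$ together with the naturality of Yoneda (Proposition \ref{Yoneda_natural}), which tells us that $f^{\ra}\sy_{\bbX}=\sy_{\bbY} f$ for any $\CQ$-functor $f:\bbX\lra\bbY$. Using these two ingredients, both sides of the continuity inequality can be rewritten in terms of $\sup$, and the statement essentially reduces to an order-theoretic comparison inside $\PY$.

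For the ``$\sup$-preserving $\Rightarrow$ continuous'' direction, I would simply compute
$$c_{\bbY} f^{\ra}=\sy_{\bbY}{\sup}_{\bbY}f^{\ra}=\sy_{\bbY}f\,{\sup}_{\bbX}=f^{\ra}\sy_{\bbX}{\sup}_{\bbX}=f^{\ra}c_{\bbX},$$
where the second equality uses $\sup$-preservation and the third uses Yoneda naturality. In particular $f^{\ra}c_{\bbX}\leq c_{\bbY}f^{\ra}$, so $f:(\bbX,c_{\bbX})\lra(\bbY,c_{\bbY})$ is continuous.

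For the converse, continuity $f^{\ra}c_{\bbX}\leq c_{\bbY}f^{\ra}$ rewrites (by the same manipulations) as $\sy_{\bbY}f\,{\sup}_{\bbX}\leq \sy_{\bbY}{\sup}_{\bbY}f^{\ra}$ pointwise on $\PX$. Since $\sy_{\bbY}$ is fully faithful (Lemma \ref{Yoneda_lemma}), it reflects the underlying preorder, so $f\,{\sup}_{\bbX}\leq {\sup}_{\bbY}f^{\ra}$. For the reverse inequality, which holds for \emph{any} $\CQ$-functor between complete $\CQ$-categories, I would start from the unit $\mu\leq \sy_{\bbX}\sup_{\bbX}\mu$ of $\sup_{\bbX}\dv\sy_{\bbX}$, apply the monotone $f^{\ra}$ and then $\sup_{\bbY}$, use $f^{\ra}\sy_{\bbX}=\sy_{\bbY}f$, and finally invoke $\sup_{\bbY}\sy_{\bbY}=1_{\bbY}$ (the counit of $\sup_{\bbY}\dv\sy_{\bbY}$). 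Combining the two inequalities yields $f\,{\sup}_{\bbX}={\sup}_{\bbY}f^{\ra}$, i.e., $f$ is $\sup$-preserving.

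There is no genuine obstacle: the only point requiring a moment's care is that the continuity inequality lives in the pointwise preorder of $\CQ$-functors $\PX\lra\PY$, so one must confirm that fully faithfulness of $\sy_{\bbY}$ reflects this pointwise comparison to one between $\CQ$-functors $\PX\lra\bbY$, which is immediate since the underlying preorder of a $\CQ$-category is determined by its hom-arrows.
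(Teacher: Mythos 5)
Your proof is correct and follows essentially the same route as the paper: both arguments hinge on $c_{\bbX}=\sy_{\bbX}\sup_{\bbX}$, the Yoneda naturality $f^{\ra}\sy_{\bbX}=\sy_{\bbY}f$, the ever-valid inequality $\sup_{\bbY}f^{\ra}\leq f\sup_{\bbX}$ obtained from the unit of $\sup_{\bbX}\dv\sy_{\bbX}$ and the counit of $\sup_{\bbY}\dv\sy_{\bbY}$, and order-reflection by the fully faithful $\sy_{\bbY}$. The only cosmetic difference is that you split the equivalence into two implications while the paper chains biconditionals; note also that you only need $\sup_{\bbY}\sy_{\bbY}\leq 1_{\bbY}$ (the counit) rather than the equality you cite, but that does not affect correctness.
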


\begin{proof}
First note that if $\bbX$, $\bbY$ are both complete, then
$$f^{\ra}\leq f^{\ra}\sy_{\bbX}{\sup}_{\bbX}=\sy_{\bbY}f{\sup}_{\bbX},$$
where the first inequality holds since $\sup_{\bbX}\dv\sy_{\bbX}$, and the second equality follows from Proposition \ref{Yoneda_natural}. Together with $\sup_{\bbY}\dv\sy_{\bbY}$ one concludes
\begin{equation} \label{supYfra_leq_fsupX}
{\sup}_{\bbY}f^{\ra}\leq f{\sup}_{\bbX}.
\end{equation}
Thus, $f:\bbX\lra\bbY$ is $\sup$-preserving if and only if $f\sup_{\bbX}\leq\sup_{\bbY}f^{\ra}$ since the reverse inequality (\ref{supYfra_leq_fsupX}) always holds. Note further that
\begin{align*}
f{\sup}_{\bbX}\leq{\sup}_{\bbY}f^{\ra}&\iff\sy_{\bbY}f{\sup}_{\bbX}\leq\sy_{\bbY}{\sup}_{\bbY}f^{\ra}\\
&\iff f^{\ra}\sy_{\bbX}{\sup}_{\bbX}\leq\sy_{\bbY}{\sup}_{\bbY}f^{\ra}&(\text{Proposition \ref{Yoneda_natural}})\\
&\iff f^{\ra}c_{\bbX}\leq c_{\bbY}f^{\ra},
\end{align*}
and hence, the conclusion follows.
\end{proof}

Proposition \ref{la_continuous} gives rise to a fully faithful 2-functor
$$\sI:\QSup\lra\QCatCls,\quad\bbX\mapsto(\bbX,c_{\bbX})$$
that embeds $\QSup$ in $\QCatCls$ as a full 2-subcategory. In fact, this embedding is reflective with $\sC$ the reflector:

\begin{thm} {\rm\cite{Shen2013a}} \label{C_I_adjoint}
$\sC:\QCatCls\lra\QSup$ is a left inverse (up to isomorphism) and left adjoint of $\sI:\QSup\lra\QCatCls$.
\end{thm}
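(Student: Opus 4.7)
\smallskip\noindent\textbf{Proof plan.} The cleanest route is to exhibit the adjunction $\sC\dashv\sI$ via an explicit unit; since $\sI$ is already known to be fully faithful by Proposition \ref{la_continuous}, the ``left inverse up to isomorphism'' assertion $\sC\sI\cong 1_{\QSup}$ will follow automatically from the general fact that the counit of an adjunction with fully faithful right adjoint is a natural iso. So the whole theorem reduces to producing and checking the unit.

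\smallskip\noindent\textbf{The unit.} For each $\CQ$-closure space $(\bbX,c)$, I define
$$\eta_{(\bbX,c)}:(\bbX,c)\lra\sI\sC(\bbX,c)=(\sC(\bbX,c),c_{\sC(\bbX,c)}),\quad x\mapsto c(\sy_{\bbX}x),$$
i.e.\ the composite $\overline{c}\circ\sy_{\bbX}$ where $\overline{c}:\PX\lra\sC(\bbX,c)$ is the codomain restriction of $c$ (Proposition \ref{closure_operator_adjoint_inclusion}). Continuity is most easily checked via Proposition \ref{continuous_functor_condition}(iv): the closed presheaves of $\sI\sC(\bbX,c)$ are exactly the representables $\sy_{\sC(\bbX,c)}\mu$ with $\mu\in\sC(\bbX,c)$, and using Equation (\ref{fla_lam}), the adjunction $\overline{c}\dv j$ (with $j:\sC(\bbX,c)\hookrightarrow\PX$ the inclusion), and the Yoneda lemma, one computes
$$(\eta^{\la}\sy_{\sC(\bbX,c)}\mu)(x)=\sC(\bbX,c)(\overline{c}\sy_{\bbX}x,\mu)=\PX(\sy_{\bbX}x,\mu)=\mu(x),$$
so $\eta^{\la}\sy_{\sC(\bbX,c)}\mu=\mu\in\sC(\bbX,c)$ as required.

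\smallskip\noindent\textbf{Universal property.} Given any separated complete $\bbY$ and any continuous $\CQ$-functor $f:(\bbX,c)\lra\sI\bbY=(\bbY,c_{\bbY})$, set
$$\overline{f}:\sC(\bbX,c)\lra\bbY,\quad\overline{f}(\mu):={\sup}_{\bbY}f^{\ra}\mu.$$
First, $\overline{f}\circ\eta=f$: continuity of $f$ gives $c_{\bbY}f^{\ra}c=c_{\bbY}f^{\ra}$ (Proposition \ref{continuous_functor_condition}(ii)), and since $\sy_{\bbY}$ is fully faithful this yields $\sup_{\bbY}f^{\ra}c=\sup_{\bbY}f^{\ra}$; combined with Proposition \ref{Yoneda_natural} and $\sup_{\bbY}\sy_{\bbY}=1_{\bbY}$, one gets $\overline{f}(\eta x)=\sup_{\bbY}\sy_{\bbY}(fx)=fx$. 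Sup-preservation of $\overline{f}$ is the main computational burden: expanding $\sup_{\sC(\bbX,c)}\Phi=\overline{c}(\sup_{\PX}j^{\ra}\Phi)$ by Corollary \ref{closure_system_sup} and using successively the identity $\sup_{\bbY}f^{\ra}c=\sup_{\bbY}f^{\ra}$, the sup-preservation of $f^{\ra}$ (as a left adjoint, $f^{\ra}\dv f^{\la}$), and of $\sup_{\bbY}:\PY\lra\bbY$ (Proposition \ref{la_sup_preserving}), one chases the equality $\overline{f}\sup_{\sC(\bbX,c)}\Phi=\sup_{\bbY}\overline{f}^{\ra}\Phi$.

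\smallskip\noindent\textbf{Uniqueness and conclusion.} The Yoneda formula in $\PX$ (Example \ref{PX_tensor_complete}) gives $\mu=\bv_{x}\mu(x)\circ\sy_{\bbX}x=\bv_{x}\mu(x)\otimes_{\PX}\sy_{\bbX}x$, and applying the sup-preserving $\overline{c}$ (which preserves tensors and underlying joins by Corollary \ref{closure_system_sup}) to any $\mu\in\sC(\bbX,c)$ yields
$$\mu=\overline{c}\mu=\bigsqcup_{x\in\bbX_0}\mu(x)\otimes_{\sC(\bbX,c)}\eta(x).$$
Hence any $\sup$-preserving $g:\sC(\bbX,c)\lra\bbY$ with $g\circ\eta=f$ is forced to satisfy $g(\mu)=\bv_{x}\mu(x)\otimes_{\bbY}fx$, which by a direct calculation with tensors equals $\sup_{\bbY}f^{\ra}\mu=\overline{f}(\mu)$. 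This establishes $\sC\dashv\sI$ with unit $\eta$; full faithfulness of $\sI$ (Proposition \ref{la_continuous}) makes the counit a natural iso, so $\sC\sI\cong 1_{\QSup}$. The main obstacle is thus the sup-preservation of $\overline{f}$, which is the one place where the continuity condition on $f$ has to be used essentially rather than formally.
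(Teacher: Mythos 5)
Your proof is correct, but it runs the adjunction in the opposite direction from the paper. The paper first observes directly that $\sC\sI\bbX=\{\sy_{\bbX}x\mid x\in\bbX_0\}$, so that $\sup_{\bbX}:\sC\sI\bbX\lra\bbX$ is an isomorphism, and then verifies the \emph{couniversal} property of the family $\{\sup_{\bbX}\}$ as counit: given a left adjoint $f:\sC(\bbY,d)\lra\bbX$ it produces the unique continuous $g=f\,\od\,\sy_{\bbY}$ with $\sup_{\bbX}\circ\,\gtra=f$. You instead exhibit the \emph{unit} $\eta_{(\bbX,c)}=\oc\,\sy_{\bbX}$ and verify its universal property, then recover $\sC\sI\cong 1_{\QSup}$ from the general fact that a fully faithful right adjoint has invertible counit. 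The two routes are genuinely dual decompositions of the same adjunction (indeed, your $\eta$ is implicitly the map $\od\,\sy_{\bbY}$ appearing inside the paper's construction of $g$); the paper's route makes the ``left inverse up to isomorphism'' clause explicit and concrete at the outset and keeps all verifications inside $\QCatCls$, whereas yours shifts the computational burden to showing that $\overline{f}=\sup_{\bbY}f^{\ra}j$ is $\sup$-preserving, which — as you rightly flag — is the one step where continuity of $f$ enters essentially, via $\sup_{\bbY}f^{\ra}c=\sup_{\bbY}f^{\ra}$. That step does go through: writing $h=\sup_{\bbY}f^{\ra}$, which is a composite of left adjoints, one gets $\overline{f}\sup_{\sC(\bbX,c)}\Phi=h\,c\sup_{\PX}j^{\ra}\Phi=h\sup_{\PX}j^{\ra}\Phi=\sup_{\bbY}(hj)^{\ra}\Phi=\sup_{\bbY}\overline{f}^{\ra}\Phi$ by Proposition \ref{la_sup_preserving}. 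One small gap worth closing: to conclude that $\eta$ is the unit of an adjunction whose left adjoint is the \emph{given} functor $\sC$ (rather than the functor freely generated by the universal property), you should also note that $\eta$ is natural, i.e.\ $\gtra\circ\eta_{(\bbX,c)}=\eta_{(\bbX',c')}\circ g$ for continuous $g:(\bbX,c)\lra(\bbX',c')$; this is immediate from $\oc'g^{\ra}c=\oc'g^{\ra}$ (Proposition \ref{continuous_functor_condition}(ii)) and Proposition \ref{Yoneda_natural}, but it is not automatic from the per-object universal property alone.
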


Although a proof of this theorem can be found in \cite{Shen2013a}, here we provide an easier alternative proof:

\begin{proof}[Proof of Theorem \ref{C_I_adjoint}]
Note that for any separated complete $\CQ$-category $\bbX$,
$$\sC\sI\bbX=\sC(\bbX,c_{\bbX})=\{\sy_{\bbX}x\mid x\in\bbX_0\}.$$
Thus $\sup_{\bbX}:\sC(\bbX,c_{\bbX})\lra\bbX$ is clearly an isomorphism (and in particular a left adjoint) in $\QCat$, with the codomain restriction $\sy_{\bbX}:\bbX\lra \sC(\bbX,c_{\bbX})$ of the Yoneda embedding as its inverse. Moreover, $\{\sup_{\bbX}:\sC\sI\bbX\lra\bbX\}_{\bbX\in\ob(\QSup)}$ is a 2-natural transformation as one easily derives from Proposition \ref{la_sup_preserving}. Therefore, $\sC\sI$ is naturally isomorphic to the identity 2-functor on $\QSup$, and it remains to show that $\{\sup_{\bbX}\}_{\bbX\in\ob(\QSup)}$ is the counit of the adjunction $\sC\dv\sI$.

To this end, taking any $\CQ$-closure space $(\bbY,d)$ and left adjoint $\CQ$-functor $f:\sC(\bbY,d)\lra\bbX$, one must find a unique continuous $\CQ$-functor $g:(\bbY,d)\lra(\bbX,c_\bbX)$ that makes the following diagram commute:
\begin{equation} \label{supX_Cg_f}
\bfig
\btriangle/-->`->`->/<1200,500>[\sC(\bbY,d)`\sC\sI\bbX=\sC(\bbX,c_{\bbX})`\bbX;\sC g=\gtra`f`\sup_{\bbX}]
\efig
\end{equation}

For this, one defines $g$ as the composite
$$g:=(\bbY\to^{\sy_{\bbY}}\PY\to^{\od}\sC(\bbY,d)\to^f\bbX).$$
Note that $f\od:\PY\lra\bbX$ is a left adjoint $\CQ$-functor since both $f$ and $\od$ are left adjoints (for $\od$, see Proposition \ref{closure_operator_adjoint_inclusion}). Thus
\begin{align*}
f\lam&=f\od\lam&(\lam\in\sC(\bbY,d))\\
&=f\od\sy_{\bbY}^{\la}\sy_{\bbY}^{\ra}\lam&(\text{Proposition \ref{fully_faithful_graph}(iii)})\\
&=f\od\,{\sup}_{\PY}\sy_{\bbY}^{\ra}\lam&(\text{Example \ref{PX_tensor_complete}})\\
&={\sup}_{\bbX}(f\od)^{\ra}\sy_{\bbY}^{\ra}\lam&(\text{Proposition \ref{la_sup_preserving}})\\
&={\sup}_{\bbX}c_{\bbX}g^{\ra}\lam&({\sup}_{\bbX}\dv\sy_{\bbX})\\
&={\sup}_{\bbX}\gtra\lam
\end{align*}
for all $\lam\in\sC(\bbY,d)$ and
\begin{align*}
g^{\ra}d&\leq\sy_{\bbX}{\sup}_{\bbX}c_{\bbX}g^{\ra}d&({\sup}_{\bbX}\dv\sy_{\bbX})\\
&=\sy_{\bbX}{\sup}_{\bbX}\gtra\od\\
&=\sy_{\bbX}f\od\\
&=\sy_{\bbX}f\od\sy_{\bbY}^{\la}\sy_{\bbY}^{\ra}&(\text{Proposition \ref{fully_faithful_graph}(iii)})\\
&=\sy_{\bbX}f\od\,{\sup}_{\PY}\sy_{\bbY}^{\ra}&(\text{Example \ref{PX_tensor_complete}})\\
&=\sy_{\bbX}{\sup}_{\bbX}(f\od)^{\ra}\sy_{\bbY}^{\ra}&(\text{Proposition \ref{la_sup_preserving}})\\
&=c_{\bbX}g^{\ra}.
\end{align*}
Hence, $g:(\bbY,d)\lra(\bbX,c_\bbX)$ is continuous and the diagram (\ref{supX_Cg_f}) commutes.

For the uniqueness of $g$, suppose there is another continuous $\CQ$-functor $h:(\bbY,d)\lra(\bbX,c_\bbX)$ that makes the diagram (\ref{supX_Cg_f}) commute. Then
\begin{align*}
h&={\sup}_{\bbX}\sy_{\bbX}h&({\sup}_{\bbX}\sy_{\bbX}=1_{\bbX})\\
&={\sup}_{\bbX}h^{\ra}\sy_{\bbY}&(\text{Proposition \ref{Yoneda_natural}})\\
&={\sup}_{\bbX}c_{\bbX}h^{\ra}\sy_{\bbY}&({\sup}_{\bbX}\dv\sy_{\bbX})\\
&={\sup}_{\bbX}c_{\bbX}h^{\ra}d\sy_{\bbY}&(\text{Proposition \ref{continuous_functor_condition}(ii)})\\
&={\sup}_{\bbX}h^{\triangleright}\od\sy_{\bbY}\\
&=f\od\sy_{\bbY}&(\text{commutativity of the diagram (\ref{supX_Cg_f})})\\
&=g,
\end{align*}
where ${\sup}_{\bbX}\sy_{\bbX}=1_{\bbX}$ may be easily verified since $\bbX$ is separated, completing the proof.
\end{proof}

Let $\sC_0:\QCls\lra\QSup$ denote the restriction of $\sC$ on $\QCls$, and $\sI_0:\QSup\lra\QCls$ the composition of $(-)_0:\QCatCls\lra\QCls$ and $\sI$, we have:

\begin{cor} \label{C0_I0_adjoint}
$\sC_0:\QCls\lra\QSup$ is a left inverse (up to isomorphism) and left adjoint of $\sI_0:\QSup\lra\QCls$.
\end{cor}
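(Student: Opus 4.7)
My plan is to recognize this corollary as a purely formal consequence of two adjunctions already available in the paper, without introducing new constructions. Namely, Proposition \ref{QCatCls_QCls} gives the coreflection $J\dv(-)_0$, where $J:\QCls\ \to/^(->/\QCatCls$ is the inclusion, and Theorem \ref{C_I_adjoint} gives $\sC\dv\sI$. By the very definitions in the statement, $\sC_0=\sC\circ J$ and $\sI_0=(-)_0\circ\sI$, so composing the two adjunctions yields $\sC_0\dv\sI_0$ with the unit and counit constructed from the components of the two pieces in the standard way. No calculation is required for this part.

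For the left-inverse assertion, the first step is to unfold: for a separated complete $\CQ$-category $\bbX$,
$$\sC_0\sI_0\bbX=\sC\bigl(J(\sI\bbX)_0\bigr)=\sC\bigl(\bbX_0,(c_{\bbX})_0\bigr).$$
Next I would invoke Proposition \ref{cPX=c0PX0}, which identifies $\sC(\bbX_0,(c_{\bbX})_0)$ with $\sC(\bbX,c_{\bbX})=\sC\sI\bbX$. Finally, Theorem \ref{C_I_adjoint} supplies the natural isomorphism $\sC\sI\bbX\cong\bbX$ (given concretely by $\sup_{\bbX}$ with inverse the codomain restriction of $\sy_{\bbX}$), so chaining these identifications gives $\sC_0\sI_0\cong\id_{\QSup}$, natural in $\bbX$.

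There is no real obstacle here: the only nontrivial ingredient beyond formal adjunction composition is Proposition \ref{cPX=c0PX0}, which says that forgetting the $\CQ$-category structure in favour of the underlying $\CQ_0$-typed set does not alter the lattice of closed presheaves, so that $\sC$ factors (up to canonical identification) through $(-)_0$. The corollary is then just the restriction of the $\QCatCls$-level reflection $\sC\dv\sI$ along the coreflective embedding $\QCls\ \to/^(->/\QCatCls$.
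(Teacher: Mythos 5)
Your proposal is correct and follows essentially the same route as the paper: the adjunction $\sC_0\dv\sI_0$ is obtained by composing the coreflection of Proposition \ref{QCatCls_QCls} with the adjunction of Theorem \ref{C_I_adjoint}, and the left-inverse claim is reduced via Proposition \ref{cPX=c0PX0} to $\sC_0\sI_0\bbX=\sC(\bbX_0,(c_{\bbX})_0)=\sC(\bbX,c_{\bbX})\cong\bbX$. This matches the paper's own argument exactly.
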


\begin{proof}
$\sC_0\dv\sI_0$ follows from Proposition \ref{QCatCls_QCls} and Theorem \ref{C_I_adjoint}. Furthermore, Proposition \ref{cPX=c0PX0} ensures that
$$\sC_0\sI_0\bbX=\sC(\bbX_0,(c_{\bbX})_0)=\sC(\bbX,c_{\bbX})=\sC\sI\bbX\cong\bbX$$
for all separated complete $\CQ$-categories $\bbX$.
\end{proof}

Finally, Theorem \ref{D_S_adjoint} and Corollary \ref{C0_I0_adjoint} imply that the functor
$$\sP:=(\QCat\to^{\sD}\QCls\to^{\sC_0}\QSup)$$
is left adjoint to the composite functor $\QSup\to^{\sI_0}\QCls\to^{\sS}\QCat$. The following proposition shows that $\sP$ can be obtained by restricting the codomain of $(-)^{\ra}:\QCat\lra\QCat$ to $\QSup$, and $\sS\sI_0$ is the inclusion functor $\QSup\ \to/^(->/\QCat$:
$$\bfig
\qtriangle|alr|/@{->}@<2pt>`@{->}@<-2pt>`@{->}@<2pt>/<1000,500>[\QCat`\QCls`\QSup;\sD`\sP`\sC_0]
\qtriangle|brl|/@{<-}@<-2pt>`@{<-^)}@<2pt>`@{<-}@<-2pt>/<1000,500>[\QCat`\QCls`\QSup;\sS``\sI_0]
\efig$$

\begin{prop} \label{P=C0D}
\begin{itemize}
\item[\rm (1)] For any $\CQ$-functor $f:\bbX\lra\bbY$, $\sP(f:\bbX\lra\bbY)=(f^{\ra}:\PX\lra\PY)$.
\item[\rm (2)] For any left adjoint $\CQ$-functor $f:\bbX\lra\bbY$ between complete $\CQ$-categories, $\sS\sI_0(f:\bbX\lra\bbY)=(f:\bbX\lra\bbY)$.
\end{itemize}
\end{prop}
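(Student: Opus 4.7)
For part (1), the key observation is that $\sC(\sD\bbX)=\PX$: a $\CQ$-relation $\mu:\bbX_0\oto\{q\}$ is a fixed point of $\bbX^*$ precisely when $\mu\circ\bbX=\mu$, i.e.\ precisely when $\mu\in\PX$. Given a $\CQ$-functor $f:\bbX\lra\bbY$, the morphism $\sC_0(\sD f)$ is (cf.\ Proposition \ref{dfra_fla_adjoint}) the composite of $\PX\hookrightarrow\PX_0$, the ``discrete'' push-forward induced by $f:\bbX_0\lra\bbY_0$ viewed as a type-preserving map of discrete $\CQ$-categories, and $\overline{\bbY^*}:\PY_0\lra\PY$. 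A short calculation using $\overline{\bbY^*}(\nu)=\nu\circ\bbY$ and reindexing the resulting double join along $y'=fx$ collapses this composite to $\mu\mapsto\mu\circ f^{\nat}$ for the cograph $f^{\nat}(y,x)=\bbY(y,fx)$; this is exactly the $\CQ$-functor $f^{\ra}:\PX\lra\PY$ induced by $f:\bbX\lra\bbY$, giving $\sP f=f^{\ra}$.

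For part (2), let $f:\bbX\lra\bbY$ be a $\sup$-preserving $\CQ$-functor between separated complete $\CQ$-categories. By Proposition \ref{la_continuous} and the definition of $(-)_0$, $\sI_0 f$ is the continuous map $f:(\bbX_0,(c_{\bbX})_0)\lra(\bbY_0,(c_{\bbY})_0)$ with underlying map unchanged, and $\sS$ likewise leaves the underlying map untouched; so the statement reduces to showing that the specialization $\CQ$-category of $(\bbX,c_{\bbX})$ coincides with $\bbX$ itself. By Equation (\ref{tc_tc0}) and Lemma \ref{tc_rda_tc}, its hom-arrows are $\bw_{\mu\in\sC(\bbX,c_{\bbX})}\mu(y)\rda\mu(x)$. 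The proof of Theorem \ref{C_I_adjoint} identifies $\sC(\bbX,c_{\bbX})=\{\sy_{\bbX}z\mid z\in\bbX_0\}$, so this becomes $\bw_{z\in\bbX_0}\bbX(y,z)\rda\bbX(x,z)$; setting $z=y$ gives the upper bound $\bbX(x,y)$, and the reverse inequality follows from transitivity $\bbX(y,z)\circ\bbX(x,y)\leq\bbX(x,z)$. Hence $\sS\sI_0\bbX=\bbX$ on objects, and consequently $\sS\sI_0 f=f$.

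Neither part is genuinely difficult; both are careful bookkeeping. The main subtlety in (1) is not to conflate the two meanings of ``$f^{\ra}$'': the operator on $\PX_0$ coming from the discrete $\CQ$-category structure versus the $\CQ$-functor $\PX\lra\PY$ induced by $f:\bbX\lra\bbY$; the bridge between them is the presheaf condition $\mu\circ\bbX=\mu$, which is what makes the double join collapse correctly. In (2) the real content is packaged into the identification $\sC(\bbX,c_{\bbX})=\sy_{\bbX}(\bbX_0)$ established already in the proof of Theorem \ref{C_I_adjoint}, after which the required equality of hom-arrows is a direct instance of the Yoneda lemma.
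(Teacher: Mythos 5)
Your proof is correct and follows essentially the same route as the paper: part (1) is the direct unwinding of $\sC_0\sD$ via $\sC(\bbX_0,\bbX^*)=\PX$ (which the paper dismisses as ``easy''), and your part (2) reduces, exactly as in the paper, to the computation $\bw_{z\in\bbX_0}\bbX(x',z)\rda\bbX(x,z)=\bbX(x,x')$ using Lemma \ref{tc_rda_tc}, Equation (\ref{tc_tc0}) and the identification $\sC(\bbX,c_{\bbX})=\{\sy_{\bbX}z\mid z\in\bbX_0\}$ from the proof of Theorem \ref{C_I_adjoint}. No gaps.
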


\begin{proof}
(1) is easy. For (2), it suffices to show that
$$\sS\sI_0\bbX=(\bbX_0,\widetilde{(c_{\bbX})_0}\rda\widetilde{(c_{\bbX})_0})=(\bbX_0,\widetilde{c_{\bbX}}\rda\widetilde{c_{\bbX}})=\bbX,$$
where the second equality follows from Equation (\ref{tc_tc0}). Indeed, for all $x,x'\in\bbX_0$,
\begin{align*}
(\widetilde{c_{\bbX}}\rda\widetilde{c_{\bbX}})(x,x')&=\bw_{\mu\in\sC(\bbX,c_{\bbX})}\mu(x')\rda\mu(x)&(\text{Lemma \ref{tc_rda_tc}})\\
&=\bw_{x''\in\bbX_0}(\sy_{\bbX}x'')(x')\rda(\sy_{\bbX}x'')(x)&(\sC(\bbX,c_{\bbX})=\{\sy_{\bbX}x\mid x\in\bbX_0\})\\
&=\bw_{x''\in\bbX_0}\bbX(x',x'')\rda\bbX(x,x'')\\
&=\bbX(x,x'),
\end{align*}
the conclusion thus follows.
\end{proof}

\begin{rem}
$\sP:\QCat\lra\QSup$ is known as the \emph{free cocompletion functor} \cite{Stubbe2005} of $\CQ$-categories\footnote{From the viewpoint of category theory, a $\CQ$-category $\bbX$ should be called \emph{cocomplete} if every $\mu\in\PX$ admits a supremum, and it is \emph{complete} if every $\lam\in\PdX$ has an infimum. But since a $\CQ$-category is cocomplete if and only if it is complete as we point out in Subsection \ref{Presheaves}, we do not distinguish cocompleteness and completeness of $\CQ$-categories in this paper.}, where $\sP$ is ``free'' since it is left adjoint to the forgetful functor $\QSup\lra\QCat$ (i.e., the inclusion functor). Proposition \ref{P=C0D} in fact provides a factorization of $\sP$ through $\QCls$.
\end{rem}

\section{Continuous $\CQ$-distributors} \label{Continuous_distributor}

In this section, we generalize continuous $\CQ$-functors to continuous $\CQ$-distributors as morphisms of $\CQ$-closure spaces.

\subsection{From continuous $\CQ$-functors to continuous $\CQ$-distributors}

Since $f^{\ra}=(f^{\nat})^*$ (see the definition of $f^{\ra}$ in Subsection \ref{Presheaves}), the continuity of a $\CQ$-functor $f:(\bbY,d)\lra(\bbX,c)$ between $\CQ$-closure spaces is completely characterized by the cograph $f^{\nat}:\bbX\oto\bbY$ of $f$, i.e.,
$$(f^{\nat})^*d\leq c(f^{\nat})^*:\PY\lra\PX.$$
If $f^{\nat}$ is replaced by an arbitrary $\CQ$-distributor $\zeta:\bbX\oto\bbY$, we have the following definition:

\begin{defn} \label{ClsDist_def}
A \emph{continuous $\CQ$-distributor} $\zeta:(\bbX,c)\oto(\bbY,d)$ between $\CQ$-closure spaces is a $\CQ$-distributor $\zeta:\bbX\oto\bbY$ such that $$\zeta^*d\leq c\zeta^*:\PY\lra\PX.$$
\end{defn}

With the local order inherited from $\QDist$, $\CQ$-closure spaces and continuous $\CQ$-distributors constitute a (large) quantaloid $\QClsDist$, for it is easy to verify that compositions and joins of continuous $\CQ$-distributors are still continuous $\CQ$-distributors.

Since the topologicity of a faithful functor $U:\CE\lra\CB$ is equivalent to the topologicity of $U^{\op}:\CE^{\op}\lra\CB^{\op}$ (see above Proposition \ref{U_topological}), $U$ is topological if all \emph{$U$-structured sinks} admit \emph{$U$-final liftings}, where $U$-structured sinks and $U$-final liftings are respectively given by $U^{\op}$-structured sources and $U^{\op}$-initial liftings. Explicitly, $U$ is topological if every $U$-structured sink $(f_i:UX_i\lra S)_{i\in I}$ admits a $U$-final lifting $(\check{f_i}:X_i\lra Y)_{i\in I}$ in the sense that any $\CB$-morphism $g:S\lra UZ$ lifts to an $\CE$-morphism $\check{g}:Y\lra Z$ as soon as every $\CB$-morphism $gf_i:UX_i\lra UZ$ lifts to an $\CE$-morphism $h_i:X_i\lra Z$ ($i\in I$).
$$\bfig
\qtriangle<800,500>[X_i`Y`Z;\check{f_i}`h_i`\check{g}]
\qtriangle(2000,0)<800,500>[UX_i`S`UZ;f_i`Uh_i`g]
\morphism(1300,250)/|->/<400,0>[`;U]
\efig$$
In this way we are able to prove:

\begin{prop} \label{Ud_topological}
The forgetful functor $\sU:\QClsDist\lra\QDist$ is topological.
\end{prop}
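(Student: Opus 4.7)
The plan is to adapt the proof of Proposition \ref{U_topological} by producing final liftings for $\sU$-structured sinks (an equivalent formulation of topologicity, since $\sU$ is topological if and only if $\sU^{\op}$ is). Given a (possibly large) family of $\CQ$-closure spaces $(\bbX_i,c_i)$ and $\CQ$-distributors $\zeta_i:\bbX_i\oto\bbY$ ($i\in I$), I must exhibit a $\CQ$-closure operator $d$ on $\PY$ such that (a) every $\zeta_i:(\bbX_i,c_i)\oto(\bbY,d)$ is continuous, and (b) for every $\CQ$-closure space $(\bbZ,e)$ and every $\CQ$-distributor $\eta:\bbY\oto\bbZ$, if each composite $\eta\circ\zeta_i:(\bbX_i,c_i)\oto(\bbZ,e)$ is continuous then so is $\eta:(\bbY,d)\oto(\bbZ,e)$.

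The Kan adjunction $\zeta_i^*\dv\zeta_{i*}$ from Proposition \ref{star_graph_adjoint} transposes the continuity condition $\zeta_i^*d\leq c_i\zeta_i^*$ into $d\leq\zeta_{i*}c_i\zeta_i^*$, which forces the largest admissible choice
$$d:=\bw_{i\in I}\zeta_{i*}c_i\zeta_i^*:\PY\lra\PY.$$
This is the exact distributor analogue of the formula $\bw_i f_i^{\la}d_if_i^{\ra}$ used in the proof of Proposition \ref{U_topological}. To see that $d$ is a $\CQ$-closure operator, note that each factor $\zeta_{i*}c_i\zeta_i^*$ is extensive via the unit $1_{\PY}\leq\zeta_{i*}\zeta_i^*$ and the extensivity of $c_i$, and idempotent via the counit $\zeta_i^*\zeta_{i*}\leq 1_{\PX_i}$ combined with $c_ic_i\leq c_i$; a pointwise meet of $\CQ$-closure operators on a separated complete $\CQ$-category remains a $\CQ$-closure operator by the standard argument (the meet dominates $1$ and satisfies $cc\leq c_i c_i\leq c_i$ for every $i$, hence $cc\leq c$).

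Continuity of each $\zeta_i$ is then immediate: $\zeta_i^*d\leq\zeta_i^*\zeta_{i*}c_i\zeta_i^*\leq c_i\zeta_i^*$. For the universal property, the contravariant $2$-functoriality of $(-)^*$ gives $(\eta\circ\zeta_i)^*=\zeta_i^*\eta^*$, so the continuity of each composite yields $\zeta_i^*\eta^*e\leq c_i\zeta_i^*\eta^*$; transposing back across $\zeta_i^*\dv\zeta_{i*}$ produces $\eta^*e\leq\zeta_{i*}c_i\zeta_i^*\eta^*$ for every $i$, and taking the meet over $i$ gives $\eta^*e\leq d\eta^*$, which is precisely continuity of $\eta:(\bbY,d)\oto(\bbZ,e)$.

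No genuine obstacle is expected beyond identifying the correct formula for $d$: once it is in hand, everything reduces to routine manipulations with the unit and counit of the Kan adjunction and the stability of the closure-operator axioms under pointwise meets. The only minor bookkeeping point is that $I$ may be a proper class, but since $\PY$ is a separated complete $\CQ$-category (Example \ref{PX_tensor_complete}), all the required pointwise meets exist in $\PY$ and the meet formula is well-defined.
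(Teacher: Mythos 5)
Your proposal is correct and takes essentially the same route as the paper: the paper likewise works with $\sU$-structured sinks and defines the $\sU$-final structure by the very same formula $d=\bw_{i\in I}(\zeta_i)_*c_i\zeta_i^*$, merely leaving to the reader the verifications (closure-operator axioms, continuity, universal property) that you spell out via the unit and counit of the Kan adjunction. The only omission is the trivial remark that $\sU$ is faithful, which the definition of topologicity presupposes.
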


\begin{proof}
$\sU$ is obviously faithful. Given a (possibly large) family of $\CQ$-closure spaces $(\bbX_i,c_i)$ and $\CQ$-distributors $\zeta_i:\bbX_i\oto\bbY$ $(i\in I)$, we must find a $\CQ$-closure space $(\bbY,d)$ such that
\begin{itemize}
\item every $\zeta_i:(\bbX_i,c_i)\oto(\bbY,d)$ is a continuous $\CQ$-distributor, and
\item for every $\CQ$-closure space $(\bbZ,e)$, any $\CQ$-distributor $\eta:\bbY\oto\bbZ$ becomes a continuous $\CQ$-distributor $\eta:(\bbY,d)\oto(\bbZ,e)$ whenever all $\eta\circ\zeta_i:(\bbX_i,c_i)\oto(\bbZ,e)$ $(i\in I)$ are continuous $\CQ$-distributors.
\end{itemize}
To this end, one simply defines $d=\displaystyle\bw_{i\in I}(\zeta_i)_*c_i\zeta_i^*$, i.e., the meet of the composite $\CQ$-distributors
$$\bfig
\morphism[\PY`\PX_i;\zeta_i^*]
\morphism(500,0)[\PX_i`\PX_i;c_i]
\morphism(1000,0)[\PX_i`\PY.;(\zeta_i)_*]
\place(250,0)[\circ] \place(750,0)[\circ] \place(1250,0)[\circ]
\efig$$
Then $d$ is the $\sU$-final structure on $\bbY$ with respect to the $\sU$-structured sink $(\zeta_i:\bbX_i\oto\bbY)_{i\in I}$.
\end{proof}

From the motivation of continuous $\CQ$-distributors one has an obvious contravariant 2-functor
\begin{equation} \label{cograph_Cls}
(-)^{\nat}:\QCatCls\lra(\QClsDist)^{\op}
\end{equation}
that sends a continuous $\CQ$-functor $f:(\bbY,d)\lra(\bbX,c)$ to the continuous $\CQ$-distributor $f^{\nat}:(\bbX,c)\oto(\bbY,d)$. Conversely, the following proposition shows that continuous $\CQ$-distributors can be characterized by continuous $\CQ$-functors, which induces a 2-functor
\begin{equation} \label{Kan_Cls}
(-)^*:(\QClsDist)^{\op}\lra\QCatCls.
\end{equation}

\begin{prop}
Let $(\bbX,c)$, $(\bbY,d)$ be $\CQ$-closure spaces and $\zeta:\bbX\oto\bbY$ a $\CQ$-distributor. Then $\zeta:(\bbX,c)\oto(\bbY,d)$ is a continuous $\CQ$-distributor if, and only if, $\zeta^*:(\PY,d^{\ra})\lra(\PX,c^{\ra})$ is a continuous $\CQ$-functor.
\end{prop}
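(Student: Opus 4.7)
My plan is to prove the two implications separately, using the 2-functoriality of $(-)^{\ra}:\QCat\lra\QCat$ in one direction and the Yoneda naturality together with the fully faithfulness of the Yoneda embedding in the other.

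For the ``only if'' direction, I would start from the inequality $\zeta^*d\leq c\zeta^*:\PY\lra\PX$ expressing the continuity of $\zeta$, and simply apply the 2-functor $(-)^{\ra}$ to both sides. Since $(-)^{\ra}$ preserves the local order (being a 2-functor) and composition, this yields $(\zeta^*)^{\ra}d^{\ra}\leq c^{\ra}(\zeta^*)^{\ra}:\sP(\PY)\lra\sP(\PX)$, which by Proposition \ref{continuous_functor_condition}(i) says exactly that $\zeta^*:(\PY,d^{\ra})\lra(\PX,c^{\ra})$ is a continuous $\CQ$-functor.

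For the converse, I would precompose the hypothesis $(\zeta^*)^{\ra}d^{\ra}\leq c^{\ra}(\zeta^*)^{\ra}$ with the Yoneda embedding $\sy_{\PY}:\PY\lra\sP(\PY)$. Using Proposition \ref{Yoneda_natural} applied to the $\CQ$-functors $d:\PY\lra\PY$, $\zeta^*:\PY\lra\PX$, and $c:\PX\lra\PX$, I rewrite the two sides as
\[
(\zeta^*)^{\ra}d^{\ra}\sy_{\PY}=(\zeta^*)^{\ra}\sy_{\PY}d=\sy_{\PX}\zeta^*d,
\qquad
c^{\ra}(\zeta^*)^{\ra}\sy_{\PY}=c^{\ra}\sy_{\PX}\zeta^*=\sy_{\PX}c\zeta^*.
\]
Hence $\sy_{\PX}\zeta^*d\leq\sy_{\PX}c\zeta^*$, and since $\sy_{\PX}$ is fully faithful (Lemma \ref{Yoneda_lemma}) it reflects the pointwise (pre)order of $\CQ$-functors, so $\zeta^*d\leq c\zeta^*$.

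The argument is essentially mechanical once these two tools are on the table; the only genuine point to check carefully is the order-reflection step, where one must recall that a fully faithful $\CQ$-functor $h$ satisfies $f\leq g\iff hf\leq hg$ in the pointwise preorder, a direct consequence of the definition $h(x,x')=\bbX(x,x')$ on hom-arrows.
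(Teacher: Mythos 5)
Your proposal is correct and follows essentially the same route as the paper: the ``only if'' direction by applying the 2-functor $(-)^{\ra}$, and the ``if'' direction by precomposing with $\sy_{\PY}$ and using the naturality of the Yoneda embedding (Proposition \ref{Yoneda_natural}). The only (immaterial) difference is at the final cancellation step, where the paper composes on the left with $\sup_{\PX}$ and uses $\sup_{\PX}\sy_{\PX}=1_{\PX}$, while you invoke order-reflection by the fully faithful $\sy_{\PX}$ --- two equivalent ways of removing the Yoneda embedding.
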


\begin{proof}
The 2-functoriality of $(-)^{\ra}:\QCat\lra\QCat$ ensures that $(\PX,c^{\ra})$, $(\PY,d^{\ra})$ are both $\CQ$-closure spaces and $\zeta^*d\leq c\zeta^*$ implies $(\zeta^*)^{\ra}d^{\ra}\leq c^{\ra}(\zeta^*)^{\ra}$. To show that $(\zeta^*)^{\ra}d^{\ra}\leq c^{\ra}(\zeta^*)^{\ra}$ implies $\zeta^*d\leq c\zeta^*$, it is not difficult to see
\begin{align*}
\zeta^*d&={\sup}_{\PX}\sy_{\PX}\zeta^*d&({\sup}_{\PX}\sy_{\PX}=1_{\PX})\\
&={\sup}_{\PX}(\zeta^*d)^{\ra}\sy_{\PY}&(\text{Proposition \ref{Yoneda_natural}})\\
&\leq{\sup}_{\PX}(c\zeta^*)^{\ra}\sy_{\PY}\\
&={\sup}_{\PX}\sy_{\PX}c\zeta^*&(\text{Proposition \ref{Yoneda_natural}})\\
&=c\zeta^*,&({\sup}_{\PX}\sy_{\PX}=1_{\PX})
\end{align*}
as desired.
\end{proof}

Here the functors (\ref{cograph_Cls}) and (\ref{Kan_Cls}) may be thought of as being lifted from the functors $(-)^{\nat}:\QCat\lra(\QDist)^{\op}$ and $(-)^*:(\QDist)^{\op}\lra\QCat$ through the topological forgetful functors, as the following commutative diagram illustrates:
$$\bfig
\square|alra|/@{->}@<3pt>`->`->`@{->}@<3pt>/<1200,500>[\QCatCls`(\QClsDist)^{\op}`\QCat`(\QDist)^{\op};(-)^{\nat}`\sU`\sU^{\op}`(-)^{\nat}]
\morphism(1200,500)|b|/@{->}@<3pt>/<-1200,0>[(\QClsDist)^{\op}`\QCatCls;(-)^*]
\morphism(1200,0)|b|/@{->}@<3pt>/<-1200,0>[(\QDist)^{\op}`\QCat;(-)^*]
\efig$$


%

\subsection{Continuous $\CQ$-distributors subsume $\sup$-preserving $\CQ$-functors}

In general, the 2-functor $\sC:\QCatCls\lra\QSup$ (see Proposition \ref{dfra_fla_adjoint}) is not full; that is, not every $\sup$-preserving $\CQ$-functor $\sC(\bbX,c)\lra\sC(\bbY,d)$ is the image of some continuous $\CQ$-functor $f:(\bbX,c)\lra(\bbY,d)$ under $\sC$. However, if we extend $\sC$ along $(-)^{\nat}:\QCatCls\lra(\QClsDist)^{\op}$, then we are able to get a full 2-functor $\Cd:(\QClsDist)^{\op}\lra\QSup$:
\begin{equation} \label{C_cograph_Cd}
\bfig
\btriangle/<-`->`->/<1200,500>[(\QClsDist)^{\op}`\QCatCls`\QSup;(-)^{\nat}`\Cd`\sC]
\efig
\end{equation}

Before proceeding, we present the following characterizations of continuous $\CQ$-distributors:

\begin{prop} \label{continuous_dist_condition}
Let $(\bbX,c)$, $(\bbY,d)$ be $\CQ$-closure spaces and $\zeta:\bbX\oto\bbY$ a $\CQ$-distributor. The following statements are equivalent:
\begin{itemize}
\item[\rm (i)] $\zeta:(\bbX,c)\oto(\bbY,d)$ is a continuous $\CQ$-distributor.
\item[\rm (ii)] $c\zeta^*d\leq c\zeta^*$, thus $c\zeta^*d=c\zeta^*$.
\item[\rm (iii)] $d\zeta_*c\leq\zeta_*c$, thus $d\zeta_*c=\zeta_*c$.
\item[\rm (iv)] $\zeta_*\mu\in\sC(\bbY,d)$ whenever $\mu\in\sC(\bbX,c)$.
\end{itemize}
\end{prop}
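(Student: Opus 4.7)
The plan is to establish the four equivalences by cycling through them in pairs, exploiting only the Kan adjunction $\zeta^*\dv\zeta_*$ together with the elementary closure properties $1_{\PX}\leq c$, $cc=c$ (and the same for $d$ on $\PY$).

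First, (i) $\Leftrightarrow$ (ii) is immediate: applying $c$ on the left of $\zeta^*d\leq c\zeta^*$ and using $cc=c$ gives $c\zeta^*d\leq c\zeta^*$, while conversely (ii) combined with $\zeta^*d\leq c\zeta^*d$ (from $1\leq c$) gives back $\zeta^*d\leq c\zeta^*$. The reverse inequalities yielding the claimed equalities come from $1_{\PY}\leq d$ in both cases.

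Next, for (i) $\Leftrightarrow$ (iii), I would transpose the continuity condition via the adjunction. Since $\zeta^*\dv\zeta_*:\PX\lra\PY$, the inequality $\zeta^*d\leq c\zeta^*$ (between $\CQ$-functors $\PY\lra\PX$) is equivalent to $d\leq\zeta_*c\zeta^*$ (between $\CQ$-functors $\PY\lra\PY$). From this, composing with $\zeta_*c$ on the right and using the counit $\zeta^*\zeta_*\leq 1_{\PX}$ together with $cc=c$ yields
\[ d\zeta_*c\leq\zeta_*c\zeta^*\zeta_*c\leq\zeta_*c\,c=\zeta_*c, \]
which is (iii). Conversely, starting from (iii) and using the unit $1_{\PY}\leq\zeta_*\zeta^*$ together with $1_{\PX}\leq c$, we get
\[ d\leq d\zeta_*\zeta^*\leq d\zeta_*c\zeta^*\leq\zeta_*c\zeta^*, \]
which transposes back to $\zeta^*d\leq c\zeta^*$, i.e., (i). The equality in (iii) then follows from $1_{\PY}\leq d$.

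Finally, (iii) $\Leftrightarrow$ (iv) is a direct translation between the global and pointwise formulations. Given (iii), any $\mu\in\sC(\bbX,c)$ satisfies $c\mu=\mu$, so $d\zeta_*\mu=d\zeta_*c\mu\leq\zeta_*c\mu=\zeta_*\mu$, and combining with $\zeta_*\mu\leq d\zeta_*\mu$ we obtain $\zeta_*\mu\in\sC(\bbY,d)$. Conversely, since $c\mu\in\sC(\bbX,c)$ for every $\mu\in\PX$, applying (iv) to $c\mu$ gives $d\zeta_*c\mu=\zeta_*c\mu$ for all $\mu$, hence $d\zeta_*c=\zeta_*c$.

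There is no real obstacle; the only thing to be careful about is bookkeeping the direction of the adjoint transposition $\zeta^*(-)\leq(-)\iff(-)\leq\zeta_*(-)$, since (i) and (iii) put the closure operators on opposite sides of the Kan adjoints.
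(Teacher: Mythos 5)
Your proof is correct and uses essentially the same ingredients as the paper's: the unit and counit of the Kan adjunction $\zeta^*\dv\zeta_*$ together with $1\leq c$, $cc=c$ (and likewise for $d$). The only difference is organizational --- you prove pairwise biconditionals where the paper runs the cycle (i)$\Rightarrow$(ii)$\Rightarrow$(iii)$\Rightarrow$(i) --- and you spell out the (iii)$\Leftrightarrow$(iv) translation that the paper dismisses as trivial.
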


\begin{proof}
(i)${}\Lra{}$(ii): If $\zeta^*d\leq c\zeta^*$, then $c\zeta^*d\leq cc\zeta^*=c\zeta^*$.

(ii)${}\Lra{}$(iii): This follows from $\zeta^*d\zeta_*c\leq c\zeta^*d\zeta_*c=c\zeta^*\zeta_*c\leq cc=c$ and $\zeta^*\dv\zeta_*$.

(iii)${}\Lra{}$(i): $\zeta^*d\leq c\zeta^*$ follows immediately from $d\leq d\zeta_* \zeta^*\leq d\zeta_*c\zeta^*=\zeta_*c\zeta^*$ and $\zeta^*\dv\zeta_*$.

(iii)$\iff$(iv) is trivial.
\end{proof}

By Proposition \ref{continuous_dist_condition}(iv), the $\CQ$-functor $\zeta_*:\PX\lra\PY$ may be restricted to a $\CQ$-functor
$$\zetla:\sC(\bbX,c)\lra\sC(\bbY,d),\quad\mu\mapsto\zeta_*\mu.$$
As the general version of Proposition \ref{dfra_fla_adjoint}, we show that
$$\zetra:=(\sC(\bbY,d)\ \to/^(->/\PY\to^{\zeta^*}\PX\to^{\oc}\sC(\bbX,c))$$
is left adjoint to $\zetla$ for any continuous $\CQ$-distributor $\zeta:(\bbX,c)\oto(\bbY,d)$:

\begin{prop} \label{czeta_la}
For a continuous $\CQ$-distributor $\zeta:(\bbX,c)\oto(\bbY,d)$,
$$\zetra\dv\zetla:\sC(\bbX,c)\lra\sC(\bbY,d).$$
\end{prop}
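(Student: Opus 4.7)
The plan is to obtain $\zetra\dv\zetla$ by chaining together three adjunctions already established in the excerpt: the reflection of $\PX$ onto $\sC(\bbX,c)$, the Kan adjunction $\zeta^*\dv\zeta_*$, and the fully faithful inclusion of $\sC(\bbY,d)$ into $\PY$. Since the hom-arrows of a $\CQ$-subcategory agree with those of the ambient $\CQ$-category, this amounts to a sequence of equalities between $\CQ$-arrows rather than a genuine calculation.

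First, I would fix notation for the inclusions $j:\sC(\bbX,c)\hookrightarrow\PX$ and $k:\sC(\bbY,d)\hookrightarrow\PY$, whose codomain restrictions $\oc:\PX\lra\sC(\bbX,c)$ and $\od:\PY\lra\sC(\bbY,d)$ are left adjoints by Proposition \ref{closure_operator_adjoint_inclusion}. By definition $\zetra=\oc\zeta^*k$, and Proposition \ref{continuous_dist_condition}(iv) says that for $\mu\in\sC(\bbX,c)$ one has $\zeta_*\mu=\zeta_*j\mu\in\sC(\bbY,d)$, so $\zetla$ is the (well-defined) codomain restriction of $\zeta_*\circ j$; in particular $k\zetla=\zeta_*j$.

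Next, for $\lam\in\sC(\bbY,d)$ and $\mu\in\sC(\bbX,c)$, I would compute the hom-arrow
\[
\sC(\bbX,c)(\zetra\lam,\mu)=\PX(j\zetra\lam,j\mu)=\PX(\zeta^*k\lam,j\mu),
\]
where the first equality uses the fully faithfulness of $j$ and the second uses the adjunction $\oc\dv j$ in the form $\PX(jc'\nu,\xi)=\PX(\nu,\xi)$ when $\xi$ is closed (or, equivalently, the triangle identity giving $j\oc\zeta^*k\lam=c\zeta^*k\lam$ and the fact that $\PX(c\nu,\xi)=\PX(\nu,\xi)$ for $\xi$ closed). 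Then the Kan adjunction $\zeta^*\dv\zeta_*$ yields
\[
\PX(\zeta^*k\lam,j\mu)=\PY(k\lam,\zeta_*j\mu)=\PY(k\lam,k\zetla\mu),
\]
and finally the fully faithfulness of $k$ collapses this to $\sC(\bbY,d)(\lam,\zetla\mu)$. Chaining these equalities gives $\sC(\bbX,c)(\zetra\lam,\mu)=\sC(\bbY,d)(\lam,\zetla\mu)$, which is precisely $\zetra\dv\zetla$.

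I do not anticipate any real obstacle: the only subtlety is to justify the first step carefully, namely that the reflection $\oc\dv j$ supplies $\PX(\oc\nu,\mu)=\PX(\nu,j\mu)$ for closed $\mu$, which is immediate from Proposition \ref{closure_operator_adjoint_inclusion}. Everything else is formal chaining. If desired, one could present the argument equivalently by checking the unit and counit inequalities $1_{\sC(\bbY,d)}\leq\zetla\zetra$ and $\zetra\zetla\leq 1_{\sC(\bbX,c)}$ directly from $1_{\PY}\leq\zeta_*\zeta^*$, $\zeta^*\zeta_*\leq 1_{\PX}$, $1_{\PX}\leq j\oc=c$, and $\od k=1_{\sC(\bbY,d)}$ (the latter because $k$ is fully faithful and $\sC(\bbY,d)$ is separated), together with monotonicity of $\oc$ and $\zeta^*$.
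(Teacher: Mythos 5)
Your argument is correct and is essentially the paper's proof: both reduce the claim to the Kan adjunction $\PX(\zeta^*\lam,\mu)=\PY(\lam,\zeta_*\mu)$ together with the identity $\PX(c\nu,\mu)=\PX(\nu,\mu)$ for closed $\mu$. The only (cosmetic) difference is that you obtain the latter identity by citing the reflection $\oc\dv j$ from Proposition \ref{closure_operator_adjoint_inclusion}, whereas the paper verifies it by a short direct computation with the closure operator ($\PX(\zeta^*\lam,\mu)\leq\PX(c\zeta^*\lam,c\mu)=\PX(c\zeta^*\lam,\mu)\leq\PX(\zeta^*\lam,\mu)$).
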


\begin{proof}
It suffices to prove
$$\PX(c\zeta^*\lam,\mu)=\PX(\zeta^*\lam,\mu)$$
for all $\lam\in\sC(\bbY,d)$, $\mu\in\sC(\bbX,c)$ since one already has $\PY(\lam,\zeta_*\mu)=\PX(\zeta^*\lam,\mu)$. Indeed,
\begin{align*}
\PX(\zeta^*\lam,\mu)&\leq\PX(c\zeta^*\lam,c\mu)&(c\ \text{is a}\ \CQ\text{-functor})\\
&=\PX(c\zeta^*\lam,\mu)&(\mu\in\sC(\bbX,c))\\
&=\mu\lda c\zeta^*\lam\\
&\leq\mu\lda \zeta^*\lam&(c\ \text{is a}\ \CQ\text{-closure operator})\\
&=\PX(\zeta^*\lam,\mu),
\end{align*}
and the conclusion follows.
\end{proof}

Now we are ready to show that the assignment $(\bbX,c)\mapsto\sC(\bbX,c)$ induces a full 2-functor
$$\Cd:(\QClsDist)^{\op}\lra\QSup$$
that maps a continuous $\CQ$-distributor $\zeta:(\bbX,c)\oto(\bbY,d)$ to the left adjoint $\CQ$-functor
$$\zetra:\sC(\bbY,d)\lra\sC(\bbX,c),$$
which obviously makes the diagram (\ref{C_cograph_Cd}) commute:

\begin{prop} \label{Cd_functor}
$\Cd:(\QClsDist)^{\op}\lra\QSup$ is a full 2-functor. Moreover, $\Cd$ is a quantaloid homomorphism.
\end{prop}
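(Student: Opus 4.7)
The plan is to dispatch functoriality, 2-functoriality, and preservation of joins in short order, then concentrate on fullness, which is the real content. For identities, the hom-distributor $\bbX:\bbX\oto\bbX$ satisfies $\bbX^{*}=1_{\PX}$, forcing $\bbX^{\triangleright}=1_{\sC(\bbX,c)}$. For composition of $\zeta:(\bbX,c)\oto(\bbY,d)$ with $\eta:(\bbY,d)\oto(\bbZ,e)$, the identity $(\eta\circ\zeta)^{*}=\zeta^{*}\eta^{*}$ gives $(\eta\circ\zeta)^{\triangleright}\lam=\oc\zeta^{*}\eta^{*}\lam$, whereas $\zetra\etra\lam=\oc\zeta^{*}d\eta^{*}\lam$; the two coincide once the intermediate $d$ is absorbed via $c\zeta^{*}d=c\zeta^{*}$ from Proposition \ref{continuous_dist_condition}(ii). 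Preservation of the local order on 2-cells is immediate from $\zeta\leq\zeta'\Rightarrow\zeta^{*}\leq(\zeta')^{*}$ evaluated pointwise. For join preservation in hom-sets, bilinearity of composition in $\QDist$ gives $(\bv\zeta_i)^{*}=\bv\zeta_i^{*}$, and Corollary \ref{closure_system_sup} combined with the idempotence of $c$ turns this into $(\bv\zeta_i)^{\triangleright}=\bigsqcup\zeta_i^{\triangleright}$ in $\QSup(\sC(\bbY,d),\sC(\bbX,c))$.

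For fullness, let $g:\sC(\bbY,d)\lra\sC(\bbX,c)$ be $\sup$-preserving, and take as candidate preimage the $\CQ$-distributor $\zeta:\bbX\oto\bbY$ given by the transpose of the composite $\CQ$-functor $h:=g\od\sy_{\bbY}:\bbY\lra\sC(\bbX,c)$ (viewed inside $\PX$ via the inclusion); explicitly, $\zeta(x,y)=(hy)(x)$. Two facts must be verified. First, $\zetra=g$: for $\lam\in\sC(\bbY,d)$, the Yoneda decomposition $\lam=\bv_{y}\lam(y)\circ\sy_{\bbY}y$ in $\PY$ transports along the $\sup$-preserving $\od$ to $\lam=\bigsqcup_{y}\lam(y)\otimes_{\sC(\bbY,d)}\od\sy_{\bbY}y$ in $\sC(\bbY,d)$; applying $\sup$-preserving $g$ and re-expressing the resulting tensors and underlying joins in $\sC(\bbX,c)$ via Corollary \ref{closure_system_sup} yields $g(\lam)=c(\bv_{y}\lam(y)\circ hy)$, and a direct expansion of $\zetra(\lam)=c(\lam\circ\zeta)$ produces the same expression. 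Second, $\zeta$ is continuous: by Proposition \ref{continuous_dist_condition}(iv) it suffices that $\zeta_{*}\mu\in\sC(\bbY,d)$ for every $\mu\in\sC(\bbX,c)$. Here $(\zeta_{*}\mu)(y)=\mu\lda(hy)=\sC(\bbX,c)(hy,\mu)$ since $hy,\mu\in\sC(\bbX,c)$; writing $r:\sC(\bbX,c)\lra\sC(\bbY,d)$ for the right adjoint of $g$ (which exists by Proposition \ref{la_sup_preserving}), the successive adjunctions $g\dv r$ and $\od\dv j$, with $j$ the inclusion of $\sC(\bbY,d)$ into $\PY$, together with Yoneda collapse $(\zeta_{*}\mu)(y)$ to $(jr\mu)(y)$, exhibiting $\zeta_{*}\mu$ as the closed presheaf $jr\mu$.

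The principal obstacle is continuity of the constructed $\zeta$ in the fullness step: it is precisely the chain of adjunctions $g\dv r$ and $\od\dv j$ together with Yoneda that forces $\zeta_{*}\mu$ to land in $\sC(\bbY,d)$, and without the automatic existence of a right adjoint to $g$ (supplied by Proposition \ref{la_sup_preserving}) the construction would not close up. Everything else reduces to routine manipulation of Kan adjunctions and the calculus of closure systems assembled in Section \ref{QCat}.
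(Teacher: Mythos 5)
Your proof is correct, and its skeleton matches the paper's: the same functoriality computation (absorbing the intermediate $d$ via $c\zeta^*d=c\zeta^*$), the same candidate preimage $\zeta$ defined as the transpose of $g\,\od\,\sy_{\bbY}$ followed by the inclusion into $\PX$, and the same identification $\zetra\lam=c\bigl(\bv_{y}\lam(y)\circ(\tzeta y)\bigr)=g\lam$ via the Yoneda decomposition of $\lam$ and preservation of tensors and underlying joins by the left adjoints $\od$ and $g$. Where you genuinely diverge is the continuity of the constructed $\zeta$. The paper first proves the square $\oc\,\zeta^*=f\,\od$ commutes on \emph{all} of $\PY$ (not just on closed presheaves) and then reads off continuity from $\oc\,\zeta^*d=f\,\od\,d=f\,\od=\oc\,\zeta^*$ via Proposition \ref{continuous_dist_condition}(ii); you instead verify criterion (iv) directly, computing $(\zeta_*\mu)(y)=\PX(\tzeta y,\mu)=\sC(\bbX,c)(g\,\od\,\sy_{\bbY}y,\mu)$ and collapsing it to $(jr\mu)(y)$ through the adjunctions $g\dv r$ and $\od\dv j$ plus Yoneda, so that $\zeta_*\mu=jr\mu$ is visibly closed. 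Your route buys a pointwise, adjunction-chasing identification of $\zeta_*$ on closed presheaves (it even tells you what $\Cd\zeta$'s right adjoint is), at the cost of invoking the existence of $r$; the paper's route gets continuity for free from a single functional identity it needs anyway for $\Cd\zeta=f$. Your treatment of join preservation via Corollary \ref{closure_system_sup} plus idempotence of $c$ (i.e.\ $c(\bv_i c\nu_i)=c(\bv_i\nu_i)$) is a harmless variant of the paper's appeal to Proposition \ref{la_tensor_preserving}. Both arguments are sound.
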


\begin{proof}
{\bf Step 1.} $\Cd:(\QClsDist)^{\op}\lra\QSup$ is a functor. For this one must check that
$$\zetra\etra=(\Cd\zeta)(\Cd\eta)=\Cd(\eta\circ\zeta)=(\eta\circ\zeta)^{\triangleright}:\sC(\bbZ,e)\lra\sC(\bbX,c)$$
for any continuous $\CQ$-distributors $\zeta:(\bbX,c)\oto(\bbY,d)$, $\eta:(\bbY,d)\oto(\bbZ,e)$. It suffices to show that
$$c\zeta^*d\eta^*=c(\eta\circ\zeta)^*=c\zeta^*\eta^*:\PZ\lra\PX.$$
On one hand, by Definition \ref{ClsDist_def} one immediately has
$$c\zeta^*d\eta^*\leq cc\zeta^*\eta^*=c\zeta^*\eta^*$$
since $c$ is idempotent. On the other hand, $c\zeta^*\eta^*\leq c\zeta^*d\eta^*$ is trivial since $1_{\PY}\leq d$.

{\bf Step 2.} $\Cd:(\QClsDist)^{\op}\lra\QSup$ is full. For all $\CQ$-closure spaces $(\bbX,c)$, $(\bbY,d)$, one needs to show that the map
$$\Cd:\QClsDist((\bbX,c),(\bbY,d))\lra\QSup(\sC(\bbY,d),\sC(\bbX,c))$$
is surjective.

For each left adjoint $\CQ$-functor $f:\sC(\bbY,d)\lra\sC(\bbX,c)$, define a $\CQ$-distributor $\zeta:\bbX\oto\bbY$ through its transpose (see Equation (\ref{tphi_def}))
\begin{equation} \label{tzeta_def}
\tzeta:=(\bbY\to^{\sy_{\bbY}}\PY\to^{\od}\sC(\bbY,d)\to^f\sC(\bbX,c)\ \to/^(->/\PX).
\end{equation}
We claim that $\zeta:(\bbX,c)\oto(\bbY,d)$ is a continuous $\CQ$-distributor and $\Cd\zeta=f$.

First, we show that the diagram
\begin{equation} \label{czeta_fd}
\bfig
\square<800,500>[\PY`\PX`\sC(\bbY,d)`\sC(\bbX,c);\zeta^*`\od`\oc`f]
\efig
\end{equation}
is commutative. Indeed, it follows from Example \ref{PX_tensor_complete} and Corollary \ref{closure_system_sup} that tensors in $\sC(\bbX,c)$ are given by
\begin{equation} \label{tensor_cPX}
u\otimes\mu=c(u\circ\mu)
\end{equation}
for all $\mu\in\sC(\bbX,c)$, $u\in\sP\{|\mu|\}$. In addition, $\oc:\PX\lra\sC(\bbX,c)$ and $\od:\PY\lra\sC(\bbY,d)$ are both left adjoint $\CQ$-functors by Proposition \ref{closure_operator_adjoint_inclusion}, thus so is
$$f\od:\PY\lra\sC(\bbY,d)\lra\sC(\bbX,c).$$
For all $\lam\in\PY$, since the presheaf $\lam\circ\zeta$ can be written as the pointwise join of the $\CQ$-distributors $\lam(y)\circ(\tzeta y)$ $(y\in\bbY_0)$, one has
\begin{align*}
\oc\zeta^*\lam&=\oc(\lam\circ\zeta)\\
&=\oc\Big(\bv_{y\in\bbY_0}\lam(y)\circ(\tzeta y)\Big)\\
&=\oc\Big(\bv_{y\in\bbY_0}\lam(y)\circ(f\od\sy_{\bbY}y)\Big)&\text{(Equation (\ref{tzeta_def}))}\\
&=\bigsqcup_{y\in\bbY_0}\oc(\lam(y)\circ(f\od\sy_{\bbY}y))&\text{(Proposition \ref{la_tensor_preserving})}\\
&=\bigsqcup_{y\in\bbY_0}\lam(y)\otimes(f\od\sy_{\bbY}y)&\text{(Equation (\ref{tensor_cPX}))}\\
&=f\od\Big(\bv_{y\in\bbY_0}\lam(y)\circ(\sy_{\bbY}y)\Big)&\text{(Proposition \ref{la_tensor_preserving})}\\
&=f\od(\lam\circ\bbY)\\
&=f\od\lam,
\end{align*}
where $\bv$ and $\bigsqcup$ respectively denote the underlying joins in $\PX$ and $\sC(\bbX,c)$.

Second, by applying the commutative diagram (\ref{czeta_fd}) one obtains
$$\oc\zeta^*d=f\od d=f\od=\oc\zeta^*\quad\text{and}\quad(\Cd\zeta)\lam=\zetra\lam=\oc\zeta^*\lam=f\od\lam=f\lam$$
for all $\lam\in\sC(\bbY,d)$, where the first equation implies the continuity of $\zeta:(\bbX,c)\oto(\bbY,d)$ by Proposition \ref{continuous_dist_condition}(ii), and the second equation is exactly $\Cd\zeta=f$.

{\bf Step 3.} $\Cd:(\QClsDist)^{\op}\lra\QSup$ is a quantaloid homomorphism. To show that $\Cd$ preserves joins of continuous $\CQ$-distributors, let $\{\zeta_i\}_{i\in I}\subseteq\QClsDist((\bbX,c),(\bbY,d))$ and one must check
$$\Big(\bv_{i\in I}\zeta_i\Big)^{\triangleright}=\bigsqcup_{i\in I}\zetra_i:\sC(\bbY,d)\lra\sC(\bbX,c),$$
where $\bigsqcup$ denotes the pointwise join in $\QSup(\sC(\bbY,d),\sC(\bbX,c))$ inherited from $\sC(\bbX,c)$. Indeed, since $\oc:\PX\lra\sC(\bbX,c)$ is a left adjoint $\CQ$-functor, one has
$$\Big(\bv_{i\in I}\zeta_i\Big)^{\triangleright}\lam=\oc\Big(\bv_{i\in I}\zeta_i\Big)^*\lam=\oc\Big(\lam\circ\bv_{i\in I}\zeta_i\Big)=\oc\Big(\bv_{i\in I}\lam\circ\zeta_i\Big)=\bigsqcup_{i\in I}\oc(\lam\circ\zeta_i)=\bigsqcup_{i\in I}\oc\zeta_i^*\lam=\bigsqcup_{i\in I}\zetra_i\lam$$
for all $\lam\in\sC(\bbY,d)$, where the fourth equality follows from Proposition \ref{la_tensor_preserving}, completing the proof.
\end{proof}

Let $\Id$ be the composite 2-functor
$$\Id:=(\QSup\to^{\sI}\QCatCls\to^{(-)^{\nat}}(\QClsDist)^{\op}).$$
Since $\sC\sI$ is naturally isomorphic to the identity 2-functor on $\QSup$ (see the first paragraph of the proof of Theorem \ref{C_I_adjoint}), one soon has:

\begin{prop} \label{Cd_Id_id}
$\Cd\Id$ is naturally isomorphic to the identity 2-functor on $\QSup$.
\end{prop}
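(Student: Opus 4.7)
The plan is essentially to reduce the claim to a fact already established inside the proof of Theorem \ref{C_I_adjoint}. First I would observe that the commutative diagram (\ref{C_cograph_Cd}) gives the equality of 2-functors $\Cd \circ (-)^{\nat} = \sC : \QCatCls \lra \QSup$. Composing both sides on the right with $\sI : \QSup \lra \QCatCls$ yields
\[
\Cd\Id = \Cd \circ (-)^{\nat} \circ \sI = \sC \circ \sI = \sC\sI,
\]
so the statement is literally the same as saying $\sC\sI \cong 1_{\QSup}$.

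The second step is to invoke the 2-natural isomorphism exhibited in the first paragraph of the proof of Theorem \ref{C_I_adjoint}: for each separated complete $\CQ$-category $\bbX$ one has $\sC\sI\bbX = \sC(\bbX,c_\bbX) = \{\sy_\bbX x \mid x\in\bbX_0\}$, and ${\sup}_\bbX : \sC(\bbX,c_\bbX) \lra \bbX$ is an isomorphism in $\QCat$ (hence in $\QSup$), its inverse being the codomain restriction of $\sy_\bbX$. The 2-naturality of $\{{\sup}_\bbX\}_{\bbX \in \ob(\QSup)}$ was already verified there using Proposition \ref{la_sup_preserving}, which tells us that a left adjoint $f : \bbX \lra \bbY$ between complete $\CQ$-categories satisfies $f\,{\sup}_\bbX = {\sup}_\bbY f^{\ra}$, i.e., the naturality square with vertical arrows $\sC\sI f = (f^\nat)^\triangleright$ commutes.

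The only thing to verify, which is purely formal, is that this is indeed the correct reading for the composite $\Cd\Id$ on morphisms: for a $\sup$-preserving $\CQ$-functor $f : \bbX \lra \bbY$, one has $\Id f = (f^\nat : (\bbY,c_\bbY) \oto (\bbX,c_\bbX))$, and the commutativity of (\ref{C_cograph_Cd}) gives $\Cd(f^\nat) = \sC f = \ftra$, which is precisely the left adjoint appearing in the naturality squares already checked. There is no substantive obstacle here: once the equality $\Cd\Id = \sC\sI$ is recorded, the required 2-natural isomorphism has been constructed in the proof of Theorem \ref{C_I_adjoint} and may be quoted verbatim.
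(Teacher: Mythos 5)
Your proposal is correct and follows exactly the paper's own argument: the identity $\Cd\Id=\Cd\cdot(-)^{\nat}\cdot\sI=\sC\sI$ from the commutative diagram (\ref{C_cograph_Cd}), followed by the natural isomorphism $\sC\sI\cong 1_{\QSup}$ already established in the first paragraph of the proof of Theorem \ref{C_I_adjoint}. The extra check on morphisms that you include is a harmless elaboration of the same reasoning.
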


\begin{proof}
Just note that $\Cd\Id=\Cd\cdot(-)^{\nat}\cdot\sI=\sC\sI$.
\end{proof}

\subsection{Closed continuous $\CQ$-distributors}

A \emph{nucleus} \cite{Rosenthal1996} on a quantaloid $\CQ$ is a lax functor $\sj:\CQ\lra\CQ$ that is an identity on objects and a closure operator on each hom-set. In elementary words, a nucleus $\sj$ consists of a family of monotone maps on each $\CQ(p,q)$ $(p,q\in\CQ_0)$ such that $u\leq\sj u$, $\sj\sj u=\sj u$ and $\sj v\circ\sj u\leq\sj(v\circ u)$ for all $u\in\CQ(p,q)$, $v\in\CQ(q,r)$.

Each nucleus $\sj:\CQ\lra\CQ$ induces a \emph{quotient quantaloid} $\CQ_{\sj}$ whose objects are the same as those of $\CQ$; arrows in $\CQ_{\sj}$ are the fixed points of $\sj$, i.e., $u\in\CQ_\sj(p,q)$ if $\sj u=u$ for $u\in\CQ(p,q)$. The identity arrow in $\CQ_\sj(q,q)$ is $\sj(1_q)$; local joins $\bigsqcup$ and compositions $\circ_{\sj}$ in $\CQ_\sj$ are respectively given by
\begin{equation} \label{Qj_comp_join}
\bigsqcup_{i\in I} u_i=\sj\Big(\bv_{i\in I} u_i\Big)\quad\text{and}\quad v\circ_\sj u=\sj(v\circ u)
\end{equation}
for $\{u_i\}_{i\in I}\subseteq\CQ_\sj(p,q)$, $u\in\CQ_\sj(p,q)$, $v\in\CQ_\sj(q,r)$. In addition, $\sj:\CQ\lra\CQ_\sj$ is a full quantaloid homomorphism.

\begin{rem}
$\CQ_{\sj}$ may be viewed as the quotient of $\CQ$ modulo the congruence $\vartheta_{\sj}$ (i.e., a family of equivalence relations $(\vartheta_{\sj})_{p,q}$ on each hom-set $\CQ(p,q)$ that is compatible with compositions and joins of $\CQ$-arrows) given by
$$(u,u')\in(\vartheta_{\sj})_{p,q}\iff\sj u=\sj u'.$$
In fact, $\sj u$ is the largest $\CQ$-arrow in the equivalence class of each $\CQ$-arrow $u$, thus $\CQ_{\sj}$ contains exactly one representative (i.e., the largest one) from each equivalence class of the congruence $\vartheta_{\sj}$.
\end{rem}

Recall that each $\CQ$-distributor $\phi:\bbX\oto\bbY$ has a transpose $\tphi:\bbY\lra\PX$ (see Equation (\ref{tphi_def})), and one may verify the following lemma easily:

\begin{lem} \label{tphi_hphi_Yoneda} {\rm\cite{Shen2013a}}
For each $\CQ$-distributor $\phi:\bbX\oto\bbY$ and $y\in\bbY_0$,
$$\tphi y=\phi(-,y)=\phi^*\sy_{\bbY}y.$$
\end{lem}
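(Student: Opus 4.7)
The first equality $\tphi y = \phi(-,y)$ is nothing but the definition of the transpose given in Equation (\ref{tphi_def}), so no work is needed there. The entire content of the lemma lies in the second equality $\phi(-,y) = \phi^*\sy_{\bbY}y$, and my plan is to verify it by a direct one-line unfolding.

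I will start by evaluating the right-hand side at an arbitrary $x\in\bbX_0$. By the definition of the Kan adjunction recalled in Subsection \ref{Presheaves}, $\phi^*$ sends a presheaf $\lam\in\PY$ to $\lam\circ\phi$, so
\[
\phi^*\sy_{\bbY}y = (\sy_{\bbY}y)\circ\phi = \bbY(-,y)\circ\phi.
\]
Evaluating this $\CQ$-distributor at $x$ via the composition formula in $\QRel$ yields
\[
(\phi^*\sy_{\bbY}y)(x) = \bv_{y'\in\bbY_0}\bbY(y',y)\circ\phi(x,y').
\]

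The crucial step is to recognize that the right-hand side of the preceding display is exactly $(\bbY\circ\phi)(x,y)$, which by the distributor condition (\ref{distributor_def}) equals $\phi(x,y)$. Thus $(\phi^*\sy_{\bbY}y)(x)=\phi(x,y)=\phi(-,y)(x)$ for every $x\in\bbX_0$, giving the desired equality of presheaves. I do not anticipate any real obstacle here: the argument is purely a routine chase through the definitions of $\phi^*$, $\sy_{\bbY}$ and the distributor identity $\bbY\circ\phi=\phi$. The only conceptual point worth emphasizing in the write-up is that the Yoneda-style identity $\bbY\circ\phi=\phi$ is what makes the composition with the representable $\bbY(-,y)$ collapse to the single value $\phi(-,y)$.
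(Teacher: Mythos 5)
Your verification is correct and is exactly the routine unfolding the paper has in mind when it says the lemma is easily checked: $\phi^*\sy_{\bbY}y=\bbY(-,y)\circ\phi$ evaluated at $x$ gives $(\bbY\circ\phi)(x,y)$, which collapses to $\phi(x,y)$. The only cosmetic point is that (\ref{distributor_def}) literally reads $\bbY\circ\phi\circ\bbX=\phi$, so you should note that the one-sided absorption $\bbY\circ\phi=\phi$ follows from it via reflexivity, $\phi=\id_{\bbY_0}\circ\phi\leq\bbY\circ\phi\leq\bbY\circ\phi\circ\bbX=\phi$.
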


A continuous $\CQ$-distributor $\zeta:(\bbX,c)\oto(\bbY,d)$ is \emph{closed} if its transpose satisfies $\tzeta y\in\sC(\bbX,c)$ for all $y\in\bbY_0$. For a general $\zeta$, we define its \emph{closure} $\cl\zeta:\bbX\oto\bbY$ through its transpose as
$$\widetilde{\cl\zeta}:=(\bbY\to^{\tzeta}\PX\to^c\PX).$$

\begin{lem} \label{cpzeta=czeta}
Let $\zeta:(\bbX,c)\oto(\bbY,d)$ be a continuous $\CQ$-distributor. Then
\begin{itemize}
\item[\rm (1)] $c(\cl\zeta)^*=c\zeta^*$.
\item[\rm (2)] $\cl\zeta:(\bbX,c)\oto(\bbY,d)$ is a closed continuous $\CQ$-distributor.
\end{itemize}
\end{lem}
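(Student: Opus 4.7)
The plan is to prove part (1) first and then derive part (2) from it. For part (1), the key observation is that the codomain restriction $\oc:\PX\lra\sC(\bbX,c)$ is a left adjoint in $\QCat$ by Proposition \ref{closure_operator_adjoint_inclusion}, so by Proposition \ref{la_tensor_preserving} it preserves tensors, and as a left adjoint between underlying preorders it also preserves the underlying joins (which in $\sC(\bbX,c)$ are computed as $\bigsqcup_i\nu_i=c(\bv_i\nu_i)$ by Corollary \ref{closure_system_sup}). Combining this with the tensor formula $u\otimes\mu=u\circ\mu$ in $\PX$ from Example \ref{PX_tensor_complete} and the tensor formula $u\otimes\mu=c(u\circ\mu)$ in $\sC(\bbX,c)$ from Equation (\ref{tensor_cPX}), tensor preservation of $\oc$ gives the pointwise identity
$$c(u\circ\mu)=c(u\circ c\mu)$$
for all $\mu\in\PX$ and $u\in\sP\{|\mu|\}$.

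Now for any $\lam\in\PY$, expand both sides as pointwise joins in $\PX$:
$$\zeta^*\lam=\bv_{y\in\bbY_0}\lam(y)\circ(\tzeta y),\qquad(\cl\zeta)^*\lam=\bv_{y\in\bbY_0}\lam(y)\circ c(\tzeta y).$$
Applying $c$ and using the join-preservation of $\oc$ (i.e.\ $c(\bv_i\nu_i)=c(\bv_i c\nu_i)$ after folding back through $j$), the term-by-term identity above delivers $c(\cl\zeta)^*\lam=c\zeta^*\lam$, proving part (1).

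For part (2), I first check that $\cl\zeta$ is indeed a $\CQ$-distributor: its transpose $\widetilde{\cl\zeta}=c\circ\tzeta:\bbY\lra\PX$ is a composition of $\CQ$-functors, and the bijection $\QDist(\bbX,\bbY)\cong\QCat(\bbY,\PX)$ from Equations (\ref{tphi_def})--(\ref{tf_def}) then supplies a unique $\CQ$-distributor with that transpose, namely $\cl\zeta$ itself. Closedness is immediate by construction, since $\widetilde{\cl\zeta}(y)=c(\tzeta y)\in\sC(\bbX,c)$ for every $y\in\bbY_0$. Continuity is cleanest via the equivalent form Proposition \ref{continuous_dist_condition}(ii): combining part (1), continuity of $\zeta$, and idempotency of $c$,
$$c(\cl\zeta)^*d=c\zeta^*d\leq cc\zeta^*=c\zeta^*=c(\cl\zeta)^*.$$

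The main obstacle is the pointwise identity $c(u\circ\mu)=c(u\circ c\mu)$ in part (1). It is not a formal consequence of $c$ being a closure operator; rather, it is precisely what the tensor-preservation of the left adjoint $\oc$ says, once one unpacks the different formulas for tensors in $\PX$ and in the reflective $\CQ$-subcategory $\sC(\bbX,c)$. Once this identity is in hand, everything else is routine bookkeeping with the characterizations already recorded in Proposition \ref{continuous_dist_condition}.
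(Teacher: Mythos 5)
Your proof is correct and follows essentially the same route as the paper: both arguments rest on the left-adjointness of $\oc:\PX\lra\sC(\bbX,c)$, the tensor formulas $u\otimes\mu=u\circ\mu$ in $\PX$ and $u\otimes\mu=c(u\circ\mu)$ in $\sC(\bbX,c)$, and the pointwise-join expansion of $\lam\circ\zeta$ and $\lam\circ\cl\zeta$; your part (2) is the same one-line computation as the paper's. The only difference is organizational --- you extract the scalar identity $c(u\circ\mu)=c(u\circ c\mu)$ up front and then do the join bookkeeping, whereas the paper runs the whole thing as a single chain of equalities --- which is a matter of presentation, not substance.
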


\begin{proof}
(1) Since $\oc:\PX\lra\sC(\bbX,c)$ is a left adjoint in $\QCat$, similar to Step 2 in the proof of Proposition \ref{Cd_functor} one has
\begin{align*}
c(\cl\zeta)^*\lam&=\oc(\lam\circ\cl\zeta)\\
&=\oc\Big(\bv_{y\in\bbY_0}\lam(y)\circ(\widetilde{\cl\zeta}y)\Big)\\
&=\oc\Big(\bv_{y\in\bbY_0}\lam(y)\circ(\oc\tzeta y)\Big)\\
&=\bigsqcup_{y\in\bbY_0}\oc(\lam(y)\circ(\oc\tzeta y))&(\text{Proposition \ref{la_tensor_preserving}})\\
&=\bigsqcup_{y\in\bbY_0}\lam(y)\otimes(\oc\tzeta y)&(\text{Equation (\ref{tensor_cPX})})\\
&=\oc\Big(\bv_{y\in\bbY_0}\lam(y)\circ(\tzeta y)\Big)&(\text{Proposition \ref{la_tensor_preserving}})\\
&=\oc(\lam\circ\zeta)\\
&=c\zeta^*\lam.
\end{align*}
for all $\lam\in\PY$, where $\bv$ and $\bigsqcup$ respectively denote the underlying joins in $\PX$ and $\sC(\bbX,c)$, and $\otimes$ denotes the tensor in $\sC(\bbX,c)$.

(2) Proposition \ref{continuous_dist_condition}(ii) together with (1) ensure that
$$c(\cl\zeta)^*d=c\zeta^*d=c\zeta^*=c(\cl\zeta)^*,$$
and hence, $\cl\zeta:(\bbX,c)\oto(\bbY,d)$ is a continuous $\CQ$-distributor which is obviously closed.
\end{proof}

\begin{prop} \label{p_nucleus}
$\cl$ is a nucleus on the quantaloid $\QClsDist$.
\end{prop}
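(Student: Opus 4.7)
The plan is to verify the three defining properties of a nucleus on $\QClsDist$: (a) $\cl$ is the identity on objects, (b) $\cl$ restricts to a closure operator on each hom-set, and (c) $\cl$ is lax with respect to composition, i.e.\ $\cl\eta\circ\cl\zeta\leq\cl(\eta\circ\zeta)$ for composable continuous $\CQ$-distributors. Point (a) is immediate because $\cl\zeta$ is defined via its transpose with the same domain and codomain as $\zeta$, and Lemma \ref{cpzeta=czeta}(2) guarantees that $\cl\zeta$ remains a (closed) continuous $\CQ$-distributor, so $\cl$ maps $\QClsDist((\bbX,c),(\bbY,d))$ into itself.

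For (b), I would work at the level of transposes. Extensivity $\zeta\leq\cl\zeta$ follows from $1_{\PX}\leq c$: pointwise $\widetilde{\cl\zeta}\,y=c\,\tzeta y\geq\tzeta y$, and this pointwise inequality in $\PX$ translates directly into $\zeta\leq\cl\zeta$ in $\QDist(\bbX,\bbY)$. Idempotence $\cl\cl\zeta=\cl\zeta$ follows from $cc=c$ applied to the transpose. Monotonicity is automatic from the monotonicity (indeed $\CQ$-functoriality) of $c$ on $\PX$. Together these make $\cl$ a closure operator on each hom-set, and in particular $\QClsDist((\bbX,c),(\bbY,d))_{\cl}$ consists of the closed continuous $\CQ$-distributors.

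The interesting part is (c). Fix $\zeta:(\bbX,c)\oto(\bbY,d)$ and $\eta:(\bbY,d)\oto(\bbZ,e)$ and a point $z\in\bbZ_0$. Using Lemma \ref{tphi_hphi_Yoneda} and the 2-functoriality of $(-)^{*}$, I compute
$$\widetilde{\cl\eta\circ\cl\zeta}\,z=(\cl\eta\circ\cl\zeta)^{*}\sy_{\bbZ}z=(\cl\zeta)^{*}(\cl\eta)^{*}\sy_{\bbZ}z=(\cl\zeta)^{*}(\widetilde{\cl\eta}\,z)=(\cl\zeta)^{*}(d\,\teta z),$$
while $\widetilde{\cl(\eta\circ\zeta)}\,z=c\,\zeta^{*}\teta z$. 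The target inequality thus reduces to the single pointwise bound
$$(\cl\zeta)^{*}(d\,\teta z)\leq c\,\zeta^{*}(\teta z)\quad\text{in }\PX.$$
I would establish this in three moves: use $1_{\PX}\leq c$ to pass from $(\cl\zeta)^{*}$ to $c(\cl\zeta)^{*}$; invoke Lemma \ref{cpzeta=czeta}(1) to rewrite $c(\cl\zeta)^{*}=c\zeta^{*}$; and finally apply the continuity of $\zeta$, namely $\zeta^{*}d\leq c\zeta^{*}$, together with $cc=c$, to absorb $d$ into a single $c$ on the outside. This chain gives the required inequality, and since it holds pointwise for every $z\in\bbZ_0$ it lifts to $\cl\eta\circ\cl\zeta\leq\cl(\eta\circ\zeta)$ in $\QClsDist$.

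The main obstacle I expect is exactly step (c): one has to notice that the $d$ appearing after transposing $\cl\eta$ can be swallowed by $c\zeta^{*}$ on the other side of $\zeta^{*}$, which is precisely what continuity of $\zeta$ says, and that the extra $c$ introduced in front of $(\cl\zeta)^{*}$ is free because of Lemma \ref{cpzeta=czeta}(1). Once this is seen, the verification is a short chain of inequalities and all other parts of the nucleus axioms are immediate.
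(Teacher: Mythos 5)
Your proposal is correct and follows essentially the same route as the paper: the hom-set closure axioms are immediate from $1_{\PX}\leq c$ and $cc=c$, and the lax-composition inequality is obtained by passing to transposes, inserting a free $c$ in front, replacing $c(\cl\zeta)^*$ by $c\zeta^*$ via Lemma \ref{cpzeta=czeta}(1), and absorbing the $d$ using the continuity of $\zeta$. The only (cosmetic) difference is that you argue pointwise at each $z\in\bbZ_0$ and read off $\widetilde{\cl\eta}\,z=d\,\teta z$ directly from the definition, where the paper works with the whole composite $\CQ$-functor and cites Proposition \ref{continuous_dist_condition}(ii).
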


\begin{proof}
First, it is easy to check that $\cl$ is monotone with respect to the local order of continuous $\CQ$-distributors and $\zeta\leq\cl\zeta$, $\cl\cdot\cl\zeta=\cl\zeta$.

Second, in order to prove
$$\cl\eta\circ\cl\zeta\leq\cl(\eta\circ\zeta)$$
for all continuous $\CQ$-distributors $\zeta:(\bbX,c)\oto(\bbY,d)$, $\eta:(\bbY,d)\oto(\bbZ,e)$, note that
\begin{align*}
\widetilde{\cl\eta\circ\cl\zeta}&\leq c\cdot\widetilde{\cl\eta\circ\cl\zeta}\\
&=c(\cl\eta\circ\cl\zeta)^*\sy_{\bbZ}&(\text{Lemma \ref{tphi_hphi_Yoneda}})\\
&=c(\cl\zeta)^*(\cl\eta)^*\sy_{\bbZ}\\
&=c(\cl\zeta)^*d(\cl\eta)^*\sy_{\bbZ}&(\text{Proposition \ref{continuous_dist_condition}(ii)})\\
&=c\zeta^*d\eta^*\sy_{\bbZ}&(\text{Lemma \ref{cpzeta=czeta}(1)})\\
&=c\zeta^*\eta^*\sy_{\bbZ}&(\text{Proposition \ref{continuous_dist_condition}(ii)})\\
&=c(\eta\circ\zeta)^*\sy_{\bbZ}\\
&=c\cdot\widetilde{\eta\circ\zeta}&(\text{Lemma \ref{tphi_hphi_Yoneda}})\\
&=\widetilde{\cl(\eta\circ\zeta)},
\end{align*}
and the conclusion thus follows.
\end{proof}

The nucleus $\cl$ gives rise to a quotient quantaloid of $\QClsDist$, i.e.,
$$\QClsCloDist.$$
We remind the readers that local joins and compositions in the quantaloid $\QClsCloDist$ of $\CQ$-closure spaces and closed continuous $\CQ$-distributors are given by the formulas in (\ref{Qj_comp_join}), which are in general different from those in $\QClsDist$.

The universal property of the quotient quantaloid $\QClsCloDist$ along with the following Lemma \ref{Czeta=Ceta} ensures that $\Cd$ factors uniquely through the quotient homomorphism $\cl$ via a quantaloid homomorphism $\Ccl$:
$$\bfig
\qtriangle/->`->`-->/<1800,500>[(\QClsDist)^{\op} `\QClsCloDist^{\op}`\QSup;\cl^{\op}`\Cd`\Ccl]
\efig$$

\begin{lem} \label{Czeta=Ceta}
For continuous $\CQ$-distributors $\zeta,\eta:(\bbX,c)\oto(\bbY,d)$, $\cl\zeta=\cl\eta$ if, and only if, $\Cd\zeta=\Cd\eta$.
\end{lem}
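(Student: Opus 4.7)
The proposal is to translate both statements into equalities of the composites $c\zeta^{*}$ and $c\eta^{*}$, and then to use continuity together with the Yoneda lemma to reduce the question of closure-equality to the restriction to $\sC(\bbY,d)$.

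For the forward direction, I would invoke Lemma \ref{cpzeta=czeta}(1): it says $c(\cl\zeta)^{*}=c\zeta^{*}$, and of course the same identity holds for $\eta$. Hence if $\cl\zeta=\cl\eta$ then $c\zeta^{*}=c(\cl\zeta)^{*}=c(\cl\eta)^{*}=c\eta^{*}$ as $\CQ$-functors $\PY\lra\PX$. Restricting along the inclusion $\sC(\bbY,d)\ \to/^(->/\PY$ (and remembering that $\oc$ is merely the codomain restriction of $c$) gives $\zetra=\oc\zeta^{*}|_{\sC(\bbY,d)}=\oc\eta^{*}|_{\sC(\bbY,d)}=\etra$, i.e.\ $\Cd\zeta=\Cd\eta$.

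For the reverse direction, the assumption $\Cd\zeta=\Cd\eta$ reads: $c\zeta^{*}\lam=c\eta^{*}\lam$ for every $\lam\in\sC(\bbY,d)$. I want to deduce $\widetilde{\cl\zeta}=\widetilde{\cl\eta}$, which by definition of the closure and by Lemma \ref{tphi_hphi_Yoneda} amounts to showing
$$c\zeta^{*}\sy_{\bbY}y=c\eta^{*}\sy_{\bbY}y\quad\text{for all }y\in\bbY_{0}.$$
The obstacle is that $\sy_{\bbY}y$ need not lie in $\sC(\bbY,d)$, so the hypothesis does not apply directly. The fix is Proposition \ref{continuous_dist_condition}(ii) (continuity): it yields $c\zeta^{*}=c\zeta^{*}d$ and likewise $c\eta^{*}=c\eta^{*}d$. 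Applying these to $\sy_{\bbY}y$ produces $d\sy_{\bbY}y\in\sC(\bbY,d)$, where the hypothesis does apply, giving
$$c\zeta^{*}\sy_{\bbY}y=c\zeta^{*}d\sy_{\bbY}y=c\eta^{*}d\sy_{\bbY}y=c\eta^{*}\sy_{\bbY}y,$$
which finishes the proof.

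The main (mild) obstacle is exactly the point just noted: the hypothesis $\Cd\zeta=\Cd\eta$ constrains $c\zeta^{*}$ and $c\eta^{*}$ only on closed presheaves, whereas the conclusion requires agreement on the representables $\sy_{\bbY}y$, which in general fail to be closed. Continuity of $\zeta$ and $\eta$ is what bridges this gap, and it is the only nontrivial ingredient; the rest is routine unwinding of the definitions of $\Cd$, $\cl$ and the transpose.
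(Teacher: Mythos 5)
Your proof is correct and follows essentially the same route as the paper: the forward direction via Lemma \ref{cpzeta=czeta}(1) and restriction to $\sC(\bbY,d)$, and the reverse direction by writing $\widetilde{\cl\zeta}=c\zeta^*\sy_{\bbY}$ (Lemma \ref{tphi_hphi_Yoneda}) and inserting $d$ via Proposition \ref{continuous_dist_condition}(ii) so that the hypothesis applies to the closed presheaves $d\sy_{\bbY}y$. The ``obstacle'' you flag --- that representables need not be closed --- is exactly the point the paper's proof handles by the same $c\zeta^*=c\zeta^*d$ trick.
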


\begin{proof}
The necessity is easy, since from Lemma \ref{cpzeta=czeta}(1) one soon has
$$\oc\zeta^*=\oc(\cl\zeta)^*=\oc(\cl\eta)^*=\oc\eta^*:\PY\lra\sC(\bbX,c),$$
and thus
$$\Cd\zeta=\zetra=\etra=\Cd\eta: \sC(\bbY,d)\ \to/^(->/\PY\to^{\oc\zeta^*=\oc\eta^*}\sC(\bbX,c).$$

For the sufficiency, if $\zetra=\etra$, it follows immediately that
$$\oc\zeta^*d=\zetra\od=\etra\od=\oc\eta^*d:\PY\lra\sC(\bbX,c),$$
and consequently
$$\widetilde{\cl\zeta}=c\tzeta=c\zeta^*\sy_{\bbY}=c\zeta^*d\sy_{\bbY}=c\eta^*d\sy_{\bbY}=c\eta^*\sy_{\bbY}=c\teta=\widetilde{\cl\eta},$$
where Lemma \ref{tphi_hphi_Yoneda} implies the second and the sixth equalities, and Proposition \ref{continuous_dist_condition}(ii) guarantees the third and the fifth equalities. Therefore $\cl\zeta=\cl\eta$.
\end{proof}

Let $\Icl$ be the composite 2-functor
$$\Icl:=(\QSup\to^{\Id}(\QClsDist)^{\op}\to^{\cl^{\op}}\QClsCloDist^{\op}).$$
Then $\Ccl\Icl=\Ccl\cdot\cl^{\op}\cdot\Id=\Cd\Id$, and together with Proposition \ref{Cd_Id_id} one has:

\begin{prop} \label{Ccl_Icl_id}
$\Ccl\Icl$ is naturally isomorphic to the identity 2-functor on $\QSup$.
\end{prop}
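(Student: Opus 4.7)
The plan is to recognize this as an essentially immediate corollary obtained by composing facts already in hand; no new computation is required. First I would unfold the definition $\Icl=\cl^{\op}\circ\Id$ to rewrite $\Ccl\Icl=\Ccl\cdot\cl^{\op}\cdot\Id$. Next I would invoke the factorization property of the quotient homomorphism $\cl^{\op}$ that was displayed in the triangle just above Lemma~\ref{Czeta=Ceta}, namely $\Ccl\cdot\cl^{\op}=\Cd$; this reduces the composite to $\Cd\Id$. Finally, Proposition~\ref{Cd_Id_id} gives that $\Cd\Id$ is naturally isomorphic to the identity 2-functor on $\QSup$, which concludes the argument.

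For concreteness I would spell out the natural isomorphism at each object: on a separated complete $\CQ$-category $\bbX$ the component is $\sup_{\bbX}:\sC(\bbX,c_{\bbX})\lra\bbX$, whose inverse is the codomain restriction of the Yoneda embedding $\sy_{\bbX}:\bbX\lra\sC(\bbX,c_{\bbX})$, as already identified in the proof of Theorem~\ref{C_I_adjoint}. The 2-naturality with respect to $\sup$-preserving $\CQ$-functors was verified there using Proposition~\ref{la_sup_preserving}, and it transfers verbatim since $\Ccl\Icl\bbX=\Cd\Id\bbX=\sC\sI\bbX$ on objects and the action of $\Ccl\Icl$ on a morphism $f:\bbX\lra\bbY$ in $\QSup$ agrees with that of $\Cd\Id$ (being obtained by first forming the cograph $f^{\nat}$, regarding it as a closed continuous $\CQ$-distributor, and then applying $\Cd$).

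There is no real obstacle here, since all heavy lifting was carried out previously: Lemma~\ref{Czeta=Ceta} made the factorization $\Ccl\cdot\cl^{\op}=\Cd$ legitimate by identifying the congruence $\vartheta_{\cl}$ with the kernel of $\Cd$, and the analysis of $\sC\sI$ in Theorem~\ref{C_I_adjoint} supplied the explicit natural isomorphism. The only small point worth checking en route is that no information is lost in passing from $\Id$ to $\Icl=\cl^{\op}\Id$, which is automatic because for a $\sup$-preserving $f:\bbX\lra\bbY$ the cograph $(\sI f)^{\nat}=f^{\nat}:(\bbY,c_{\bbY})\oto(\bbX,c_{\bbX})$ is already closed (its transpose $\sy_{\bbY}f$ lands in $\sC(\bbY,c_{\bbY})=\{\sy_{\bbY}y\mid y\in\bbY_0\}$), so $\cl^{\op}$ acts as the identity on the image of $\Id$.
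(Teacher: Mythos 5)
Your argument is exactly the paper's: it defines $\Icl=\cl^{\op}\Id$ and observes $\Ccl\Icl=\Ccl\cdot\cl^{\op}\cdot\Id=\Cd\Id$, then invokes Proposition \ref{Cd_Id_id}. Your extra remarks (the explicit components $\sup_{\bbX}$ from the proof of Theorem \ref{C_I_adjoint}, and the observation that $f^{\nat}$ is already closed so $\cl^{\op}$ fixes the image of $\Id$) are correct and match material the paper records elsewhere, so the proposal is fine.
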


Note that Proposition \ref{Cd_functor} and Lemma \ref{Czeta=Ceta} guarantee that $\Ccl$ is fully faithful, while Proposition \ref{Ccl_Icl_id} in particular implies that $\Ccl$ is essentially surjective. Therefore, we arrive at the main result of this paper:

\begin{thm} \label{Ccl_Icl_equivalence}
$\Ccl:\QClsCloDist^{\op}\lra\QSup$ and $\Icl:\QSup\lra\QClsCloDist^{\op}$ establish an equivalence of quantaloids; hence, $\QClsCloDist$ and $\QSup$ are dually equivalent quantaloids.
\end{thm}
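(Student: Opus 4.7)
The plan is to invoke the standard characterization: a quantaloid homomorphism is an equivalence of quantaloids exactly when it is fully faithful and essentially surjective on objects, and in that case any essentially inverse assignment automatically assembles into a pseudo-inverse homomorphism. All the hard technical work has already been carried out in the preceding propositions, so the theorem is essentially an assembly of them.

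First I would record that $\Ccl$ is a quantaloid homomorphism. Indeed, the defining factorization $\Cd = \Ccl\circ\cl^{\op}$ combined with the fact that $\cl^{\op}$ is a surjective quantaloid homomorphism (Proposition \ref{p_nucleus}) and that $\Cd$ preserves local joins (Proposition \ref{Cd_functor}) forces $\Ccl$ to preserve local joins on every hom-set of $\QClsCloDist^{\op}$. Moreover, since $\cl^{\op}$ is identity-on-objects, the essential image of $\Ccl$ coincides with that of $\Cd$.

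Next I would verify essential surjectivity. Since $\Ccl\Icl = \Cd\Id$, Proposition \ref{Ccl_Icl_id} already supplies a natural isomorphism $\Ccl\Icl \cong 1_{\QSup}$; hence every separated complete $\CQ$-category $\bbX$ is, up to $\sup$-preserving isomorphism, the value $\Ccl(\Icl\bbX)$. For fullness, given a $\sup$-preserving $\CQ$-functor $f:\sC(\bbY,d)\lra\sC(\bbX,c)$, Step~2 in the proof of Proposition \ref{Cd_functor} constructs a continuous $\CQ$-distributor $\zeta:(\bbX,c)\oto(\bbY,d)$ with $\Cd\zeta = f$; its closure $\cl\zeta$ is then a morphism in $\QClsCloDist^{\op}$ with $\Ccl(\cl\zeta) = f$. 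For faithfulness, suppose two closed continuous $\CQ$-distributors $\zeta_1,\zeta_2:(\bbX,c)\oto(\bbY,d)$ satisfy $\Ccl\zeta_1=\Ccl\zeta_2$; then $\Cd\zeta_1=\Cd\zeta_2$, Lemma \ref{Czeta=Ceta} yields $\cl\zeta_1=\cl\zeta_2$, and since both $\zeta_i$ are already closed this is $\zeta_1=\zeta_2$ as morphisms of $\QClsCloDist^{\op}$.

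Assembling these three points produces the equivalence $\Ccl:\QClsCloDist^{\op}\simeq\QSup:\Icl$, from which the dual equivalence between $\QClsCloDist$ and $\QSup$ follows immediately by taking opposites. I do not expect any individual step to be a genuine obstacle, since the substantive content has been isolated in Proposition \ref{Cd_functor} and Lemma \ref{Czeta=Ceta}. The only subtlety worth flagging is the bookkeeping around the nucleus $\cl$: morphisms of $\QClsCloDist$ are $\cl$-fixed representatives of $\cl$-equivalence classes in $\QClsDist$, so one must consistently translate between "same image under $\Cd$" and "same equivalence class under $\cl$" when verifying faithfulness — and this is exactly the content of Lemma \ref{Czeta=Ceta}.
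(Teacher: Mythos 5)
Your proposal is correct and follows essentially the same route as the paper: $\Ccl$ is fully faithful by Proposition \ref{Cd_functor} together with Lemma \ref{Czeta=Ceta}, essentially surjective by Proposition \ref{Ccl_Icl_id}, and $\Icl$ is then a quasi-inverse. The only detail you wave at ("automatically assembles into a pseudo-inverse homomorphism") that the paper spells out is the one-line check that $\Icl$, being fully faithful and locally order-preserving, preserves local joins and is hence itself a quantaloid homomorphism.
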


\begin{proof}
It remains to verify the claim about $\Icl$. First, since $\Ccl$ is an equivalence of categories, there exists a functor $\sF:\QSup\lra\QClsCloDist^{\op}$ such that $\sF\Ccl$ is naturally isomorphic to the identity functor on $\QClsCloDist^{\op}$, thus so is $\Icl\Ccl$ as one has natural isomorphisms
$$\Icl\Ccl\cong\sF\Ccl\Icl\Ccl\cong\sF\Ccl,$$
showing that $\Icl$ is also an equivalence of categories. Second, $\Icl$ is a quantaloid homomorphism since it is fully faithful and clearly preserves the order of hom-sets, and consequently preserves joins of left-adjoint $\CQ$-functors.
\end{proof}

\begin{rem}
In fact, for any left adjoint $\CQ$-functor $f:\bbX\lra\bbY$ between complete $\CQ$-categories, $\Id f=f^{\nat}:(\bbY,c_{\bbY})\oto(\bbX,c_{\bbX})$ is a closed continuous $\CQ$-distributor, since
$$\widetilde{f^{\nat}}x=f^{\nat}(-,x)=\sy_{\bbY}(fx)\in\sC(\bbY,c_{\bbY})$$
for all $x\in\bbX_0$. That is, $\Id f=\Icl f$.
\end{rem}

It is already known that $\QSup$ is monadic over $\Set\da\CQ_0$ (see \cite[Theorem 3.8]{Pu2015}), thus $\QSup$ is complete since so is $\Set\da\CQ_0$ (see \cite[Corollary II.3.3.2]{Hofmann2014}). Moreover, the 2-isomorphism (\ref{Qop_Cat_iso}) in Remark \ref{QCat_dual} induces an isomorphism of quantaloids
$$(\QSup)^{\op}\cong\CQ^{\op}\text{-}\Sup,$$
which assigns to a left adjoint $\CQ$-functor $f:\bbX\lra\bbY$ the dual $g^{\op}:\bbY^{\op}\lra\bbX^{\op}$ of its right adjoint. Hence, the completeness of $\CQ^{\op}\text{-}\Sup$ guarantees the cocompleteness of $\QSup$, and in combination with Theorem \ref{Ccl_Icl_equivalence} one concludes:

\begin{cor} \label{QCatClsCloDist_complete}
$\QClsCloDist$ is cocomplete and complete.
\end{cor}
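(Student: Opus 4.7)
The plan is to deduce Corollary \ref{QCatClsCloDist_complete} as a direct consequence of Theorem \ref{Ccl_Icl_equivalence} together with the completeness and cocompleteness of $\QSup$, both of which are indicated in the paragraph immediately preceding the corollary.

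First, I would establish that $\QSup$ is complete. This follows from the cited monadicity of $\QSup$ over $\Set\da\CQ_0$ (see \cite[Theorem 3.8]{Pu2015}), since monadic functors create limits and $\Set\da\CQ_0$ is complete (being a slice of $\Set$). Second, I would establish that $\QSup$ is cocomplete, not by direct construction of coequalizers and large coproducts in $\QSup$, but by exploiting the quantaloid isomorphism $(\QSup)^{\op} \cong \CQ^{\op}\text{-}\Sup$ induced by the 2-isomorphism (\ref{Qop_Cat_iso}). Since $\CQ^{\op}$ is again a small quantaloid, the preceding argument applies verbatim with $\CQ$ replaced by $\CQ^{\op}$ to yield the completeness of $\CQ^{\op}\text{-}\Sup$; under the isomorphism this translates into cocompleteness of $\QSup$.

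Finally, I would transport both properties across the equivalence of Theorem \ref{Ccl_Icl_equivalence}. Since $\Ccl \colon \QClsCloDist^{\op} \to \QSup$ is an equivalence of (ordinary) categories, it reflects and preserves all small limits and colimits, so $\QClsCloDist^{\op}$ is both complete and cocomplete; dualizing once more yields that $\QClsCloDist$ itself is both cocomplete and complete.

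There is essentially no serious obstacle here: the only mild subtlety is being careful with the two dualizations (once via $(-)^{\op}$ at the level of the quantaloid $\CQ$, and once via $(-)^{\op}$ at the level of the equivalence $\QClsCloDist^{\op} \simeq \QSup$), so that a completeness statement about $\QSup$ becomes a cocompleteness statement about $\QClsCloDist$ and vice versa. Once the bookkeeping of these two opposites is done, the corollary follows immediately.
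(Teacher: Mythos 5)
Your proposal is correct and follows the paper's own argument essentially verbatim: completeness of $\QSup$ via monadicity over $\Set\da\CQ_0$, cocompleteness via the isomorphism $(\QSup)^{\op}\cong\CQ^{\op}\text{-}\Sup$ applied to the small quantaloid $\CQ^{\op}$, and transport across the equivalence of Theorem \ref{Ccl_Icl_equivalence}. The bookkeeping of the two opposites is handled correctly, so there is nothing to add.
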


\subsection{Continuous $\CQ$-relations}

Let
$$\QClsRel\quad(\text{resp.}\ \QClsCloRel)$$
denote the full subquantaloid of $\QClsDist$ (resp. $\QClsCloDist$) whose objects are $\CQ$-closure spaces with discrete underlying $\CQ$-categories. Morphisms in $\QClsRel$ (resp. $\QClsCloRel$) will be called \emph{continuous $\CQ$-relations} (resp. \emph{closed continuous $\CQ$-relations}).

The following conclusion follows soon from Proposition \ref{Ud_topological}, where one only needs to replace all $\CQ$-categories in its proof with discrete ones:

\begin{prop}
The forgetful functor $\QClsRel\lra\QRel$ is topological. In particular, the category $\ClsRel$ of closure spaces and continuous relations is topological over the category $\Rel$ of sets and relations.
\end{prop}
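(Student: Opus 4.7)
The statement is a direct corollary of Proposition \ref{Ud_topological} obtained by restricting the construction there to discrete $\CQ$-categories, so I would reproduce that proof nearly verbatim with $\CQ_0$-typed sets in place of general $\CQ$-categories. The forgetful functor $\QClsRel\lra\QRel$ is the restriction of $\sU:\QClsDist\lra\QDist$ along the inclusions of the respective full subquantaloids on discrete objects; since both $\QClsRel$ and $\QRel$ are, by definition, such full subquantaloids, faithfulness is inherited from $\sU$ and only the existence of $\sU$-final liftings inside $\QClsRel$ needs checking.

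\textbf{Construction.} Given a (possibly large) $\sU$-structured sink $(\zeta_i:X_i\oto Y)_{i\in I}$, where each $(X_i,c_i)$ is a $\CQ$-closure space on the $\CQ_0$-typed set $X_i$ and $Y$ is a $\CQ_0$-typed set, I would define
$$d:=\bw_{i\in I}(\zeta_i)_*\, c_i\, \zeta_i^*:\sP Y\lra\sP Y.$$
As in the proof of Proposition \ref{Ud_topological}, $d$ is a $\CQ$-closure operator on $\sP Y$ (the pointwise meet of closure operators obtained by conjugating closure operators along Kan adjunctions), so $(Y,d)$ is a legitimate object of $\QClsRel$, and by construction each $\zeta_i:(X_i,c_i)\oto(Y,d)$ is continuous because $d\leq(\zeta_i)_*c_i\zeta_i^*$ forces $\zeta_i^*d\leq \zeta_i^*(\zeta_i)_*c_i\zeta_i^*\leq c_i\zeta_i^*$ via the counit of $\zeta_i^*\dv(\zeta_i)_*$.

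\textbf{Universal property.} For any $\CQ$-closure space $(Z,e)$ on a $\CQ_0$-typed set $Z$ and any $\CQ$-relation $\eta:Y\oto Z$ such that every composite $\eta\circ\zeta_i:(X_i,c_i)\oto(Z,e)$ is continuous, i.e.\ $\zeta_i^*\eta^*e\leq c_i\zeta_i^*\eta^*$ for each $i$, I would apply $(\zeta_i)_*$ and use the unit $1\leq(\zeta_i)_*\zeta_i^*$ to deduce $\eta^*e\leq(\zeta_i)_*c_i\zeta_i^*\eta^*$; taking the meet over $i$ yields $\eta^*e\leq d\eta^*$, so $\eta:(Y,d)\oto(Z,e)$ is continuous. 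This gives the required $\sU$-final lifting. The ``in particular'' clause is the case $\CQ={\bf 2}$, since then $\QClsRel$ and $\QRel$ reduce respectively to $\ClsRel$ and $\Rel$.

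\textbf{Obstacle.} There is essentially no difficulty: every ingredient of the proof of Proposition \ref{Ud_topological}, i.e.\ the formula for $d$, the Kan adjunction $\zeta^*\dv\zeta_*$, and the verification of the universal property, is insensitive to whether the underlying $\CQ$-categories are discrete, so the only matter to check is that the constructed $(Y,d)$ and the lifted morphism $\eta$ still live in the discrete world, which is immediate from the fact that $Y$ and $Z$ are already discrete by hypothesis.
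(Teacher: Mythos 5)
Your proposal is correct and follows exactly the route the paper takes: the paper's own proof consists of the single remark that one repeats the argument of Proposition \ref{Ud_topological} with all $\CQ$-categories replaced by discrete ones, which is precisely what you do (and you additionally spell out the verifications that $d$ is a $\CQ$-closure operator and that the final lifting works, details the paper leaves to the reader).
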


As a full subquantaloid of $\QClsCloDist$, $\QClsCloRel$ is also dually equivalent to $\QSup$:

\begin{thm} \label{QClsCloRel_QSup_equivalent}
$\QClsCloDist$ is equivalent to its full subquantaloid $\QClsCloRel$. Thus one has equivalences of quantaloids
$$\QClsCloRel^{\op}\simeq\QClsCloDist^{\op}\simeq\QSup.$$
In particular, $\QClsCloRel$ is cocomplete and complete.
\end{thm}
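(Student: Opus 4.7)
The plan is to show that the full subquantaloid inclusion $\sJ:\QClsCloRel\ \to/^(->/\QClsCloDist$ is essentially surjective; once this is established, everything else follows quickly. Since $\sJ$ is the inclusion of a full subquantaloid, it is automatically fully faithful and preserves the quantaloid structure (compositions and joins in $\QClsCloRel$ are inherited from $\QClsCloDist$, as the nucleus $\cl$ restricts properly to relations with discrete domain and codomain). Hence essential surjectivity will promote $\sJ$ to an equivalence of quantaloids.

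The key move is to pair every $\CQ$-closure space $(\bbX,c)$ with its ``discretization'' $(\bbX_0,c_0)\in\ob(\QClsCloRel)$. By Proposition \ref{cPX=c0PX0} one has $\sC(\bbX,c)=\sC(\bbX_0,c_0)$, and by the commutativity of diagram (\ref{C_cograph_Cd}) together with the factorization through $\cl^{\op}$ this gives $\Ccl(\bbX,c)=\Ccl(\bbX_0,c_0)$ in $\QSup$. Since $\Ccl$ is an equivalence by Theorem \ref{Ccl_Icl_equivalence}, and equivalences reflect isomorphisms between objects with isomorphic images, we conclude $(\bbX,c)\cong(\bbX_0,c_0)$ in $\QClsCloDist^{\op}$ (equivalently, in $\QClsCloDist$). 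This supplies the required essentially-surjective lifting and yields the equivalence $\QClsCloRel\simeq\QClsCloDist$. Composing with Theorem \ref{Ccl_Icl_equivalence} then gives the stated chain $\QClsCloRel^{\op}\simeq\QClsCloDist^{\op}\simeq\QSup$.

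For the last assertion, I would invoke Corollary \ref{QCatClsCloDist_complete}, which states that $\QClsCloDist$ is cocomplete and complete; since equivalences of categories preserve and reflect all small (co)limits, $\QClsCloRel$ inherits both cocompleteness and completeness.

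I expect the main subtlety to be the care needed in the reflection-of-isomorphisms step: one should observe that although the proof does not exhibit an explicit isomorphism $(\bbX,c)\cong(\bbX_0,c_0)$ in $\QClsCloDist$, the abstract argument is valid because $\Ccl$, being an equivalence, is fully faithful, and a fully faithful functor lifts the inverse pair of isomorphisms in $\QSup$ witnessing $\Ccl(\bbX,c)\cong\Ccl(\bbX_0,c_0)$ uniquely back to an inverse pair of closed continuous $\CQ$-distributors between $(\bbX,c)$ and $(\bbX_0,c_0)$. If a more concrete description were wanted, natural candidates for the witnessing morphisms are the distributors built from the hom-$\CQ$-relation $\bbX:\bbX\oto\bbX_0$ and $\bbX:\bbX_0\oto\bbX$ (viewing $\bbX_0$ as a discrete $\CQ$-category), which one can verify are mutually inverse after passing through the nucleus $\cl$, but the abstract argument through $\Ccl$ makes this verification unnecessary for the theorem.
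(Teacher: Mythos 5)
Your proposal is correct and follows essentially the same route as the paper: both reduce the claim to showing $(\bbX,c)\cong(\bbX_0,c_0)$ in $\QClsCloDist$, both use Proposition \ref{cPX=c0PX0} to get $\Ccl(\bbX,c)=\Ccl(\bbX_0,c_0)$, and both then invoke the equivalence of Theorem \ref{Ccl_Icl_equivalence} to transfer this back (the paper writes $(\bbX,c)\cong\Icl\Ccl(\bbX,c)=\Icl\Ccl(\bbX_0,c_0)\cong(\bbX_0,c_0)$ via the natural isomorphism $\Icl\Ccl\cong 1$, while you use reflection of isomorphisms by the fully faithful $\Ccl$ --- the same idea in a different dress). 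The concluding (co)completeness claim is likewise handled identically via Corollary \ref{QCatClsCloDist_complete}.
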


\begin{proof}
It suffices to show that each $\CQ$-closure space $(\bbX,c)$ is isomorphic to $(\bbX_0,c_0)$ in the category $\QClsCloDist$. For this, note that $\Ccl(\bbX,c)=\sC(\bbX,c)=\sC(\bbX_0,c_0)=\Ccl(\bbX_0,c_0)$ by Proposition \ref{cPX=c0PX0}, and $\Icl\Ccl$ is naturally isomorphic to the identity functor on $\QClsCloDist^{\op}$ by Theorem \ref{Ccl_Icl_equivalence}. Therefore
$$(\bbX,c)\cong\Icl\Ccl(\bbX,c)=\Icl\Ccl(\bbX_0,c_0)\cong(\bbX_0,c_0)$$
in the category $\QClsCloDist$, as desired.
\end{proof}

In particular, for the case $\CQ={\bf 2}$, $\Sup$ is monadic over $\Set$, and it is known in category theory that
\begin{itemize}
\item for a \emph{solid} (=\emph{semi-topological} \cite{Tholen1979}) functor $\CE\lra\CB$, if $\CB$ is totally cocomplete, then so is $\CE$ \cite{Tholen1980};
\item every monadic functor over $\Set$ is solid (see \cite[Example 4.4]{Tholen1979});
\item $\Sup$ is a self-dual category, i.e., $\Sup\cong\Sup^{\op}$.
\end{itemize}
Thus we conclude:

\begin{cor} \label{ClsCloRel_Sup_equivalent}
The quantaloid $\ClsCloRel$ of closure spaces and closed continuous relations is equivalent to the quantaloid $\Sup$ of complete lattices and $\sup$-preserving maps. Therefore, $\ClsCloRel$ is totally cocomplete and totally complete and, in particular, cocomplete and complete.
\end{cor}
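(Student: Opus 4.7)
The plan is to obtain Corollary \ref{ClsCloRel_Sup_equivalent} as the $\CQ={\bf 2}$ specialization of Theorem \ref{QClsCloRel_QSup_equivalent}, combined with the well-known self-duality of $\Sup$ and a general transfer principle for total (co)completeness along solid functors.

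First, I would instantiate Theorem \ref{QClsCloRel_QSup_equivalent} at $\CQ={\bf 2}$. Since ${\bf 2}_0$ is a singleton, ${\bf 2}$-typed sets are just sets, ${\bf 2}$-relations are ordinary relations, and ${\bf 2}$-closure operators on powersets are ordinary closure operators; hence $\QClsCloRel|_{\CQ={\bf 2}}=\ClsCloRel$ and $\QSup|_{\CQ={\bf 2}}=\Sup$. This immediately yields the equivalence of quantaloids
\[
\ClsCloRel^{\op}\simeq\Sup.
\]
To pass from this duality to an equivalence $\ClsCloRel\simeq\Sup$, I would invoke the self-duality $\Sup\cong\Sup^{\op}$, which sends a complete lattice to its opposite and a $\sup$-preserving map to the opposite of its right adjoint; composing with the previous equivalence delivers $\ClsCloRel\simeq\Sup$.

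For the total (co)completeness claim, I would argue as follows. The forgetful functor $\Sup\lra\Set$ is monadic, and by the bulleted facts cited immediately before the corollary, every monadic functor over $\Set$ is solid, and a solid functor with totally cocomplete codomain has totally cocomplete domain. Since $\Set$ is totally cocomplete (it is in fact the prototypical totally cocomplete category), $\Sup$ is totally cocomplete; by self-duality $\Sup$ is also totally complete. Finally, total cocompleteness and total completeness are preserved under equivalences of categories (they are defined purely in terms of the existence of certain left/right adjoints to the Yoneda embedding, which are invariant under equivalence), so the equivalence $\ClsCloRel\simeq\Sup$ transports both properties to $\ClsCloRel$.

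No step here looks genuinely difficult, since Theorem \ref{QClsCloRel_QSup_equivalent} has already done all the heavy lifting; the only things to verify are the three bookkeeping points that (a) the specialization $\CQ={\bf 2}$ indeed identifies the two categories on the nose with their classical counterparts, (b) the self-duality isomorphism $\Sup\cong\Sup^{\op}$ is a quantaloid isomorphism (so that the composite equivalence respects the $\Sup$-enriched structure), and (c) total (co)completeness is an equivalence-invariant notion. If any step warrants care it is (b), where one must check that assigning to a $\sup$-preserving map the opposite of its right adjoint is functorial and preserves pointwise joins in each hom-lattice; this is standard but is the only spot requiring more than a line of verification.
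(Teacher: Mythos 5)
Your proposal is correct and follows essentially the same route as the paper: specialize Theorem \ref{QClsCloRel_QSup_equivalent} to $\CQ={\bf 2}$ to get $\ClsCloRel^{\op}\simeq\Sup$, compose with the self-duality $\Sup\cong\Sup^{\op}$, and derive total (co)completeness of $\Sup$ from its monadicity over $\Set$ via solidity, transferring everything along the equivalence. The bookkeeping points you flag (the on-the-nose identification at $\CQ={\bf 2}$ and the quantaloid-enriched nature of the self-duality) are exactly the details the paper leaves implicit.
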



\section{Example: Fuzzy closure spaces on fuzzy sets} \label{Example_Fuzzy_closure_spaces}

Based on the characterizations of fuzzy sets, fuzzy preorders and fuzzy powersets as quantaloid-enriched categories \cite{Hohle2011,Pu2012,Shen2013b,Tao2014}, in this section we introduce fuzzy closure spaces on fuzzy sets as an example of $\CQ$-closure spaces.

\subsection{Preordered fuzzy sets valued in a divisible quantale}

A quantaloid with only one object is a \emph{unital quantale} \cite{Rosenthal1990}. With $\&$ denoting the multiplication in a quantale $Q$ (i.e., the composition in the unique hom-set of the one-object quantaloid), one has the \emph{implications} $/$, $\bs$ in $Q$ (i.e., the left and right implications in the one-object quantaloid) determined by the adjoint property
$$x\& y\leq z\iff x\leq z/y\iff y\leq x\bs z\quad(x,y,z\in Q).$$

A unital quantale $(Q,\&)$ is \emph{divisible} \cite{Hohle1995a} if it satisfies one of the equivalent conditions in the following proposition:

\begin{prop} \label{divisible_condition} {\rm\cite{Pu2012,Tao2014}}
For a unital quantale $(Q,\&)$, the following conditions are equivalent:
\begin{itemize}
\item[\rm (i)] $\forall x,y\in Q$, $x\leq y$ implies $y\&a=x=b\&y$ for some $a,b\in Q$.
\item[\rm (ii)] $\forall x,y\in Q$, $x\leq y$ implies $y\&(y\bs x)=x=(x/y)\&y$.
\item[\rm (iii)] $\forall x,y,z\in Q$, $x,y\leq z$ implies $x\&(z\bs y)=(x/z)\&y$.
\item[\rm (iv)] $\forall x,y\in Q$, $x\&(x\bs y)=x\wedge y=(y/x)\&x$.
\end{itemize}
In this case, the unit of the quantale $(Q,\&)$ must be the top element of $Q$.
\end{prop}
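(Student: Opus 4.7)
The plan is to close the implication cycle $(i) \Rightarrow (ii) \Rightarrow (iii) \Rightarrow (iv) \Rightarrow (i)$ while extracting the consequence $1 = \top$ along the way. The direction $(ii) \Rightarrow (i)$ is immediate with witnesses $a = y \bs x$ and $b = x/y$, and $(iv) \Rightarrow (i)$ is equally immediate, since $x \leq y$ gives $x \wedge y = x$ so (iv) supplies the same witnesses. For $(i) \Rightarrow (ii)$ I would use a one-line adjunction squeeze: if $y \& a = x$ with $x \leq y$, then $a \leq y \bs x$, whence $x = y \& a \leq y \& (y \bs x) \leq x$ forces $y \& (y \bs x) = x$, and symmetrically for $(x/y) \& y$. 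For $(ii) \Rightarrow (iii)$, given $x, y \leq z$, applying (ii) twice gives $(x/z) \& z = x$ and $z \& (z \bs y) = y$, and associativity closes the argument:
$$(x/z) \& y = (x/z) \& z \& (z \bs y) = x \& (z \bs y).$$

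The main obstacle is $(iii) \Rightarrow (iv)$, because it requires first establishing $1 = \top$. The plan is to invoke (iii) with $z = y = \top$ (then the hypothesis $x, y \leq z$ holds for every $x$), obtaining $x \& (\top \bs \top) = (x / \top) \& \top$ for all $x$. Now $(x/\top) \& \top \leq x$ by adjunction, while $x = x \& 1 \leq x \& (\top \bs \top)$ using the trivial bound $1 \leq \top \bs \top$. Hence both sides equal $x$, so $\top \bs \top$ is a right unit; by uniqueness of the unit, $\top \bs \top = 1$. On the other hand, $\top \& a \leq \top$ holds for every $a \in Q$, so $\top \bs \top = \top$. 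Combining, $1 = \top$.

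Once $1 = \top$ is in hand, the rest of $(iii) \Rightarrow (iv)$ is short. Every $a \in Q$ now satisfies $a \leq 1 = \top$, hence $x \& a \leq x$ automatically; from the adjunction one then reads off $x \bs y = x \bs (x \wedge y)$. Applying (iii) to the triple $(x, x \wedge y, x)$ (legal, since $x \leq x$ and $x \wedge y \leq x$) gives $x \& (x \bs (x \wedge y)) = (x/x) \& (x \wedge y)$; and $x/x = 1$ because $x/x \geq 1 = \top$, so the right-hand side is $x \wedge y$. The symmetric identity $(y/x) \& x = x \wedge y$ follows by the parallel application of (iii) to $(x \wedge y, x, x)$. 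This completes the cycle, and the supplementary claim that the unit equals $\top$ has already been extracted en route in the $(iii) \Rightarrow (iv)$ step.
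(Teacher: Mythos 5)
Your proof is correct and complete. The paper does not actually prove this proposition (it is quoted from the cited references \cite{Pu2012,Tao2014}), so there is no in-text argument to compare against; your cycle (i)$\Rightarrow$(ii)$\Rightarrow$(iii)$\Rightarrow$(iv)$\Rightarrow$(i), with $1=\top$ extracted from the instance $x\&(\top\bs\top)=(x/\top)\&\top$ of (iii), is the standard self-contained argument, and every step checks out (the only point left implicit, $(x\wedge y)/x=y/x$ in the symmetric half of (iii)$\Rightarrow$(iv), follows from the same observation $a\&x\leq x$ that you already use for $x\bs y=x\bs(x\wedge y)$).
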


Divisible unital quantales cover most of the important truth tables in fuzzy set theory:

\begin{exmp} \phantomsection \label{divisible_quantale}
\begin{itemize}
\item[\rm (1)] Each frame is a divisible unital quantale.
\item[\rm (2)] Each complete BL-algebra \cite{Hajek1998} is a divisible unital quantale. In particular, the unit interval $[0,1]$ equipped with a continuous t-norm \cite{Klement2000} is a divisible unital quantale.
\item[\rm (3)] The extended real line $([0,\infty]^{\op},+)$ \cite{Lawvere1973} is a divisible unital quantale in which $y/x=x\bs y=\max\{0,y-x\}$.\footnote{To avoid confusion, the symbols $\max$, $\vee$, $\leq$, etc. between (extended) real numbers always refer to the standard order, although the quantale $([0,\infty]^{\op},+)$ is equipped with the reverse order of real numbers.}
\end{itemize}
\end{exmp}

Throughout this section, $Q$ always denotes a divisible unital quantale with the multiplication $\&$ and implications $/$, $\bs$ unless otherwise specified. The top and bottom elements in $Q$ are respectively $1$ and $0$ (note that in a divisible unital quantale $Q$, $1$ must be the unit for the multiplication $\&$ by Proposition \ref{divisible_condition}).

Being considered as a one-object quantaloid, $Q$-categories $\bbX=(X,\al)$ are widely known as \emph{crisp} sets $X$ equipped with \emph{fuzzy preorder} $\al:X\times X\lra Q$. In particular:

\begin{exmp}
\begin{itemize}
\item[\rm (1)] For the two-element Boolean algebra ${\bf 2}$, ${\bf 2}$-categories are just preordered sets.
\item[\rm (2)] If $(Q,\&)=([0,\infty]^{\op},+)$, then $Q$-categories are (generalized) \emph{metric spaces} \cite{Lawvere1973}; that is, sets $X$ carrying distance functions $a:X\times X\lra[0,\infty]$ satisfying $a(x,x)=0$ and $a(x,z)\leq a(x,y)+a(y,z)$ for all $x,y,z\in X$.
\end{itemize}
\end{exmp}

%

In order to characterize fuzzy preorder on \emph{fuzzy} sets, the following quantaloid $\DQ$ is crucial:

\begin{prop} \label{divisible_quantale_quantaloid} {\rm\cite{Hohle2011,Pu2012}}
For a divisible unital quantale $Q$, the following data define a quantaloid $\DQ$:
\begin{itemize}
\item $\ob(\DQ)=Q$;
\item $\DQ(x,y)=\{u\in Q:u\leq x\wedge y\}$ with inherited order from $Q$;\footnote{The readers should be cautious that different hom-sets in $\DQ$ are considered to be disjoint; that is, a $\DQ$-arrow $u\in\DQ(x,y)$ is a triple $(u,x,y)$ rather than an element $u$ of $Q$.}
\item the composition of $\DQ$-arrows $u\in\DQ(x,y)$, $v\in\DQ(y,z)$ is given by
    $$v\circ u=v\&(y\bs u)=(v/y)\&u;$$
\item the implications of $\DQ$-arrows are given by
    $$w\lda u=y\wedge z\wedge(w/(y\bs u))\quad\text{and}\quad v\rda w=x\wedge y\wedge((v/y)\bs w)$$
    for all $u\in\DQ(x,y)$, $v\in\DQ(y,z)$, $w\in\DQ(x,z)$;
\item the identity $\DQ$-arrow on $x$ is $x$ itself.
\end{itemize}
\end{prop}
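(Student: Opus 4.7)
The plan is to verify the quantaloid axioms for $\DQ$ by systematic use of divisibility, mainly through conditions (ii), (iii), and (iv) of Proposition \ref{divisible_condition} together with the fact (last sentence of that proposition) that the unit of $Q$ must be $1=\top$.

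First I would check that composition is well-defined. The equality of the two expressions $v\&(y\bs u)=(v/y)\&u$ for $u\in\DQ(x,y)$, $v\in\DQ(y,z)$ is exactly an instance of condition (iii) of Proposition \ref{divisible_condition}, since $u\leq y$ and $v\leq y$. The resulting element lies in $\DQ(x,z)$ because from the first form $v\circ u\leq v\&1=v\leq z$, and from the second form $v\circ u\leq 1\&u=u\leq x$, hence $v\circ u\leq x\wedge z$. Next, the identity laws: for $u\in\DQ(x,y)$, one has $u\circ\id_x=u\&(x\bs x)=u\&1=u$ using that $1$ is the unit (so $x\bs x=1$), and $\id_y\circ u=y\&(y\bs u)=u$ by condition (ii) of divisibility since $u\leq y$.

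The slickest step is associativity, which becomes almost automatic once both forms are in hand: for $u:x\to y$, $v:y\to z$, $w:z\to z'$, using the second form for $v\circ u$ and the first form for $w\circ v$, one computes
$$w\circ(v\circ u)=(w/z)\&(v\circ u)=(w/z)\&v\&(y\bs u)=(w\circ v)\&(y\bs u)=(w\circ v)\circ u.$$
For the lattice structure and composition preserving joins, each hom-set $\DQ(x,y)$ is the principal down-set $\{u\in Q\mid u\leq x\wedge y\}$, hence a complete lattice under the joins inherited from $Q$; and because $\&$ preserves joins in each variable in $Q$ (and those joins remain bounded above by $x\wedge z$), the same holds for $\circ$ in $\DQ$.

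Finally, for the implications: given $u\in\DQ(x,y)$, $v\in\DQ(y,z)$, $w\in\DQ(x,z)$, the hypothesis $v\leq y\wedge z$ is already forced, so the extra meet $y\wedge z$ in the formula for $w\lda u$ is harmless, and $v\leq w\lda u$ reduces to $v\leq w/(y\bs u)$, i.e.\ $v\&(y\bs u)\leq w$, which is $v\circ u\leq w$; the right implication is analogous. The potential obstacle is bookkeeping rather than depth: one must keep track of which variables are bounded by which meets in order to confirm that the decorations $y\wedge z$ and $x\wedge y$ in the implication formulas land in the correct hom-sets, but divisibility ensures all adjunctions survive the restriction from $Q$ to the down-sets.
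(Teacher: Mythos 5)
The paper states this proposition as a cited result from \cite{Hohle2011,Pu2012} and contains no proof of its own, so there is no in-paper argument to compare against; judged on its own terms, your verification is correct and essentially complete. The key moves are all sound: the equality of the two forms of composition is exactly Proposition \ref{divisible_condition}(iii) applied to $u,v\leq y$; membership $v\circ u\in\DQ(x,z)$ follows by bounding each form separately, using that $1=\top$ is the unit; the identity laws use $x\bs x=1$ and Proposition \ref{divisible_condition}(ii); and your associativity computation, which alternates between the two forms of $\circ$ so that everything collapses to associativity of $\&$, is exactly the right way to do it.

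One spot deserves more than the single clause you give it: join preservation of $\circ$ in each variable does \emph{not} follow from ``$\&$ preserves joins in each variable'' applied to only one of the two formulas, because $y\bs(-)$ and $(-)/y$ are right adjoints in $Q$ and need not preserve joins. One must use $v\circ u=(v/y)\& u$ for joins in the first argument and $v\circ u=v\&(y\bs u)$ for joins in the second; this is legitimate precisely because you have already proved the two expressions coincide. Since that equality is established first, your argument does go through, but the proof should state explicitly which form is being invoked for which variable. Your treatment of the implication formulas is correct as written: the meets with $y\wedge z$ and $x\wedge y$ only serve to place the result in the correct hom-set, and for arrows already lying there the adjunction reduces to the residuation in $Q$.
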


\begin{exmp} \phantomsection \label{DQ_exmp}
\begin{itemize}
\item[\rm (1)] For the two-element Boolean algebra ${\bf 2}=\{0,1\}$, $\CD{\bf 2}(1,1)$ contains two arrows: $0$ and $1$, and $0$ is the only arrow in every other hom-set.
\item[\rm (2)] If $(Q,\&)=([0,\infty]^{\op},+)$, then $\DQ(x,y)=\ua(x\vee y)$, i.e., the upper set generated by $x\vee y$. The composition of $u\in\DQ(x,y)$, $v\in\DQ(y,z)$ is
$v\circ u=v+u-y$.
\end{itemize}
\end{exmp}

A $Q$-typed set (or equivalently, a $(\DQ)_0$-typed set) is exactly a crisp set $X$ equipped with a map $m:X\lra Q$; that is, a \emph{fuzzy set} \cite{Zadeh1965}. $(X,m)$ is also called a \emph{$Q$-subset} of $X$, where the value $mx$ is the membership degree of $x$ in $(X,m)$. Fuzzy sets and membership-preserving maps constitute the slice category $\Set\da Q$.

A $\DQ$-relation $\phi:(X,m_X)\oto(Y,m_Y)$ is a \emph{fuzzy relation} between fuzzy sets $(X,m_X)$ and $(Y,m_Y)$, which is a map $X\times Y\lra Q$ satisfying
\begin{equation} \label{fuzzy_relation}
\phi(x,y)\leq m_X x\wedge m_Y y
\end{equation}
for all $x\in X$ and $y\in Y$. With the value $\phi(x,y)$ interpreted as the degree of $x$ and $y$ being related, Equation (\ref{fuzzy_relation}) asserts that the degree of $x$ and $y$ being related cannot exceed the membership degree of $x$ in $X$ or that of $y$ in $Y$.

A $\DQ$-category $\bbX=(X,m,\al)$ is exactly a \emph{fuzzy set} $(X,m)$ equipped with a \emph{fuzzy preorder} $\al$ (or, \emph{preordered fuzzy set} for short) \cite{Hohle2011,Pu2012}. In elementary words, $\al:X\times X\lra Q$ is a map satisfying
\begin{itemize}
\item $\al(x,y)\leq mx\wedge my$,
\item $mx\leq\al(x,x)$,
\item $\al(y,z)\&(my\bs\al(x,y))=(\al(y,z)/my)\&\al(x,y)\leq\al(x,z)$
\end{itemize}
for all $x,y,z\in X$. Note that the first and the second conditions together lead to $mx=\al(x,x)$ for all $x\in X$, thus a preordered fuzzy set may be described by a pair $(X,\al)$, where $X$ is a crisp set and $\al:X\times X\lra Q$ is a map, such that
\begin{itemize}
\item $\al(x,y)\leq\al(x,x)\wedge\al(y,y)$,
\item $\al(y,z)\&(\al(y,y)\bs\al(x,y))=(\al(y,z)/\al(y,y))\&\al(x,y)\leq\al(x,z)$
\end{itemize}
for all $x,y,z\in X$.

A $\DQ$-functor $f:(X,\al)\lra(Y,\be)$ is a \emph{monotone} map between preordered fuzzy sets, which is a map $f:X\lra Y$ satisfying
$$\al(x,x)=\be(fx,fx)\quad\text{and}\quad\al(x,x')\leq\be(fx,fx')$$
for all $x,x'\in X$. Preordered fuzzy sets and monotone maps constitute the category $\DQ\text{-}\Cat$.

\begin{exmp} \phantomsection \label{Q_preordered_set}
\begin{itemize}
\item[\rm (1)] A $\CD{\bf 2}$-category $(X,\al)$ is a ``partially defined'' preordered set; that is, a subset $A\subseteq X$ consisting of all those elements $x\in X$ with $\al(x,x)=1$ and a preorder on $A$. A $\CD{\bf 2}$-functor $f:(X,A)\lra(Y,B)$ is a map $f:X\lra Y$ monotone on $A=f^{\la}(B)$.
\item[\rm (2)] If $(Q,\&)=([0,\infty]^{\op},+)$, then $\DQ$-categories are (generalized) \emph{partial metric spaces}\footnote{The term ``partial metric'' was originally introduced by Matthews \cite{Matthews1994} with additional requirements of finiteness ($a(x,y)<\infty$), symmetry ($a(x,y)=a(y,x)$) and separatedness ($a(x,x)=a(x,y)=a(y,y)\iff x=y$) which are dropped here.} \cite{Hohle2011,Pu2012}; that is, sets $X$ carrying distance functions $a:X\times X\lra[0,\infty]$ satisfying $a(x,x)\vee a(y,y)\leq a(x,y)$ and $a(x,z)\leq a(x,y)+a(y,z)-a(y,y)$ for all $x,y,z\in X$. $\DQ$-functors $f:(X,a)\lra(Y,b)$ are non-expanding maps satisfying $a(x,x)=b(fx,fx)$ for all $x\in X$.
\end{itemize}
\end{exmp}

It is obvious that every $Q$-category $(X,\al)$ is a \emph{global} $\DQ$-category in the sense that $\al(x,x)=1$ for all $x\in X$; in fact, $Q\text{-}\Cat$ is a coreflective subcategory of $\DQ\text{-}\Cat$. So, crisp sets equipped with fuzzy preorder are a special case of preordered fuzzy sets; for example, partial metric spaces $(X,a)$ in which $a(x,x)=0$ for all $x\in X$ are exactly metric spaces.

\subsection{Fuzzy powersets of fuzzy sets}

For a preordered fuzzy set $(X,\al)$ (with underlying fuzzy set $(X,m)$), its $\DQ$-category of presheaves $(\sP(X,\al),S_{(X,\al)})$ is again a preordered fuzzy set, whose underlying fuzzy set $(\sP(X,\al),M)$ is the \emph{fuzzy set of lower fuzzy subsets} of $(X,\al)$ \cite{Shen2013b}:
\begin{itemize}
\item A fuzzy set $(X,n)$ is a \emph{fuzzy subset} of $(X,m)$ if $nx\leq mx$ for all $x\in X$; that is, the membership degree of $x$ in $(X,n)$ does not exceed that of $x$ in $(X,m)$.
\item A \emph{lower fuzzy subset} of $(X,\al)$ is a fuzzy subset $(X,l)$ of $(X,m)$ satisfying
    $$ly\&(my\bs\al(x,y))=(ly/my)\&\al(x,y)\leq lx$$
    for all $x,y\in X$, which intuitively means that $y$ is in $(X,l)$ and $x$ is less than or equal to $y$ implies $x$ is in $(X,l)$.
\item A \emph{potential lower fuzzy subset} of $(X,\al)$ is a triple $(X,l,q)$, where $(X,l)$ is a lower fuzzy subset of $(X,\al)$ and $q\in\CQ_0$, such that $lx\leq q$ for all $x\in X$. Thus $(X,l,q)$ satisfies
    $$ly\&(my\bs\al(x,y))=(ly/my)\&\al(x,y)\leq lx\leq mx\wedge q$$
    for all $x,y\in X$. In other words, potential lower fuzzy subsets $(X,l,q)$ of $(X,\al)$ are exactly $\DQ$-distributors $(X,\al)\oto\{q\}$.
\item $\sP(X,\al)$ is a crisp set whose elements are all the potential lower fuzzy subsets of $(X,\al)$. As the fuzzy set of lower fuzzy subsets of $(X,\al)$, $(\sP(X,\al),M)$ is a fuzzy set (i.e., a $Q$-subset of $\sP(X,\al)$) with the membership degree map $M:\sP(X,\al)\lra Q$ given by
    $$M(X,l,q)=q,$$
    which gives the degree of $(X,l,q)$ being a lower fuzzy subset of $(X,\al)$.
\end{itemize}

The separated preorder $S_{(X,\al)}$ on $(\sP(X,\al),M)$ is given by
\begin{equation} \label{SXal}
S_{(X,\al)}((X,l,q),(X,l',q'))=q\wedge q'\wedge\bw_{x\in X}l'x/(q\bs lx)
\end{equation}
for all $(X,l,q),(X,l',q')\in\sP(X,\al)$, which is intuitively the inclusion order of potential lower fuzzy subsets.

Dually, the $\DQ$-category of copresheaves on $(X,\al)$ is the preordered \emph{fuzzy set of upper fuzzy subsets} of $(X,\al)$ and we do not bother spell out the details.

\begin{exmp}[Fuzzy powersets] \label{fuzzy_powerset}
For each fuzzy set $(X,m)$, the \emph{fuzzy powerset} of $(X,m)$ \cite{Shen2013b} is defined as
$$(\sP(X,m),M):=(\sP(X,\id_{(X,m)}),M).$$
Explicitly, elements in $\sP(X,m)$ are \emph{potential fuzzy subsets} $(X,n,q)$ of $(X,m)$ that satisfy
\begin{equation} \label{potential_fuzzy_subset}
nx\leq mx\wedge q
\end{equation}
for all $x\in X$; or equivalently, fuzzy relations $(X,m)\oto\{q\}$. 
It should be reminded that, although $(X,m)$ is a discrete $\DQ$-category, $(\sP(X,m),S_{(X,m)})$ is \emph{not} discrete, whose structure relies on that of $\DQ$.

We point out that for a crisp set $X$, the fuzzy powerset $(\sP X,M)$ of $X$ is different from the \emph{crisp set} $Q^X$ of maps from $X$ to $Q$, which is referred to as the \emph{$Q$-powerset} of $X$ (also called the fuzzy powerset of $X$ by some authors) in the literature:
\begin{itemize}
\item $Q^X$ is a \emph{crisp set} that consists of \emph{fuzzy subsets} $(X,n)$ of $X$;
\item $(\sP X,M)$ is a \emph{fuzzy set} whose underlying crisp set $\sP X$ consists of \emph{potential fuzzy subsets} $(X,n,q)$ of $X$.
\item From the viewpoint of category theory, $Q^X$ is the underlying set of the presheaf $Q$-category of the discrete $Q$-category $X$, while $(\sP X,M)$ is the underlying $(\DQ)_0$-typed set of the presheaf $\DQ$-category of the discrete $\DQ$-category $X$.
\end{itemize}

\end{exmp}

\begin{exmp}
For a partial metric space $(X,a)$, a presheaf on $(X,a)$ is a pair $(h,r)$, where $h:X\lra[0,\infty]$ is a map and $r\in[0,\infty]$, such that
$$a(x,x)\vee r\leq hx\leq a(x,y)+hy-a(y,y)$$
for all $x,y\in X$; dually, such a pair $(h,r)$ satisfying
$$a(x,x)\vee r\leq hx\leq a(y,x)+hy-a(y,y)$$
for all $x,y\in X$ is a copresheaf on $(X,a)$. 
\end{exmp}

%

A preordered fuzzy set $(X,m,\al)$ is complete if every potential lower fuzzy subset $(X,l,q)\in\sP(X,\al)$ has a supremum given by an element $s\in X$ with membership degree $ms=q$, such that
$$\al(s,x)=q\wedge mx\wedge\bw_{y\in X}\al(y,x)/(q\bs ly)$$
for all $x\in X$. One may translate the above equation as: $s$ is less than or equal to $x$ if, and only if, each $y$ in $(X,l,q)$ is less than or equal to $x$; furthermore, $ms=q$ indicates that the degree of $(X,l,q)$ being a lower fuzzy subset of $(X,m,\al)$ is equal to the membership degree of its supremum, if exists, in $(X,m)$. 

Separated complete preordered fuzzy sets and $\sup$-preserving maps constitute the quantaloid $\DQ\text{-}\Sup$.

\begin{exmp}
\begin{itemize}
\item[\rm (1)] A global separated complete preordered fuzzy set, i.e., an object in $Q\text{-}\Sup$, is also called a \emph{complete $Q$-lattice} \cite{Shen2013}. However, although $Q\text{-}\Cat$ is a coreflective subcategory of $\DQ\text{-}\Cat$ and the coreflector sends each separated complete preordered fuzzy set to a complete $Q$-lattice, it should be cautious that no object of $Q\text{-}\Sup$ lies in $\DQ\text{-}\Sup$ as long as $Q$ has more than one elements; in fact, a complete preordered fuzzy set valued in a non-trivial quantale $Q$ can never be global as Remark \ref{complete_nonempty} indicates.
\item[\rm (2)] A partial metric space $(X,a)$ is complete if for every presheaf $(h,r)$ on $(X,a)$, there is $s\in X$ such that
$$a(s,s)=r\quad\text{and}\quad a(s,x)=r\vee a(x,x)\vee\bv_{y\in X}(a(y,x)-hy+r)$$
for all $x\in X$. As a comparison, a metric space $(X,a)$ is complete\footnote{The completeness of metric spaces discussed here is in the categorical sense which is stronger than the classical Cauchy completeness.} if for every map $h:X\lra[0,\infty]$ satisfying
$$\forall x,y\in X:\ hx-hy\leq a(x,y),$$
there exists $s\in X$ with
$$a(s,x)=0\vee\bv_{y\in X}(a(y,x)-hy)$$
for all $x\in X$. As we point out in (1), the coreflector from $\CD[0,\infty]^{\op}\text{-}\Cat$ to $[0,\infty]^{\op}\text{-}\Cat$ sends each complete partial metric space to a complete metric space, but a metric space can never be complete when it is considered as a global partial metric space.
\end{itemize}
\end{exmp}

\subsection{Fuzzy closure spaces on fuzzy sets} \label{Fuzzy_closure_spaces}

By a \emph{fuzzy closure space} on a fuzzy set we mean an object in the category $\DQ\text{-}\Cls$. Explicitly, a fuzzy closure space is a triple $(X,m,c)$, where $(X,m)$ is a fuzzy set, $c:\sP(X,m)\lra\sP(X,m)$ is a map such that for any potential fuzzy subsets $(X,n,q),(X,n',q')\in\sP(X,m)$,
\begin{itemize}
\item $M(c(X,n,q))=q$, where $(\sP(X,m),M)$ is the fuzzy powerset of $(X,m)$,
\item $S_{(X,m)}((X,n,q),(X,n',q'))\leq S_{(X,m)}(c(X,n,q),c(X,n',q'))$,
\item $(X,n,q)\leq c(X,n,q)$, and
\item $cc(X,n,q)=c(X,n,q)$.
\end{itemize}

As we mentioned for general $\CQ$-closure spaces, a fuzzy closure space $(X,m,c)$ may be equivalently described by the fixed points of $c$; or, one may call them \emph{potential closed fuzzy subsets} of $(X,m)$. To explain this term, first note that a potential fuzzy subset $(X,n,q)$ of $(X,m)$ is \emph{closed} if $c(X,n,q)=(X,n,q)$; next, putting the potential closed fuzzy subsets of $(X,m)$ together, one again obtains a \emph{fuzzy set} $(\sC(X,m,c),\overline{M})$, where $\overline{M}:\sC(X,m,c)\lra Q$ is the restriction of $M:\sP(X,m)\lra Q$ on $\sC(X,m,c)$. Then elements in $\sC(X,m,c)$ are \emph{potential closed fuzzy subsets} in the sense that, each potential fuzzy subset $(X,n,q)\in\sC(X,m,c)$ is closed, and $\overline{M}(X,n,q)=q$ gives the degree of $(X,n,q)$ being a closed fuzzy subset of $(X,m)$. That is to say:
\begin{quote}
A fuzzy closure space is determined by a \emph{fuzzy set} $(X,m)$ and a \emph{fuzzy set of closed fuzzy subsets} of $(X,m)$.
\end{quote}

A continuous map $f:(X,m_X,c)\lra(Y,m_Y,d)$ between fuzzy closure spaces is a membership-preserving map $f:(X,m_X)\lra(Y,m_Y)$ such that the inverse images of potential closed fuzzy subsets of $(Y,m_Y,d)$ are closed in $(X,m_X,c)$ (see Proposition \ref{continuous_functor_condition}(iv)). The following corollary is a natural generalization of the well-known fact that the category $\Cls$ of closure spaces and continuous maps is topological over $\Set$:

\begin{cor} \label{DQCls_topological}
The category $\DQ\text{-}\Cls$ of fuzzy closure spaces and continuous maps is topological over the category $\Set\da Q$ of fuzzy sets and membership-preserving maps.
\end{cor}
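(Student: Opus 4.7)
The plan is to obtain this corollary as the direct specialization of Proposition \ref{QCls_total} to the quantaloid $\CQ=\DQ$. The key observation is that, by Proposition \ref{divisible_quantale_quantaloid}, $\ob(\DQ)=Q$, so the base category $\Set\da\CQ_0$ appearing in Proposition \ref{QCls_total} becomes $\Set\da Q$, which is precisely the category of fuzzy sets and membership-preserving maps recalled in the paragraph following Example \ref{DQ_exmp}.

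Next I would verify that the definition of $\DQ\text{-}\Cls$, as a full subcategory of $\DQ\text{-}\CatCls$ consisting of $\DQ$-closure spaces with discrete underlying $\DQ$-categories, matches word-for-word the notion of fuzzy closure space spelled out in Subsection \ref{Fuzzy_closure_spaces}. A discrete $\DQ$-category is exactly a $Q$-typed set $(X,m)$, i.e., a fuzzy set, so the data $(\bbX,c)$ reduces to a triple $(X,m,c)$ with $c$ a $\DQ$-closure operator on the presheaf $\DQ$-category $\sP(X,m)$ (the fuzzy powerset of Example \ref{fuzzy_powerset}); the four axioms unpacked in Subsection \ref{Fuzzy_closure_spaces} are exactly the unravelling of ``$\DQ$-functor $c:\sP(X,m)\lra\sP(X,m)$ with $1\leq c$ and $cc\leq c$''. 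Likewise, a continuous map between fuzzy closure spaces is, by definition, a continuous type-preserving map, which is precisely a morphism in $\DQ\text{-}\Cls$.

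With these identifications in hand, the conclusion is immediate: Proposition \ref{QCls_total} asserts that the forgetful functor $\CQ\text{-}\Cls\lra\Set\da\CQ_0$ is topological for every small quantaloid $\CQ$, and applying this with $\CQ=\DQ$ produces exactly the forgetful functor $\DQ\text{-}\Cls\lra\Set\da Q$ claimed in the corollary. No new lifting argument is required; one just rewrites the initial-lifting construction of Proposition \ref{U_topological} with discrete $\DQ$-categories in place of general $\CQ$-categories.

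I do not anticipate any obstacle here, since the work has already been done at the level of an abstract small quantaloid. The only thing that must be checked carefully is the dictionary between the $\DQ$-categorical language (presheaves on a discrete $\DQ$-category, type-preservation, $\DQ$-closure operators) and the fuzzy-set language (fuzzy powersets, membership-preservation, closure operators on fuzzy powersets), and this dictionary is precisely what Subsection \ref{Fuzzy_closure_spaces} and Example \ref{fuzzy_powerset} are designed to establish.
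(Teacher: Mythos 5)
Your proposal is correct and is exactly the route the paper intends: Corollary \ref{DQCls_topological} is the specialization of Proposition \ref{QCls_total} to $\CQ=\DQ$, using $\ob(\DQ)=Q$ to identify $\Set\da\CQ_0$ with $\Set\da Q$, together with the dictionary between discrete $\DQ$-categories and fuzzy sets already set up in Subsection \ref{Fuzzy_closure_spaces} and Example \ref{fuzzy_powerset}. No further argument is needed.
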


The readers should carefully distinguish fuzzy closure spaces defined here from $Q$-closure spaces (also called ``fuzzy closure spaces'' by some authors) in the existing literature: a $Q$-closure space is given by a \emph{crisp set} $X$ and a \emph{crisp set} of closed $Q$-subsets of $X$; the category of $Q$-closure spaces and continuous maps is $Q\text{-}\Cls$, where $Q$ is considered as a one-object quantaloid. As a comparison to Corollary \ref{DQCls_topological}, note that $Q\text{-}\Cls$ is topological over $\Set$, since the underlying sets of $Q$-closure spaces are crisp.

\begin{exmp}[Specialization preorder of fuzzy closure spaces]
For a fuzzy closure space $(X,m,c)$, $\al:=\tc\rda\tc$ defines the specialization preorder (see Subsection \ref{Specialization_QCat}) on the fuzzy set $(X,m)$ given by
$$\al(x,y)=mx\wedge my\wedge\bw_{(X,n,q)\in\sC(X,m,c)}(ny/my)\bs nx$$
for all $x,y\in X$, which extends the notion of the specialization order of fuzzy topological spaces (on crisp sets) in \cite{Lai2006}.
\end{exmp}


\begin{exmp}[Alexandrov spaces on fuzzy sets]
A $\DQ$-Alexandrov space is a fuzzy set $(X,m)$ equipped with a family of potential fuzzy subsets of $(X,m)$ that is closed with respect to underlying joins, underlying meets, tensors and cotensors in $(\sP(X,m),S_{(X,m)})$. Thus, $\DQ$-Alexandrov spaces are in fact a special kind of \emph{fuzzy topological spaces on fuzzy sets} (a notion that we will try to clarify in future works): recall that, classically, an Alexandrov space is a topological space in which arbitrary joins and arbitrary meets of open subsets are still open.
\end{exmp}


A fuzzy relation $\zeta:(X,m_X,c)\oto(Y,m_Y,d)$ between fuzzy closure spaces is continuous if
$$\zeta_*:(\sP(X,m_X),S_{(X,m_X)})\lra(\sP(Y,m_Y),S_{(Y,m_Y)})$$
sends each potential closed fuzzy subset of $(X,m_X,c)$ to a potential closed fuzzy subset of $(Y,m_Y,d)$ (see Proposition \ref{continuous_dist_condition}(iv)). $\zeta$ is moreover closed if $(X,\zeta(-,y),m_Y y)$ is a potential closed fuzzy subset of $(X,m_X,c)$ for all $y\in Y$. From Theorem \ref{QClsCloRel_QSup_equivalent} one immediately has:

\begin{cor}
The quantaloid $(\DQ\text{-}\ClsRel)_{\cl}$ of fuzzy closure spaces and closed continuous fuzzy relations is dually equivalent to the quantaloid $\DQ\text{-}\Sup$ of separated complete preordered fuzzy sets and $\sup$-preserving maps.
\end{cor}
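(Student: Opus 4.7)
The plan is simply to instantiate Theorem \ref{QClsCloRel_QSup_equivalent} at the specific small quantaloid $\CQ = \DQ$ constructed in Proposition \ref{divisible_quantale_quantaloid}. Once this identification is made, the corollary is immediate; the bulk of the (very short) argument is just unwinding definitions so that the reader recognizes both sides as special cases of the general statement.

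First I would record that $\DQ$ is a small quantaloid (Proposition \ref{divisible_quantale_quantaloid}), so the general machinery of Sections \ref{QCat}--\ref{Continuous_distributor} applies with $\CQ := \DQ$. Next, I would identify the left-hand side: objects of $\QClsCloRel$ with $\CQ = \DQ$ are $\DQ$-closure spaces whose underlying $\DQ$-category is discrete, i.e., just a $(\DQ)_0$-typed set, which by the discussion in Subsection \ref{Fuzzy_closure_spaces} is precisely a fuzzy set $(X,m)$; the closure operator is then a $\DQ$-closure operator on the fuzzy powerset $(\sP(X,m),S_{(X,m)})$. Morphisms are by definition closed continuous $\DQ$-relations between such discrete objects, which are exactly the closed continuous fuzzy relations described immediately above the corollary. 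Hence $\QClsCloRel = (\DQ\text{-}\ClsRel)_{\cl}$ as quantaloids.

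Then I would identify the right-hand side: $\QSup$ with $\CQ = \DQ$ is by definition $\DQ\text{-}\Sup$, the quantaloid of separated complete $\DQ$-categories (i.e., separated complete preordered fuzzy sets) and $\sup$-preserving $\DQ$-functors, as made explicit in the paragraph preceding this corollary. With both identifications in place, Theorem \ref{QClsCloRel_QSup_equivalent} delivers an equivalence of quantaloids $\QClsCloRel^{\op} \simeq \QSup$, which translates to the claimed dual equivalence $(\DQ\text{-}\ClsRel)_{\cl}^{\op} \simeq \DQ\text{-}\Sup$.

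There is no real obstacle: the only thing to take care of is bookkeeping, namely being clear that ``discrete underlying $\CQ$-category'' at $\CQ = \DQ$ coincides with ``fuzzy set'' (not crisp set), so that the quantaloid produced by the general theory really matches the fuzzy-set-based quantaloid named in the corollary. If desired, one can additionally note that the equivalence is realized by the functors $\Ccl$ and $\Icl$ of Theorem \ref{Ccl_Icl_equivalence} restricted to discrete $\DQ$-categories, sending a fuzzy closure space $(X,m,c)$ to the separated complete preordered fuzzy set $(\sC(X,m,c),\overline{M},\overline{S})$ of its potential closed fuzzy subsets, but this is not needed for the statement itself.
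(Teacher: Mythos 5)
Your proposal is correct and follows exactly the paper's route: the paper states this corollary as an immediate instance of Theorem \ref{QClsCloRel_QSup_equivalent} at $\CQ=\DQ$, with the identifications of fuzzy sets as discrete $\DQ$-categories and of closed continuous fuzzy relations as closed continuous $\DQ$-relations already set up in the preceding paragraphs. Your extra bookkeeping (checking that ``discrete underlying $\DQ$-category'' means fuzzy set, not crisp set) is exactly the right point to be careful about and matches the paper's discussion.
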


In the case that $Q$ is a \emph{commutative} unital quantale, it is not difficult to see that $Q\text{-}\Sup$ is self-dual, and thus Theorem \ref{QClsCloRel_QSup_equivalent} reduces to:

\begin{cor}
For a commutative unital quantale $Q$, the quantaloid $(Q\text{-}\ClsRel)_{\cl}$ of $Q$-closure spaces and closed continuous $Q$-relations is equivalent to the quantaloid $Q\text{-}\Sup$ of complete $Q$-lattices and $\sup$-preserving maps.
\end{cor}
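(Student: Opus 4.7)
The plan is to derive this corollary directly from Theorem \ref{QClsCloRel_QSup_equivalent} specialized to $\CQ = Q$, together with the observation that for a commutative unital quantale $Q$, the quantaloid $Q\text{-}\Sup$ is self-dual. Concretely, Theorem \ref{QClsCloRel_QSup_equivalent} already yields the dual equivalence
$$(Q\text{-}\ClsRel)_{\cl}^{\op} \simeq Q\text{-}\Sup,$$
so the only remaining task is to eliminate the ``$\op$'' on the left-hand side by showing that $Q\text{-}\Sup \simeq (Q\text{-}\Sup)^{\op}$.

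For the self-duality, I would invoke the quantaloid isomorphism
$$(\QSup)^{\op} \cong \CQ^{\op}\text{-}\Sup$$
recorded just before Corollary \ref{QCatClsCloDist_complete}. Specializing to $\CQ = Q$ (viewed as a one-object quantaloid), it suffices to observe that when the binary operation $\&$ of $Q$ is commutative, reversing composition in the one-object quantaloid $Q$ leaves the composition unchanged, so $Q^{\op} \cong Q$ as quantaloids. Consequently $Q^{\op}\text{-}\Sup \cong Q\text{-}\Sup$, giving $(Q\text{-}\Sup)^{\op} \cong Q\text{-}\Sup$.

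Combining these two ingredients yields the chain of quantaloid equivalences
$$(Q\text{-}\ClsRel)_{\cl} \simeq \bigl((Q\text{-}\ClsRel)_{\cl}^{\op}\bigr)^{\op} \simeq (Q\text{-}\Sup)^{\op} \simeq Q\text{-}\Sup,$$
which is the desired statement; note that under this chain the composite functor sends a $Q$-closure space $(X,c)$ to the complete $Q$-lattice $\sC(X,c)$ of its closed fuzzy subsets, in accordance with the assignment $(\bbX,c)\mapsto\sC(\bbX,c)$ underlying $\Ccl$. There is no serious obstacle here; the only point requiring a moment of care is verifying that for a \emph{commutative} quantale $Q$ the opposite one-object quantaloid is isomorphic (not merely anti-isomorphic) to $Q$, a verification that is immediate from commutativity of $\&$.
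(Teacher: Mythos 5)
Your proposal is correct and follows the paper's own route: the paper obtains this corollary in exactly the same way, by specializing Theorem \ref{QClsCloRel_QSup_equivalent} to the one-object quantaloid $Q$ and then invoking the self-duality of $Q\text{-}\Sup$ for commutative $Q$, which you justify more explicitly than the paper does via the isomorphism $(\QSup)^{\op}\cong\CQ^{\op}\text{-}\Sup$ together with $Q^{\op}\cong Q$. The only very minor imprecision is your closing parenthetical: after composing with the self-duality, the resulting equivalence sends $(X,c)$ to the \emph{dual} of $\sC(X,c)$ rather than to $\sC(X,c)$ itself, but this does not affect the statement, which asserts only the existence of an equivalence of quantaloids.
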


\section{Conclusion}

The following diagram summarizes the pivotal categories and functors treated in this paper:
$$\bfig
\square|arla|/->`@{^(->}@<-5pt>`@{^(->}@<-5pt>`->/<1000,500>[\QCls`(\QClsRel)^{\op}`\QCatCls`(\QClsDist)^{\op};(-)^{\nat}`\dv``(-)^{\nat}]
\square(1000,0)|arra|/->`@{^(->}@<-5pt>`@{^(->}@<-2pt>`->/<1200,500>[(\QClsRel)^{\op}`\QClsCloRel^{\op}`(\QClsDist)^{\op}`\QClsCloDist^{\op};\cl^{\op}``\simeq`\cl^{\op}]
\morphism/@{->}@<-6pt>/<0,500>[\QCatCls`\QCls;(-)_0]
\morphism(1000,0)|r|/@{->}@<-5pt>/<0,500>[(\QClsDist)^{\op}`(\QClsRel)^{\op};(-)_0^{\op}]
\morphism(2200,0)|r|/@{->}@<-10pt>/<0,500>[\QClsCloDist^{\op}`\QClsCloRel^{\op};(-)_0^{\op}]
\morphism|l|/@{->}@<-5pt>/<0,-600>[\QCatCls`\QSup;\sC]
\morphism(0,-600)/@{->}@<-5pt>/<0,600>[\QSup`\QCatCls;\sI]
\place(0,-300)[\mbox{\scriptsize$\dv$}]
\morphism(1000,0)|l|/@{->}@<-4pt>/<-1000,-600>[(\QClsDist)^{\op}`\QSup;\Cd]
\morphism(0,-600)|r|/@{->}@<-2pt>/<1000,600>[\QSup`(\QClsDist)^{\op};\Id]
\morphism(2200,-50)|l|<-2020,-520>[`;\Ccl]
\morphism(200,-650)|r|<2100,540>[`;\Icl]
\place(1200,-350)[\mbox{\rotatebox{15}{\scriptsize$\simeq$}}]
\morphism(-800,500)|a|/@{<-}@<4pt>/<800,0>[\QCat`\QCls;\sS]
\morphism(0,500)|b|/@{<-}@<4pt>/<-800,0>[\QCls`\QCat;\sD]
\place(-400,505)[\mbox{\rotatebox{90}{\scriptsize$\dv$}}]
\morphism(-880,450)|b|<720,-1020>[`;\sP]
\morphism(-80,-500)/^(->/<-700,1000>[`;]
\place(-460,-60)[\mbox{\rotatebox{35}{\scriptsize$\dv$}}]
\efig$$

Besides the adjunctions and equivalences illustrated in the above diagram, we also conclude the total (co)completeness of $\QCatCls$ and $\QCls$ through their topologicity respectively over $\QCat$ and $\Set\da\CQ_0$, and the (co)completeness of $\QClsCloDist$ and $\QClsCloRel$ through their monadicity over $\Set\da\CQ_0$. However, although $\QClsDist$ and $\QClsRel$ are respectively topological over $\QDist$ and $\QRel$, there is not much to say about the (co)completeness of $\QClsDist$ and $\QClsRel$, since $\QDist$ and $\QRel$ have few (co)limits as already the case $\CQ={\bf 2}$ shows.

\section*{Acknowledgements}

The author acknowledges the support of Natural Sciences and Engineering Research Council of Canada (Discovery Grant 501260 held by Professor Walter Tholen). The author is also grateful to the anonymous reviewers whose valuable remarks and suggestions significantly improve the presentation and readability of this paper.





\begin{thebibliography}{10}

\bibitem{Adamek1990}
J.~Ad{\'a}mek, H.~Herrlich, and G.~E. Strecker.
\newblock {\em Abstract and Concrete Categories: The Joy of Cats}.
\newblock Wiley, New York, 1990.

\bibitem{Bvelohlavek2001a}
R.~B{\v e}lohl{\' a}vek.
\newblock Fuzzy closure operators.
\newblock {\em Journal of Mathematical Analysis and Applications},
  262(2):473--489, 2001.

\bibitem{Borger1990}
R.~B{\"o}rger and W.~Tholen.
\newblock Total categories and solid functors.
\newblock {\em Canadian Journal of Mathematics}, 42:213--229, 1990.

\bibitem{Chang1968}
C.~L. Chang.
\newblock Fuzzy topological spaces.
\newblock {\em Journal of Mathematical Analysis and Applications},
  24(1):182--190, 1968.

\bibitem{Ghanim1989}
M.~H. Ghanim and H.~M. Hasan.
\newblock {$L$}-closure spaces.
\newblock {\em Fuzzy Sets and Systems}, 33(3):383--391, 1989.

\bibitem{Hajek1998}
P.~H{\'a}jek.
\newblock {\em Metamathematics of Fuzzy Logic}, volume~4 of {\em Trends in
  Logic}.
\newblock Springer, Dordrecht, 1998.

\bibitem{Heymans2010}
H.~Heymans.
\newblock {\em Sheaves on Quantales as Generalized Metric Spaces}.
\newblock PhD thesis, Universiteit Antwerpen, Belgium, 2010.

\bibitem{Hofmann2014}
D.~Hofmann, G.~J. Seal, and W.~Tholen, editors.
\newblock {\em Monoidal Topology: A Categorical Approach to Order, Metric, and
  Topology}, volume 153 of {\em Encyclopedia of Mathematics and its
  Applications}.
\newblock Cambridge University Press, Cambridge, 2014.

\bibitem{Hohle1995a}
U.~H{\"o}hle.
\newblock Commutative, residuated l-monoids.
\newblock In U.~H\"{o}hle and E.~P. Klement, editors, {\em Non-classical logics
  and their applications to fuzzy subsets: a handbook of the mathematical
  foundations of fuzzy set theory}, volume~32 of {\em Theory and Decision
  Library}, pages 53--105. Springer, Berlin-Heidelberg, 1995.

\bibitem{Hohle2011}
U.~H{\"o}hle and T.~Kubiak.
\newblock A non-commutative and non-idempotent theory of quantale sets.
\newblock {\em Fuzzy Sets and Systems}, 166:1--43, 2011.

\bibitem{Hoehle1995}
U.~H{\"o}hle and A.~P. S{\v o}stak.
\newblock A general theory of fuzzy topological spaces.
\newblock {\em Fuzzy Sets and Systems}, 73(1):131--149, 1995.

\bibitem{Hohle1999}
U.~H{\"o}hle and A.~P. S{\v o}stak.
\newblock Axiomatic foundations of fixed-basis fuzzy topology.
\newblock In U.~H{\"o}hle and S.~E. Rodabaugh, editors, {\em Mathematics of
  Fuzzy Sets: Logic, Topology, and Measure Theory}, volume~3 of {\em The
  Handbooks of Fuzzy Sets Series}, pages 123--272. Springer, New York, 1999.

\bibitem{Kelly1986}
G.~M. Kelly.
\newblock A survey of totality for enriched and ordinary categories.
\newblock {\em Cahiers de Topologie et G\'{e}om\'{e}trie Diff\'{e}rentielle
  Cat\'{e}goriques}, 27(2):109--132, 1986.

\bibitem{Klement2000}
E.~P. Klement, R.~Mesiar, and E.~Pap.
\newblock {\em Triangular Norms}, volume~8 of {\em Trends in Logic}.
\newblock Springer, Dordrecht, 2000.

\bibitem{Lai2006}
H.~Lai and D.~Zhang.
\newblock Fuzzy preorder and fuzzy topology.
\newblock {\em Fuzzy Sets and Systems}, 157(14):1865--1885, 2006.

\bibitem{Lai2009}
H.~Lai and D.~Zhang.
\newblock Concept lattices of fuzzy contexts: Formal concept analysis vs. rough
  set theory.
\newblock {\em International Journal of Approximate Reasoning}, 50(5):695--707,
  2009.

\bibitem{Lawvere1973}
F.~W. Lawvere.
\newblock Metric spaces, generalized logic and closed categories.
\newblock {\em Rendiconti del Seminario Mat\'{e}matico e Fisico di Milano},
  XLIII:135--166, 1973.

\bibitem{Liu1998}
Y.~Liu and M.~Luo.
\newblock {\em Fuzzy Topology}, volume~9 of {\em Advances in Fuzzy Systems ---
  Applications and Theory}.
\newblock World Scientific, Singapore, 1998.

\bibitem{MacLane1998}
S.~Mac~Lane.
\newblock {\em Categories for the Working Mathematician}, volume~5 of {\em
  Graduate Texts in Mathematics}.
\newblock Springer, New York, second edition, 1998.

\bibitem{Mashhour1985}
A.~S. Mashhour and M.~H. Ghanim.
\newblock Fuzzy closure spaces.
\newblock {\em Journal of Mathematical Analysis and Applications},
  106(1):154--170, 1985.

\bibitem{Matthews1994}
S.~G. Matthews.
\newblock Partial metric topology.
\newblock {\em Annals of the New York Academy of Sciences}, 728(1):183--197,
  1994.

\bibitem{Pu2012}
Q.~Pu and D.~Zhang.
\newblock Preordered sets valued in a {GL}-monoid.
\newblock {\em Fuzzy Sets and Systems}, 187(1):1--32, 2012.

\bibitem{Pu2015}
Q.~Pu and D.~Zhang.
\newblock Categories enriched over a quantaloid: Algebras.
\newblock {\em Theory and Applications of Categories}, 30(21):751--774, 2015.

\bibitem{Rosenthal1990}
K.~I. Rosenthal.
\newblock {\em Quantales and their Applications}, volume 234 of {\em Pitman
  research notes in mathematics series}.
\newblock Longman, Harlow, 1990.

\bibitem{Rosenthal1996}
K.~I. Rosenthal.
\newblock {\em The Theory of Quantaloids}, volume 348 of {\em Pitman Research
  Notes in Mathematics Series}.
\newblock Longman, Harlow, 1996.

\bibitem{Shen2014}
L.~Shen.
\newblock {\em Adjunctions in Quantaloid-enriched Categories}.
\newblock PhD thesis, Sichuan University, Chengdu, 2014.

\bibitem{Shen2015a}
L.~Shen and W.~Tholen.
\newblock Limits and colimits of quantaloid-enriched categories and their
  distributors.
\newblock {\em arXiv:1504.03348}, 2015.

\bibitem{Shen2013a}
L.~Shen and D.~Zhang.
\newblock Categories enriched over a quantaloid: {Isbell} adjunctions and {Kan}
  adjunctions.
\newblock {\em Theory and Applications of Categories}, 28(20):577--615, 2013.

\bibitem{Shen2013}
L.~Shen and D.~Zhang.
\newblock The concept lattice functors.
\newblock {\em International Journal of Approximate Reasoning}, 54(1):166--183,
  2013.

\bibitem{Shen2013b}
L.~Shen and D.~Zhang.
\newblock Formal concept analysis on fuzzy sets.
\newblock In {\em Proceedings of the 2013 Joint IFSA World Congress and NAFIPS
  Annual Meeting (IFSA/NAFIPS)}, pages 215--219, 2013.

\bibitem{Street1978}
R.~Street and R.~F.~C. Walters.
\newblock Yoneda structures on 2-categories.
\newblock {\em Journal of Algebra}, 50(2):350--379, 1978.

\bibitem{Stubbe2005}
I.~Stubbe.
\newblock Categorical structures enriched in a quantaloid: categories,
  distributors and functors.
\newblock {\em Theory and Applications of Categories}, 14(1):1--45, 2005.

\bibitem{Stubbe2006}
I.~Stubbe.
\newblock Categorical structures enriched in a quantaloid: tensored and
  cotensored categories.
\newblock {\em Theory and Applications of Categories}, 16(14):283--306, 2006.

\bibitem{Stubbe2014}
I.~Stubbe.
\newblock An introduction to quantaloid-enriched categories.
\newblock {\em Fuzzy Sets and Systems}, 256:95--116, 2014.
\newblock Special Issue on Enriched Category Theory and Related Topics
  (Selected papers from the 33rd Linz Seminar on Fuzzy Set Theory, 2012).

\bibitem{Tao2014}
Y.~Tao, H.~Lai, and D.~Zhang.
\newblock Quantale-valued preorders: Globalization and cocompleteness.
\newblock {\em Fuzzy Sets and Systems}, 256(0):236--251, 2014.
\newblock Special Issue on Enriched Category Theory and Related Topics
  (Selected papers from the 33rd Linz Seminar on Fuzzy Set Theory, 2012).

\bibitem{Tholen1979}
W.~Tholen.
\newblock Semi-topological functors {I}.
\newblock {\em Journal of Pure and Applied Algebra}, 15(1):53--73, 1979.

\bibitem{Tholen1980}
W.~Tholen.
\newblock Note on total categories.
\newblock {\em Bulletin of the Australian Mathematical Society}, 21:169--173,
  1980.

\bibitem{Walters1981}
R.~F.~C. Walters.
\newblock Sheaves and {Cauchy}-complete categories.
\newblock {\em Cahiers de Topologie et G\'{e}om\'{e}trie Diff\'{e}rentielle
  Cat\'{e}goriques}, 22(3):283--286, 1981.

\bibitem{Zadeh1965}
L.~A. Zadeh.
\newblock Fuzzy sets.
\newblock {\em Information and Control}, 8(3):338--353, 1965.

\bibitem{Zhang2007}
D.~Zhang.
\newblock An enriched category approach to many valued topology.
\newblock {\em Fuzzy Sets and Systems}, 158(4):349--366, 2007.

\end{thebibliography}

\end{document}